\theoremstyle{plain}% default
\newtheorem{theorem}{Theorem}[section]
\newtheorem{definition}[theorem]{Definition}
\newtheorem{lemma}[theorem]{Lemma}
\newtheorem{corollary}[theorem]{Corollary}
\newtheorem*{theorem*}{Theorem}
\newtheorem*{corollary*}{Corollary}
\newtheorem{remark}[theorem]{Remark}
\newcommand{\eqplus}[1]{\hyperref[#1]{(\ref{#1}$+$)}}
\newcommand{\eqminus}[1]{\hyperref[#1]{(\ref{#1}$-$)}}
\newcommand{\R}{\mathbb{R}}
\newcommand{\N}{\mathbb{N}}
\DeclareMathOperator{\Graph}{Graph}
\DeclareMathOperator{\dis}{dist}
    \newcommand{\e}{\varepsilon}
    \title[Asymptotic Free Boundary Solutions]{On the asymptotic properties of solutions to one-phase free boundary problems}
\thanks{M.E. was partially supported by NSF DMS CAREER 2143719 and by a grant from the Simons Foundation (Grant Award ID BD-Targeted-00017375-ME). Z.Z. was partially supported by the NSF grant DMS-2350351.}
\author{Max Engelstein}
\address{127 Vincent Hall 206 Church St. SE Minneapolis, MN 55455, United States of America}
\email{ mengelst@umn.edu}
\author{Daniel Restrepo}
\address{Department of Mathematics, Johns Hopkins University, 3400 N. Charles Street, Baltimore, MD 21218, United States of America}
\email{drestre1@jhu.edu}
\author{Zihui Zhao}
\address{Department of Mathematics, Johns Hopkins University, 3400 N. Charles Street, Baltimore, MD 21218, United States of America}
\email{zhaozh@jhu.edu}
\begin{document}

    \begin{abstract}
    {\rm In this article we study the structure of solutions to the one-phase Bernoulli problem that are modeled either infinitesimally or at infinity by one-homogeneous solutions with an isolated singularity. In particular, we prove a uniqueness of blowups result under a natural symmetry condition on the one-homogeneous solution (\`a la Allard-Almgren \cite{AllardAlmgren}) and we prove a rigidity result at infinity (\`a la Simon-Solomon \cite{SimonSolomon}) under additional constraints on the linearized operator around the one-homogeneous solution (which are satisfied by the only known examples of minimizing one-homogeneous solutions). We believe these are the first uniqueness of blow-up/blow-down results at singular points for non-minimizing solutions to the one-phase problem. }
    \end{abstract}

    \maketitle

    \setcounter{tocdepth}{1}

    \tableofcontents
\section{Introduction}\label{s:intro}
In this paper we prove both a uniqueness of blowups result and a rigidity of blowdowns result for solutions to the one phase Bernoulli problem that are modeled on cones with isolated singularities and additional symmetries. Recall that the one-phase Bernoulli problem is a prototypical free boundary problem, given by the overdetermined equation
\begin{equation}\label{e:bernoulliweak}
	\left\{ \begin{array}{ll}
		\Delta u = 0, & \text{ in } \{u > 0\} \\
    |\nabla u| = 1, & \text{ on } \partial \{u > 0\}. 
	\end{array}
    \right.
\end{equation}
First studied rigorously by Alt and Caffarelli in \cite{AltCaf}, this problem has received a huge amount of attention over the decades due, among other reasons, to its relationship with global solutions of semi-linear PDE \cite{CafSalsa}, \cite{AuSe}, its non-convex nature \cite{Generic}, and its connections to eigenvalue optimization and the behavior of harmonic measures \cite{KenigToro,KL,MTV}. 

It was first shown by Weiss \cite{Weiss}, that solutions to \eqref{e:bernoulliweak} (suitably interpreted, see Definition \ref{d:weaksol}), are modeled both infinitesimally and at infinity by one-homogeneous solutions to the same equation. More precisely, if $x_0 \in \partial \{u > 0\}$ and $r_j \downarrow 0$ (or $r_j \uparrow \infty$) then up to a subsequence $u(r_jx+x_0)/r_j \rightarrow U$, where $U$ is a one-homogeneous solution to \eqref{e:bernoulliweak}. {\it A priori}, one may get different limits $U$ along different sequences $\{r_j\}$. Less pathologically, $U$ can be independent of the sequence, but the convergence can be very slow (see \cite{White}, \cite{Hirsch} or \cite{CSV, FigalliSerra} for examples of these phenomena in related non-linear PDEs). 

If $U = (x_d)_+$, then the above convergence is well understood; the limit does not depend on the sequence $r_j$ and the convergence happens at a power rate. However, there exist non-flat homogeneous solutions to \eqref{e:bernoulliweak} and when $U$ is one of these solutions, the convergence is much less well understood and essentially nothing is known if $u$ is not assumed to be a minimizer to the Alt-Caffarelli functional (see more discussion after the theorem statements). 

Our two main results completely describe the convergence when $\partial \{U > 0\}$ is smooth away from the origin and has some additional symmetry, which we call integrability through rotations, after the analogous property for minimal surfaces (see Definition \ref{def: intcone}).  The first result says that as long as $U$ satisfies these condition then the limit is independent of the sequence $\{r_j\}$ and occurs with a H\"older rate. 

\begin{theorem}\label{thm AA}
   Let $u: D \subset \R^d\to \R_+$ be a non-degenerate weak solution to the one phase Bernoulli problem (in the sense of Definition \ref{d:weaksol}).  Suppose $0 \in \partial \{u > 0\}$ and assume that there exist $r_j \downarrow 0$ such that $u(r_j x)/r_j \rightarrow U$ in $L^2_{\mathrm{loc}}$. 

   Assume that $U$ is integrable through rotations (see Definition \ref{def: intcone}) and $\partial \{U > 0\}\backslash \{0\}$ is smooth. There exist $\delta_0>0$, $\alpha \in (0,1)$ and $C > 0$ (depending on $U$) such that if {$\|u - U\|_{L^2(B_8\backslash B_{1/8})} \leq \delta_0$}, then 
   $$\|u(rx)/r - U(x)\|_{L^2(B_2\backslash B_{1/2})} \leq Cr^\alpha {\|u - U\|_{L^2(B_2 \setminus B_{1/2} ) }}, $$ for all $ r < 1$.

Similarly, with the same assumptions on $U$ if $u(r_jx)/r_j \rightarrow U$ in $L^2_{\mathrm{loc}}$ as $r_j \uparrow \infty$ and $\|u - U\|_{L^2(B_8\backslash B_{1/8})} \leq \delta_0$, then 
\begin{equation}\label{eq:rate}
	\|u(rx)/r - U(x)\|_{L^2(B_{2}\backslash B_{1/2})} \leq Cr^{-\alpha} \|u - U \|_{L^2(B_2 \setminus B_{1/2} ) }, 
\end{equation} 
for all $ r > 1$.
\end{theorem}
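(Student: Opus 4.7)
The plan is to follow the Allard--Almgren strategy \cite{AllardAlmgren} adapted to the one-phase Bernoulli setting, with Weiss' monotonicity formula playing the role of Almgren's frequency and with the integrability-through-rotations hypothesis used as it is for minimal cones. The argument divides into three parts: (i) an $\e$-regularity step that upgrades $L^2$-closeness to $C^{1,\alpha}$-closeness of the free boundaries on a slightly smaller annulus; (ii) a decay/contraction estimate across dyadic scales in which the best rotation of $U$ is subtracted off at each step, proved by compactness and contradiction; and (iii) iteration, which produces a Cauchy sequence of rotations whose limit is the unique blow-up, with H\"older rate.

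For step (i), I would invoke the Alt--Caffarelli regularity \cite{AltCaf} together with the non-degeneracy from Definition \ref{d:weaksol} to realize $\partial\{u>0\}\cap(B_4\setminus B_{1/4})$ as a small $C^{1,\alpha}$ normal graph over $\partial\{U>0\}\setminus\{0\}$, so that $\varphi := u - U$ satisfies a linearized one-phase problem: harmonic in $\{U>0\}$, with a Robin-type condition on $\partial\{U>0\}$ coming from the linearization of $|\nabla u|=1$, up to errors quadratic in $\varphi$ and its gradient. For step (ii), I would establish that there exist $\theta \in (0,1)$ and $\delta_0 > 0$ such that, whenever $\|u - U\|_{L^2(B_8\setminus B_{1/8})} \le \delta_0$, there is a rotation $R$ with $\|R - \Id\|$ controlled by the $L^2$ error and
\[
\bigl\| 2\,u(x/2) - U \circ R(x)\bigr\|_{L^2(B_8\setminus B_{1/8})} \le \theta\,\|u - U\|_{L^2(B_8\setminus B_{1/8})}.
\]
The proof is by contradiction: a failing sequence, after normalizing $\varphi_k = (u_k - U\circ R_k)/\|u_k - U\circ R_k\|_{L^2}$, would converge to a nontrivial $1$-homogeneous solution of the linearized problem $L^2$-orthogonal to the rotation orbit of $U$, which integrability-through-rotations rules out. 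Step (iii) is then standard: iterating at scales $r_k = 2^{-k}$ yields a composition of rotations that forms a Cauchy sequence (the per-step angular correction is controlled by the geometrically decaying error), producing a limit $R_\infty$ which must be a symmetry of $U$ by the hypothesis $u(r_jx)/r_j \to U$, and giving the desired H\"older rate with $\alpha = -\log_2 \theta$. The blow-down statement follows by running the same argument at large dyadic scales, using Weiss monotonicity at infinity.

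I expect the main obstacle to be the contraction estimate in step (ii). Two issues require care. First, the singular point at the origin, where neither $u$ nor $U$ is smooth, must be handled by working on annuli bounded away from $0$ and using non-degeneracy to prevent mass from escaping to the vertex under the blow-up in the contradiction argument. Second, and more substantially, one must verify that the integrability hypothesis of Definition \ref{def: intcone}, formulated on the smooth part of the cone, translates into the correct spectral statement at the annulus level: every $1$-homogeneous kernel element of the linearized Dirichlet/Robin problem is the Jacobi field of a one-parameter family of rotations of $U$. This amounts to a separation-of-variables analysis on the link of the cone together with the Robin boundary condition, and to ruling out extra zero modes supported near the singularity. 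Once these ingredients are in place, the rest of the argument parallels \cite{AllardAlmgren} and \cite{SimonSolomon}.
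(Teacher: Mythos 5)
Your step (ii), as stated, has a genuine gap that the rest of the argument cannot absorb. In the contradiction argument you normalize $\varphi_k=(u_k-U\circ R_k)/\|u_k-U\circ R_k\|_{L^2}$ and assert that the limit is a \emph{one-homogeneous} solution of the linearized problem orthogonal to the rotation orbit. The orthogonality follows from the optimal choice of $R_k$, but the one-homogeneity does not follow from anything in your hypotheses: the contradiction hypothesis only constrains the $L^2$ norm of $\varphi_k$, so the limit $\varphi$ is merely a bounded solution of \eqref{eq:lineareq} on the annulus, and a priori it decomposes as $\sum_j a_j r^{\gamma_j}\phi_j(\omega)$ with all homogeneities $\gamma_j$ permitted. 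A nonzero component with $\gamma_j<1$ makes $2\varphi(\cdot/2)$ larger than $\varphi$ in $L^2$, so a pure $L^2$ contraction of the form $\|2u(\cdot/2)-U\circ R\|\le\theta\|u-U\|$ is simply not a consequence of integrability; integrability only classifies the $\gamma=1$ modes. This is exactly why the paper does not attempt an $L^2$-contraction at all. Instead, Lemma \ref{lemma cone approx} bounds $\inf_{A}\|u-U(A\cdot)\|_{L^2(A_{2r})}^2$ by the Weiss energy drop $W(u,4r)-W(u,r/4)$, and the contradiction hypothesis \eqref{eq contrradial} is formulated precisely in this form; then \eqref{eq conradial2} shows that the Weiss drop being $o(d_k^2)$ forces $\int (x\cdot\nabla\varphi-\varphi)^2=0$, which is what makes the blow-up limit one-homogeneous. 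You mention Weiss monotonicity as playing the role of the frequency, but it never actually enters your step (ii), and without it the key compactness step does not close.

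The second missing ingredient is the converse estimate that turns the Weiss decay into an $L^2$ rate. Even granting a version of your step (ii) with the correct hypothesis, iterating the Weiss drop requires knowing that the Weiss energy deficit $W(u,\infty)-W(u,r)$ is itself controlled by $\|u-U\|^2_{L^2}$ on a comparable annulus; this is the ``Simon trick,'' Lemma \ref{l:simontrick} in the paper. Combining Lemma \ref{lemma cone approx} and Lemma \ref{l:simontrick} yields the dichotomy $W(u,\infty)-W(u,4r)\le\tau\big(W(u,\infty)-W(u,r/4)\big)$ with $\tau<1$, from which the H\"older rate follows, and the stitching Lemma \ref{l:stitching} is needed to track the composition of near-optimal rotations across scales. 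Your proposal has no analogue of Lemma \ref{l:simontrick} and relies on a direct $L^2$ contraction instead; both are needed in the correct argument. (Your step (i) is essentially right in spirit, although the paper also has to do real work to justify the linearization rigorously---building the analytic extension $\tilde U$, $\tilde u$ and proving the $L^2\to C^2$ estimate in Lemma \ref{lemmaL2Linfty}---because $u$ and $U$ are solutions on different domains and there is no signedness assumption; that is where a large part of the technical effort lives.)
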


%We first mention that there is a version of Theorem \ref{thm AA} when $r_j \uparrow \infty$ (see Theorem \ref{thm AArestated} for the full statement). NOTE SOMEWHERE THAT THE CONE DOES NOT HAVE TO BE MINIMAL OR STABLE

 Theorem \ref{thm AA} is reminiscent of an analogous result in minimal surfaces due to Allard and Almgren, \cite{AllardAlmgren}. At a high level the general strategy of the proof is inspired by \cite{AllardAlmgren} (and other regularity results for minimal surfaces such as \cite{Simon}, \cite{MaggiNovack, EdelenSpolaor}), however, the free boundary setting requires many new ideas and brings up challenges that are not present when analyzing minimal surfaces. We will discuss this in more detail in Section \ref{ss: comparison}.

 As mentioned above, we believe that Theorem \ref{thm AA} is the first known uniqueness of blowup result for singular points in the free boundary of solutions of \eqref{e:bernoulliweak}. However, there have been uniqueness results for the free boundaries of \emph{minimizers} of the Alt-Caffarelli functional, 
 \begin{equation}\label{eq:functional}
     J_{D}(u) := \int_{D} |\nabla u|^2\, dx + |\{u > 0\}\cap D|.
 \end{equation}
 Minimizers of the above functional satisfy (in a viscosity or distributional sense) \eqref{e:bernoulliweak}, but there are many solutions to \eqref{e:bernoulliweak} which do not minimize the functional. In \cite{EnSVEpi}, the first named author, along with Spolaor and Velichkov, proved a version of Theorem \ref{thm AA} for minimizing $u$, and also treated the non-integrable case (see also \cite{EdSVLogEpi}). The key tool in \cite{EnSVEpi} is an ``epiperimetric inequality" which quantifies the behavior of the energy $J$ near one-homogeneous solutions. However, epiperimetric inequalities can only imply regularity results like Theorem \ref{thm AA} for minimizing solutions.  As such, by necessity the methods of this paper are almost entirely disjoint from those of \cite{EnSVEpi}. Furthermore the main results of these papers do not imply one another; for integrable cones, our Theorem \ref{thm AA} is more general but \cite{EnSVEpi} are able to treat non-integrable cones which we do not discuss here.

One can ask about the assumption of  integrability in Theorem \ref{thm AA}.  To show this is sharp we would have to produce an example of a singular solution which has a non-unique non-integrable blowup (or slow convergence to that blowup). Unfortunately, there are currently no known examples of singular solutions to the Bernoulli problem which are not a cone themselves, so we can only discuss the sharpness of the hypothesis in analogy with other non-linear PDE.  In the setting of minimal surfaces, the assumption that $U$ is integrable is not needed to show uniqueness of the blow-up (see, \cite{Simon}) but without this additional symmetry the convergence can be slower than $r^\epsilon$ for any $\epsilon > 0$  (see \cite{AdamsSimon}). Indeed, in \cite{EnSVEpi}, a very closely related hypothesis of integrability was needed in order to prove $r^\alpha$-convergence. Fortunately, we are able to show that axially symmetric cones are integrable through rotations (these include the only known minimizing cones).\\% In future work we hope to show that a large class of symmetric cones discovered by HONG CITE are also integrable through rotations. 

%When the singularity of $\partial \{U > 0\}$ is not isolated we expect this to be a substantially more difficult problem. In the setting of minimal surfaces uniqueness of the blowup was shown for a large class of cones cross $\mathbb R$ by Simon \cite{SimonCylinders} and for the cone over $\mathbb S^3\times \mathbb S^3$ cross $\mathbb R$ by Sz\'ekelyhidi \cite{Gabor}. The main technical issues are the lack of smooth parameterization away from the origin and the fact that even if the cone $\mathbf C$ is integrable, the cone $\mathbf C\times \mathbb R$ may not be. 

Our second result gives a characterization for global solutions of \eqref{e:bernoulliweak} which are modeled at infinity by one-homogeneous solutions with isolated singularity at the origin and additional symmetries. To state the theorem we recall a recent result of De Silva-Jerison-Shahgholian, which guarantees the existence of a smooth foliation around one-homogeneous minimizers with isolated singularities:

   \begin{theorem}[Theorem 1 of \cite{DSJS}]\label{thm foliation}
Let $U$ be a homogeneous global minimizer of the Alt-Caffarelli energy. There exist a unique pair $\overline{U},\underline{U}$ of global minimizers with analytic free boundary satisfying
\begin{enumerate}
    \item 
    $\underline{U}\le U\le\overline{U}$ and 
    $\operatorname{dist}\!\bigl(F(\overline{U}),\{0\}\bigr)
      =\operatorname{dist}\!\bigl(F(\underline{U}),\{0\}\bigr)=1$;

    \item 
     the graphs of $\overline{U}_{t} = \frac{\overline{U}(t\cdot)}{t}$ and $\underline{U}_{t} = \frac{\underline{U}(t\cdot)}{t}$ define a foliation of the half-space
    $\mathbb{R}^{d}\times[0,\infty)$ for $t\geq 0$.
\end{enumerate}
\end{theorem}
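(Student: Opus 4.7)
The plan is to construct $\overline{U}$ as the subsequential limit of an appropriate constrained minimization on expanding balls, construct $\underline{U}$ by the dual construction, deduce analyticity of the free boundary from strict separation from $U$, and finally promote the scaling families $\{\overline{U}_t\}$ and $\{\underline{U}_t\}$ to a foliation by a comparison argument.

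\emph{Construction.} For $R \gg 1$, I would let $v_R$ solve an appropriate one-sided constrained minimization problem for $J_{B_R}$ with $v_R = U$ on $\partial B_R$---for concreteness, minimize among $v$ satisfying $\overline{B}_\rho \subset \{v > 0\}$ for a fixed small $\rho > 0$, which forces $v_R \not\equiv U$ (since the one-homogeneous cone $U$ vanishes on its boundary rays inside $B_\rho$). Standard comparison arguments for minimizers of the Alt--Caffarelli functional give $v_R \geq U$, and uniform Lipschitz and non-degeneracy bounds permit extraction of a subsequential local limit $v_\infty$ as $R \to \infty$. Rescaling $v_\infty$ so that $\operatorname{dist}(F(v_\infty),\{0\}) = 1$ defines $\overline{U}$; the symmetric problem (with $\{v > 0\} \subset \{U > 0\}$ in $B_R$) produces $\underline{U}$.

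\emph{Separation and regularity.} The nonnegative function $\overline{U} - U$ is harmonic on the interior of $\{U > 0\}$, so the strong maximum principle rules out interior coincidence. At smooth points of $\partial\{U > 0\} \setminus \{0\}$, a Hopf--Oleinik argument applied to $\overline{U} - U$ combined with the overdetermined condition $|\nabla \overline{U}| = |\nabla U| = 1$ forbids tangential contact of the two free boundaries. Hence $\overline{U} > U$ off the origin and $F(\overline{U})$ stays at positive distance from $\{0\}$. A one-homogeneous blow-up of $\overline{U}$ at a would-be singular point of $F(\overline{U})$ would contradict the separation from $U$, so $F(\overline{U})$ has no singular points; Alt--Caffarelli $C^{1,\alpha}$ regularity followed by the Kinderlehrer--Nirenberg bootstrap then yields analyticity.

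\emph{Foliation and uniqueness.} By scale invariance, each $\overline{U}_t(x) := \overline{U}(tx)/t$ is a minimizer with $\operatorname{dist}(F(\overline{U}_t), \{0\}) = 1/t$. Applying the separation argument to a pair $\overline{U}_{t_1}, \overline{U}_{t_2}$ with $t_1 < t_2$ shows $\overline{U}_{t_1} > \overline{U}_{t_2}$ on their common positivity set, so distinct $\overline{U}_t$ have disjoint graphs. As $t \to \infty$, $\overline{U}_t \to U$ locally uniformly by the blow-down property; as $t \to 0^+$ the graphs escape upward since $\overline{U}(0) > 0$. Thus $\{\overline{U}_t\}_{t > 0}$ sweeps out the region strictly above the graph of $U$ inside $\R^d \times [0,\infty)$, $\{\underline{U}_t\}_{t > 0}$ sweeps out the complementary region, and together with the graph of $U$ (as the common limit $t \to \infty$) one obtains the desired foliation. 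Uniqueness of the normalized pair follows because any candidate must coincide with a leaf, pinned to $t = 1$ by the distance normalization. The hard part will be the Hopf-type separation at smooth points of $\partial\{U > 0\} \setminus \{0\}$: reconciling the overdetermined condition $|\nabla \cdot| = 1$ with the boundary behavior of the superharmonic difference $\overline{U} - U$ is the most delicate ingredient, and relies crucially on smoothness of the free boundary away from the origin.
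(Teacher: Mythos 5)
This statement is not proven in the paper: it is reproduced verbatim as Theorem~1 of De~Silva--Jerison--Shahgholian \cite{DSJS}, so there is no in-paper proof to compare with. Your sketch follows the Hardt--Simon-type strategy on which \cite{DSJS} is modeled, and the skeleton (constrained minimization in $B_R$, one-sided comparison, Hopf separation, regularity, scaling family) is the right one, but two of the claims you treat as routine are in fact where essentially all the difficulty lies.

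The regularity step is wrong as written. Once you have $\overline{U}>U$ off the origin, $F(\overline{U})$ sits in the interior of $\{U=0\}$, at positive distance from $F(U)$ on every compact set; so a blow-up of $\overline{U}$ at a would-be singular point $p\in F(\overline{U})$ never interacts with $U$ at all---it just produces some singular one-homogeneous minimizing cone, and ``separation from $U$'' gives no local contradiction. Excluding such interior singularities in dimensions $d\geq 7$ (where singular minimizing cones exist) is precisely the hard part. Note that the present paper, in Step~2 of the proof of Theorem~\ref{thm main}, has to treat an analogous singular-touching scenario for $u$ against a leaf $\overline{U}_t$ by a blow-up and first-eigenvalue argument that crucially uses that the leaf's free boundary is \emph{already} analytic; that argument cannot be recycled to establish the analyticity in the first place.

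There are also smaller gaps. With only the constraint $\overline{B}_\rho\subset\{v>0\}$, minimality does not automatically give $v_R\geq U$; one replaces $v_R$ by $\max(v_R,U)$ and uses the cut-and-paste identity $J(\min(v_R,U))+J(\max(v_R,U))=J(v_R)+J(U)$ to check the replacement is still a constrained minimizer. You also need a uniform excess bound $J_{B_R}(v_R)-J_{B_R}(U)\leq C$ independent of $R$ to prevent $F(v_R)$ from drifting to infinity as $R\to\infty$, else the normalization $\operatorname{dist}(F(v_\infty),0)=1$ may fail. Finally, disjointness of the leaves $\overline{U}_{t_1},\overline{U}_{t_2}$ is a separate sliding/maximum-principle argument between two solutions asymptotic to the same cone---again obstructed by possible singular touching---and is not an application of ``the separation argument'' for $\overline{U}$ vs.\ $U$.
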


We can now state our rigidity result for blow-downs:

\begin{theorem}\label{thm main}
    Let $u:\R^d\to \R_+$ be a non-degenerate weak solution to the one phase Bernoulli problem (see Definition \ref{d:weaksol}), and assume there exist $x_0 \in \R^d$ and a sequence $R_j \uparrow \infty$ such that $u(R_jx+x_0)/R_j \rightarrow U$ uniformly on compacta as $R_j \uparrow \infty$. Further assume that $U$ is a minimizing homogeneous solution of the one-phase Bernoulli problem such that $\partial \{U > 0\}\backslash \{0\}$ is smooth and $U$ is strongly integrable (see Definition \ref{def: sintegrable}). 
    
    Then $u$ coincides with (some translation of) $U$, $\overline{U}_t$, or $\underline{U}_t$ for some $t>0$ (here $\overline{U}_t$ and $\underline{U}_t$ are the leaves of the foliation guaranteed to exist by Theorem \ref{thm foliation}). 
    
    In particular, the above result holds when $U$ is a minimizing axially-symmetric cone, which are referred to as ``De Silva-Jerison'' cones in the literature and are minimizing in dimensions $d\geq 7$.
\end{theorem}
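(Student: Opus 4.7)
The plan is to follow the Simon-Solomon rigidity strategy in the spirit of \cite{SimonSolomon}, adapted to the free-boundary setting by using the foliation $\{\overline{U}_t,\underline{U}_t\}_{t>0}$ of Theorem \ref{thm foliation} (together with its translates) as a family of sliding barriers. After translating so that $x_0=0$, the first input is the blow-down case of Theorem \ref{thm AA}, which applies because $U$ is strongly integrable and in particular integrable through rotations. This gives the H\"older rate
\[
\|u(Rx)/R-U(x)\|_{L^2(B_2\setminus B_{1/2})}\leq C R^{-\alpha}
\]
for $R>1$, and hence, after interior regularity estimates away from the isolated singularity of $U$, a pointwise bound $|u(y)-U(y)|\leq C|y|^{1-\alpha}$ at infinity. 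The same reasoning applied to the minimizers $\overline{U}$ and $\underline{U}$ gives analogous control on the leaves, in particular
\[
|\overline{U}_t(x)-U(x)|,\ |\underline{U}_t(x)-U(x)|\leq C t^{-\alpha}|x|^{1-\alpha}
\]
at infinity, with a specific nonzero leading-order profile given by the Jacobi field on the link of $U$ associated to the foliation direction.

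Next comes the sliding/comparison argument. I would define
\[
T:=\sup\{t>0:\ u\leq \overline{U}_t\text{ on }\R^d\}\quad\text{and}\quad S:=\sup\{s>0:\ u\geq \underline{U}_s\text{ on }\R^d\},
\]
each in $(0,\infty]$. Both sets are nonempty because for $t$ small the leaf $\overline{U}_t$ blows up like $1/t$ on any fixed compact set (since the origin lies strictly inside $\{\overline{U}>0\}$), while at infinity the leading-order asymptotic coefficient of $u-U$ can be matched and then dominated by that of $\overline{U}_t-U$ for a suitable choice of $t$, where strong integrability is crucial (see the last paragraph). Three cases then arise. (a) If $T<\infty$, by continuity $u\leq \overline{U}_T$ with contact somewhere in $\R^d$; combining the strong maximum principle in the common positivity set with a Hopf-type lemma at the free boundary (applicable because $\overline{U}_T$ has analytic free boundary at positive distance from the origin, so the contact point is regular) forces $u\equiv\overline{U}_T$. (b) Symmetrically, if $S<\infty$, then $u\equiv\underline{U}_S$. (c) If $T=S=\infty$, then $u\leq\overline{U}_t$ and $u\geq\underline{U}_s$ for all $t,s$; sending $t,s\to\infty$ and using $\overline{U}_t\downarrow U$ and $\underline{U}_s\uparrow U$ yields $U\leq u\leq U$, hence $u\equiv U$. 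Running the entire argument simultaneously over all translates of the leaves accounts for the ``some translation'' clause in the statement.

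The main obstacle is the initial separation step making $T$ and $S$ well-defined, because $u-U$ and $\overline{U}_t-U$ carry the \emph{same} polynomial decay rate $|x|^{1-\alpha}$ at infinity, so crude size comparisons are insufficient. This is precisely where strong integrability (Definition \ref{def: sintegrable}) is decisive: it pins the leading-order asymptotic mode of any solution close to $U$ into the finite-dimensional space of Jacobi fields spanned by infinitesimal translations, dilations, and the foliation direction. Matching the leading-order coefficient of $u-U$ against that of $\overline{U}_t-U$ via a suitable choice of $t$ (and translation) yields strictly faster decay for the remainder $\overline{U}_t-u$ at infinity, which can then be closed up by coarser estimates to enforce the global inequality $u\leq\overline{U}_t$. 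A secondary but nontrivial technical point is the Hopf-type argument at the free boundary in case (a); this relies on the analyticity of the leaves' free boundary and on standard comparison machinery for viscosity solutions of \eqref{e:bernoulliweak}.
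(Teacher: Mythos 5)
Your proposal captures the broad Simon--Solomon strategy that the paper also follows: rate of convergence via Theorem \ref{thm AA}, classification of the leading asymptotic mode of $u-U$ via strong integrability and a particular-solution construction, and a sliding argument against the foliation $\{\overline{U}_t,\underline{U}_t\}$ together with the maximum principle. However, as written, the core step --- that $T := \sup\{t>0: u\leq\overline{U}_t\}$ and $S := \sup\{s>0: u\geq\underline{U}_s\}$ both lie in $(0,\infty]$ --- fails, and the reason is exactly the translation mode you did not remove.

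By strong integrability the leading mode of $v=u-U$ lies in one of four cases: a translation mode $\phi_2 = c\,\partial_{e_i}U$ (decay like $|x|^0$), the foliation mode $\phi_1\,r^{-\frac{d-2}{2}+\delta_1}$, the fast foliation mode $\phi_1\,r^{-\frac{d-2}{2}-\delta_1}$, or faster than $|x|^{-(d-2)}$. Since $\lambda_1<0$ by \eqref{eq quadratic form2}, one has $\delta_1<\frac{d-2}{2}$, so the leaf displacements $\overline{U}_t-U$ and $U-\underline{U}_s$ all tend to zero at infinity. In contrast, if $v$ carries a nonzero translation mode, then $v$ is $O(1)$ at infinity and, because $\partial_{e_i}U$ changes sign (it is a $0$-homogeneous solution of \eqref{eq:lineareq}, and if it were one-signed it would force $\lambda_1=0$, contradicting strict negativity of $\lambda_1$), $v$ is bounded away from zero with both signs on large sets near infinity. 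In that regime \emph{no} $t>0$ satisfies $u\leq\overline{U}_t$ and no $s>0$ satisfies $u\geq\underline{U}_s$: the sets defining $T$ and $S$ are empty. The phrase ``the leading-order asymptotic coefficient of $u-U$ can be matched and then dominated by that of $\overline{U}_t-U$'' is simply false when the translation mode is present. The paper's fix (its Step 4) is to first compute the translation mode from the asymptotic expansion, translate $u$ by the correct vector $ce_i$ so that the leading mode drops to one of the remaining three cases, and only then run the sliding argument on the translated function. Your closing remark about ``running the argument over all translates of the leaves'' gestures at this, but without a mechanism for selecting the correct translate, and the sliding with untranslated $u$ simply does not start.

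Two further points your proposal glosses over. First, the contact point in the sliding argument may be a singular free boundary point of $u$, where Hopf's lemma is unavailable; the paper handles this by a blowup at the contact point and a Faber--Krahn/Dirichlet-eigenvalue comparison that rules out a one-homogeneous blowup strictly below a half-plane. Second, the strong maximum principle yields $u\equiv\overline{U}_T$ only on the connected component of $\{u>0\}$ containing the contact; to globalize one needs the connectedness of $\{u>0\}$, which the paper establishes as a separate lemma (Lemma \ref{lemma connectedness}) using the graphicality of $\partial\{u>0\}$ over $\partial\{U>0\}$ near infinity and a Jordan-type separation argument.
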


We recall that in \cite[Theorem 1.1-(vi)]{DSJS}, the authors showed that global minimizer of $J$ \emph{lying on one side of} $U$ 
%This result improves, in the case of strongly integrable cones, the rigidity result \cite[Theorem 1.1-(vi)]{DSJS} which showed that global non-degenerate weak solutions lying on one side of $U$ 
must coincide with a leaf of the foliation or $U$ itself. Theorem \ref{thm main} improves that result, in the case of strongly integrable cones, by removing the a priori assumptions of minimality and one-sidedness.
It is natural to ask if there are any one-homogeneous solutions which are (or are not) strongly integrable. Unfortunately, very few minimizing cones for the one-phase problem have been identified. In each $d\geq 7$, the De Silva-Jerison cone is the only known example of non-flat minimizing one-homogeneous solutions to \eqref{e:bernoulliweak}, see \cite{DSJ}. We remind the reader that it is an open question as to whether there are any minimizing one-homogeneous solutions in $\R^d$ with $d \in \{5,6\}$ and that all one-homogeneous minimizers are smooth when $d \leq 4$, \cite{CJK, JerisonSavin}. We show in Section \ref{ss:DSJ} that these De Silva-Jerison cones are strongly integrable. 
%The upshot is that while \cite{DSJS} is {\it a priori} more general, there are no known examples which are treated by \cite{DSJS} and not by Theorem \ref{thm main} (though of course our proof of Theorem \ref{thm main} relies on \cite[Theorem 1.1]{DSJS}). 

Asymptotic rigidity or Liouville theorems play an important role in minimal surface theory and other non-linear geometric PDE (see the introduction of  \cite{EdGabor} for a nice discussion). Unfortunately, there are very few such theorems available in the context of the Bernoulli problem. Aside from our Theorem \ref{thm main}, and the aforementioned works \cite{DSJS, EdSVLogEpi} (see also \cite{EdSVMaxPrinc}), we are only aware of relevant results on planar domains, see \cite{KLT, T, JK}. We also mention recent developments on the Bernstein problem in the free boundary setting, see \cite{Bernstein, GraphBernstein}.
%we are aware of no other rigidity results for solutions to the Bernoulli problem that are asymptotic to or lay on one side of one-homogeneous solutions. 

Theorem \ref{thm main} is inspired by a corresponding result due to Simon-Solomon \cite{SimonSolomon}, which characterizes complete minimal surfaces asymptotic to quadratic cones which are minimizing. The assumption of quadratic cones is because the spectrum of the linearized minimal surface equation around these cones are fully classified and in particular, the bottom spectrum is generated by geometric rigidities. For free boundary problems, we summarize the sufficient conditions of the linear spectrum in the notion of strong integrability in Definition \ref{def: sintegrable}.
We mention that the more recent work of Edelen-Spolaor \cite{EdelenSpolaor} %provides a local version of \cite{SimonSolomon}  
%pointed out that Solomon-Simon did not use that the cones were quadratic so much as they used certain aspects of the spectrum of the minimal surface equation linearized around quadratic cones. This led them to introduce the notion of strongly-integrability for minimal surfaces, and we have introduce the analogous notion for the free boundary problem in Definition \ref{def: sintegrable}. We should mention that \cite{EdelenSpolaor} 
generalizes \cite{SimonSolomon}, providing a local version of sorts for a convergent sequence of minimal surfaces (not necessarily a blow-up sequence), and that we hope to prove an analogous result for free boundary problems in the future. As with Theorem \ref{thm AA}, at a high level our proof of Theorem \ref{thm main} follows the strategy laid out by \cite{SimonSolomon} but the execution of this strategy must be different in the free boundary setting, which we highlight in Section \ref{ss: comparison}.
%. Again we will discuss these differences in more detail below.  

As before, it is natural to ask whether the assumption of strong integrability is necessary in Theorem \ref{thm main}. 
In the context of minimal surfaces, complete minimal surfaces asymptotic to minimizing non-quadratic cones have been constructed in \cite{WhiteCones, Chan}, see also the discussion in \cite[Section 4]{SimonSolomon}. In particular, \cite{Chan} showed that complete minimal surfaces asymptotic to a given minimizing cone should be at least as rich as the Jacobi fields corresponding to the bottom spectrum of the linearized operator.
Thus it is natural to expect a rich family of global solutions to the free boundary problem, when the regular cone in question is not assumed to be strongly integrable.
%an example of White \cite{WhiteCones} shows that some extra symmetry assumption on the one-homogeneous minimizer is necessary for such a rigidity result to hold (see also Chan \cite{Chan}).
However, as before, this discussion is stymied by the lack of examples of minimizing cones.  %\textcolor{red}{Max: is the below unnecessary?} { \textcolor{blue}{Daniel: Probably we should just say that the study of cones with a non-trivial spine are expected to require other kind of analysis and are out of the scope of our work and just point out a couple of references of the MS literature, without speculating too much about this hypothesis.} Similarly, we expect the assumption that the cone have an isolated singularity to be necessary. In the context of minimal surfaces, Bombieri, De Giorgi and Giusti \cite{BDGG}, constructed a minimizing graph which is asymptotic at infinity to the Simons cone crossed with $\mathbb R$. Even though the Simons cone is a quadratic cone, this  minimizing graph does not satisfy the rigidity of \cite{SimonSolomon}. Similar constructions are not available in the setting of the Bernoulli problem but are expected by experts to exist (see the interesting paper \cite{DeSilvaGraph}). 

%Thankfully, as mentioned above, we have shown that all known minimizing cones with isolated singularity are strongly integrable. 
Furthermore, using the tools developed here, we expect to be able to analyze the linear spectrum of any cone in the large class of symmetric one-homogeneous solutions constructed in \cite{Hong}. We plan on doing this in upcoming work.
%We expect to show that our Theorem \ref{thm AA} applies to all of these cones and that our Theorem \ref{thm main} would also apply to any of these cones should it be minimizing. 
%In an upcoming work we will carry out the analysis and check the validities of Theorems \ref{thm AA} and \ref{thm main} (should these cones be minimizing).
%We plan to do these linearizations in upcoming work.

\subsection{Sketch of the proofs and Comparison with minimal surfaces}\label{ss: comparison}
Let us sketch the proof of Theorem \ref{thm AA}; our main tool is the monotone quantity of Weiss' boundary adjusted energy \cite{Weiss}. In the key Lemma \ref{lemma cone approx}, we show that on an annulus the change of this monotone quantity controls the square of the $L^2$-distance between $u$ and a rotation of the integrable homogeneous solution, $U$. We then use the summability of this monotone quantity to ``stitch together" these annuli and prove that our solution $u$ stays (quantitatively) close to a fixed rotation of % single one-homogeneous solution 
$U$ as we zoom in at the blow-up point (this establishes uniqueness of the blowup). This is the content of Lemma \ref{l:stitching}. On the other hand, we also show that the difference of Weiss' energies for $u$ and $U$ at the same scale $r$ is controlled by the $C^2$ difference between them on an annulus containing $\partial B_r$,
%Finally, we need to show that the drop the monotone quantity from scale $r$ to $\infty$ (or to $0$) is controlled by the difference between our solution and the blowup in an annulus at scale $r$, 
see Lemma \ref{l:simontrick}. % (this is an analogue of an estimate due to L. Simon \cite{Simon}). 
Putting all this together allows us to show power rate of decay in the monotone quantity, thus proving Theorem \ref{thm AA}.

Experts in minimal surface theory will find many of the ingredients in this high level outline familiar. However, we would like to emphasize that justifying the linearization of \eqref{e:bernoulliweak}, which is key to Lemma \ref{lemma cone approx}, is non-trivial. That is to say, we want to show that if $u$ converges to $U$ (in $L^2$) then $u-U$ (suitably renormalized) converges to a solution of the homogeneous Robin boundary value problem, \eqref{eq:lineareq}. In the setting of minimal surfaces, justifying the analogous statement (once  $\epsilon$-regularity is established) follows by writing one minimal surface as a graph over the other and applying regularity theory for elliptic PDEs.  

This process is more complicated for the Bernoulli problem in two ways. Firstly, the nonlinear problem \eqref{e:bernoulliweak} has two equations, a bulk equation on the positivity set and a boundary condition on the free boundary (this issue has been known to cause serious problems in the stability analysis, see for example \cite{JerisonSavin}); and secondly, $u$ and $U$ solve these equations on different domains. Naive attempt to mapping $\{u > 0\}$ onto $\{U > 0\}$ will introduce errors that ruin the perturbative analysis (see \cite{FSV} where linearizing the one-phase problem at branch points runs into a philosophically similar issue with a very different solution). %\textcolor{red}{Max: Should we omit the following?} We mention that similar difficulties also arise when trying prove the analogue of L. Simon's estimate in Lemma \ref{l:simontrick}.

As far as we are aware, %for the Bernoulli problem 
the only other place where this linearization with appropriate error terms is justified is in \cite{DSJS}. In that paper, which has inspired us greatly, the situation is simplified by the fact that the perturbation is signed (so $u$ always lies to one side of $U$). 
See also the very recent, interesting preprint, \cite{Generic2}, which extends this one-sided analysis. We also mention \cite{EnSVEpi} where a related linearization is done. However, the  energy-theoretic approach in \cite{EnSVEpi} allows them to only have to linearize the ``one-homogeneous" part of the energy (i.e. the functional $\mathcal F$ defined in the proof of Lemma \ref{l:simontrick}). This reduction mostly circumvents the two difficulties described above. This difference in approach, i.e.  linearizing the entire equation versus the one-homogeneous part of the energy, is also the reason for the slight discrepancy between the notions of integrability (our Definition \ref{def: intcone} and \cite[Definition 2.4]{EnSVEpi}). See Remark \ref{r:intdiff} for more discussion. %As mentioned above, energy based arguments do not seem easily adapted to the study of general critical points (as opposed to minimizers). 

Thus the main technical contribution of our paper is to rigorously justify the linearization around a general one-homogeneous $U$ with smooth free boundary away from the origin (when we do not assumed one-sidedness). This is done in Section \ref{sec: conv}. Without burdening the reader with too many details, there are two main ideas: (i) we use Cauchy-Kovalevskaya to extend $u, U$ analytically so that they are both defined and harmonic on the same domain (see Lemma \ref{lemma: extension}); (ii) we derive a Moser-type estimate by exploiting that $u-U$ satisfies the homogeneous equation up to an error which is quadratic, i.e. \eqref{eq:linaroundu}, which allows us to control the smooth norms of $u-U$ by the $L^2$-norm of $u-U$ -this is Lemma \ref{lemmaL2Linfty}. We believe this general approach will be useful in other situations where one wants analyze solutions of \eqref{e:bernoulliweak} near a homogeneous solution with an isolated singularity.

%The study of solutions to the one-phase problem parallels in many senses the study of minimal surfaces. In particular, homogeneous global solutions play a pivotal role in the study of fine regularity properties of solutions, essentially solutions are modeled after conical solutions at a microscopic level. 

%\begin{itemize}
  %  \item Mention that %in our case we have two equations rather than 1. Also that the  domains of definition of solutions differ and for that reason we need to carry out the extension. 
%\end{itemize}

Let us turn to the proof of Theorem \ref{thm main}. From the above discussion, we know that $u-U$ (suitably extended) is a harmonic function satisfying inhomogeneous Robin boundary values - see \eqref{eq:linaroundu}. Using the fact that the inhomogeneity is quadratic in $u-U$, we construct particular solutions, $v_p$, to \eqref{eq:linaroundu} (see Lemma \ref{lemma part sol}) and use linear analysis to conclude that the order of vanishing of $u-U$ at infinity is dictated by the behavior of solutions to the homogeneous equation \eqref{eq:lineareq}. We then use the characterization of the eigenvectors at the bottom of the spectrum (which is the essence of the definition of strong integrability) in addition to a maximal principle argument, to get the desired result. This is done in Section \ref{s:smooth}. 

This tracks very closely the approach of \cite{SimonSolomon}, but, as in the proof of Theorem \ref{thm AA}, the Bernoulli problem poses additional difficulties.\footnote{Of course there are some arguments, such as detecting possible translations, which are actually simpler for the Bernoulli problem.} The major new difficulties in this setting are firstly, characterizing the bottom of the spectrum of the linearization around the De Silva-Jerison cone; and secondly, constructing a particular solution for general strictly stable regular cones. The first issue, in the case of quadratic cones for minimal surfaces, comes from the fact that the links of these cones are the product of spheres, and thus all the eigenfunctions are known explicitly.
% (since the links of these cones are products of spheres, the eigenvalues and eigenvectors are explicit). 
 Our spectral analysis is more involved and takes place in Section \ref{ss:DSJ}. In particular, since our cones are axially symmetric (in other words $\mathbb S^{d-2}$-symmetric), we may use the separation of variables to split the spectral problem into two problems, one on $\mathbb S^{d-2}$ with no boundary condition, and one ODE with Robin boundary condition in the orthogonal direction.  While our analysis in this paper is specifically for the De Silva-Jerison cone, we believe, as mentioned above, that the tools and techniques should allow us to compute the bottom of the spectrum of a large number of symmetric one-homogeneous solutions to the Bernoulli problem. 

In regards to the particular solution, in \cite{SimonSolomon} they construct the desired particular solution by first expanding the non-linearity, slice by slice, in Jacobi fields and then, using the expansion of the Laplacian in spherical coordinates, finding the correct radially varying coefficients via integration. In our setting, \eqref{eq:linaroundu} has the inhomogeneous boundary condition, %which does not have any radial derivatives, 
thus precluding us from using the same trick. Instead, we introduce an additional spectral basis (the boundary Robin basis, see \eqref{eq bdry spectrum}) to transfer the non-linearity from the boundary to the bulk. Once the non-linearity appears in the bulk, we can treat it similarly to \cite{SimonSolomon}.

%In\cite{SimonSolomon}, Simon and Solomon showed a macroscopic counterpart, proving a rigidity result for global minimal boundaries asymptotic to area minimizing quadratic cones. More precisely, Simon and Solomon showed that the surface in question must be either the cone itself or a leaf of the foliation constructed in \cite{HardtSimon}. The analogous foliation for global homogeneous minimizers of the Alt-Caffarelli functional was constructed in \cite{DSJS}. So far the only homogeneous solution known to be minimizing for the Alt-Caffarelli functional is the axially symmetric solution $U$ investigated in \cite{DSJ}.

%Our analysis relies on the spectral properties of the linearized operator. First of all, {\color{red} [ explain how the weak notion of being asymptotic can be improved to uniqueness of blow-down with a rate of convergence. Once the rate of convergence is obtained, using spectral analysis we characterize the possible profiles at infinity of solutions asymptotic to the cone which leads to the classsification provided in Theorem \ref{thm main}]}. 

\subsection{Organization of the paper.} 
In Section \ref{sec:linear}, we discuss the linear operator of %of the operator obtained by linearizing 
the one-phase problem around regular cones (non necessarily stable), presenting a detailed analysis of some of its properties. In particular, we analyze the spectrum associated with the De Silva-Jerison cone and show that it belongs to the class of strongly integrable cones (see Definition \ref{def: sintegrable}) to which Theorems \ref{thm AA} and \ref{thm main} apply. Section \ref{sec: conv} mainly establishes some quantitative estimates for the differences of two nearby solutions, by studying the one-phase problems as a quantitative perturbation of the linear operator. % that will allow us to linearize the one-phase problem. 
Section \ref{sec: AAresult} is devoted to the proof of Theorem \ref{thm AA}, while in Section \ref{s:smooth} we prove Theorem \ref{thm main}. Finally, for ease of reference, the \nameref{sec: appendix} collects some useful inequalities including linear estimates for boundary value problems with Robin boundary conditions (oblique derivatives). \\

\section{Some properties of the linearized operator}\label{sec:linear}

We begin this section by specifying the notions of classical solutions and regular cones.
\begin{definition}\label{def regular sol}
    Given an open set $D \subset \R^d$, we say that $w: D\to \R_+$ is a classical solution to the one-phase free boundary problem in $D$, if $\partial\{w>0\}\cap D$ is a $C^2$ hypersurface, $w \in C^1\big(\overline{\{w>0\}}\cap D\big)$, and $w$ solves \eqref{e:bernoulliweak} in $D$.
\end{definition}

Let us explicitly mention that our definition of classical solution rules out ``two-phase" solutions like $|x_n|$, which  satisfies \eqref{e:bernoulliweak} in some senses but is not a classical solution because it is  not $C^1$ in the \emph{closure} of its positivity set. 

\begin{definition}\label{def regular cone}
    If $U:\R^n \to \R^+$ is a $1$-homogeneous function, we say that $U$ is regular cone if it is a classical solution to the one-phase free boundary problem in $\R^d \setminus \overline{B_r(0)}$ for any $r>0$.
\end{definition}

At the risk of redundancy, we note that under Definition \ref{def regular cone}, $U(x) = |x \cdot e|$ is not a regular cone for any $e\in \mathbb{S}^{d-1}$.\\

Let $U$ be a regular cone in the sense of Definition \ref{def regular cone}. This implies that the positivity set $\Omega = \{U>0\}$ is a non-trivial cone with smooth boundary in $\R^d \setminus \{0\}$. Let $\nu$ denote the unit inner normal vector to $\Omega$, and let $H$ denote the mean curvature of $\partial\Omega$ towards the complement of $\Omega$. Recall that linearizing the Euler-Lagrange equations \eqref{e:bernoulliweak} around $U$ gives 
\begin{equation}\label{eq:lineareq}
    \left\{\begin{array}{ll}
\Delta u = 0 & \text{ in } \Omega \\
\partial_{\nu} u + H u = 0 & \text { on } \partial\Omega.
\end{array}
\right. \tag{$L$}
\end{equation} 
We also recall that  $H\geq 0$ with $H>0$ iff $\Omega$ is not a half-space, see \cite{CJK, JerisonSavin}.\\

Since $\Omega$ is a cone, we can separate variables and look for solutions to \eqref{eq:lineareq} of the form 
\[ u(x) = |x|^{\gamma} \phi\left( \frac{x}{|x|} \right), \]
where the restriction of $u$ to the sphere $\mathbb{S}^{d-1}$ yields a solution to the (interior) Robin spectral problem in $\Sigma:= \Omega \cap \mathbb{S}^{d-1}$
\begin{equation}\label{eq:evonlink}
    \left\{\begin{array}{ll}
        -\Delta_{\mathbb{S}^{d-1}} \phi = \lambda \phi  & \text{ in } \Sigma \\
        \partial_{\nu} \phi + H\phi = 0 & \text{ on } \partial\Sigma 
    \end{array}\right.\tag{$L_s$}
\end{equation}
with an associated homogeneity given by 
\begin{equation}\label{eq gamma}
    \gamma_\pm := -\frac{d-2}{2} \pm \sqrt{\left(\frac{d-2}{2} \right)^2 + \lambda }.
\end{equation}

Solutions to \eqref{eq:evonlink} are eigenfunctions $\{\phi_j\}_{k=1}^\infty$ to the Laplace-Beltrami operator on $\Sigma \subset \mathbb{S}^{d-1}$ with Robin boundary condition, with discrete spectrum given by the eigenvalues
\[ \lambda_1 < \lambda_2 \leq \cdots \to +\infty. \]

Moreover, the first eigenvalue $\lambda_1$ has the variational characterization
\begin{equation}\label{eq quadratic form}
   \lambda_1 = \inf \frac{ \int_{\Sigma} |\nabla_{\theta}w|^2 - \int_{\partial \Sigma} H w^2}{\int_{\Sigma} w^2}.
\end{equation}
Notice that testing \eqref{eq quadratic form} with a constant function provides the upper bound
\begin{equation}\label{eq quadratic form2}
   \lambda_1 \leq  - \frac{\int_{\partial \Sigma} H }{\mathcal{H}^{d-1}(\Sigma)}<0.
\end{equation}

For $j\geq 2$, $\lambda_j$ is associated to a pair of homogeneous solutions to \eqref{eq:lineareq} which can be written as $f_j(r)\phi_j(\omega)$ and $f_{-j}(r)\phi_j(\omega)$ with $f_j(r)=r^{-\frac{(d-2)}{2}+\delta_j}$, where $\delta_j := \sqrt{\left(\frac{d-2}{2} \right)^2 + \lambda_j}$ and $\delta_{-j}:=-\delta_j$. For the case $j=1$, there is a slight subtlety. Let us first recall that the stability of the solution $U$, as a critical point of \eqref{eq:functional}, is equivalent to 
\begin{equation}\label{eq:stablev}
    \lambda_1 \geq -\left(\frac{d-2}{2}\right)^2,
\end{equation} 
see for example \cite[Proposition 2.1]{JerisonSavin}. If the inequality in \eqref{eq:stablev} is strict, we say that $U$ is strictly stable and define $f_{\pm 1}(r)$ as above; however, in the case of equality (or equivalently $\delta_1=0$), we define $f_1(r)=r^{-\frac{(d-2)}{2}} \ln(r)$ and $f_{-1}(r)=r^{-\frac{(d-2)}{2}}$, see \cite{DSJS}. 

Geometrically, the associated eigenfunction $\phi_1$ is the infinitesimal generator of shrinking/enlarging the aperture of the cone $\Omega$, and induces the foliation constructed in \cite{DSJS}. More precisely, if $U$ is a strictly stable global minimizer, \cite[Theorem 1 -(iii) ]{DSJS}] guarantees the existence of universal constants $R, \bar{a}, \bar{b}, \alpha '>0$ such that
\begin{eqnarray} \label{eq:upperfol}
        \overline{U}(x)&=&U(x)+\bar{a}r^{-\frac{(d-2)}{2}\pm\delta_1}\phi_1(\omega)
        +O\!\bigl(|x|^{-\frac{(d-2)}{2}\pm\delta_1-\alpha'}\bigr),
        \qquad\\  \label{eq:lowerfol}
          \underline{U}(x)&=&U(x)-\bar{b}r^{-\frac{(d-2)}{2}\pm\delta_1}\phi_1(\omega)
        +O\!\bigl(|x|^{-\frac{(d-2)}{2}\pm\delta_1-\alpha'}\bigr),
\end{eqnarray}
for $ x\in\{U>0\}\setminus B_{R}$.\\\

Additionally, infinitesimal generators of translations and rotations are always  solutions of \eqref{eq:evonlink}. In the case of translations, the generators are given by the directional derivatives $\partial_{e_i} U$ for $i=1,\ldots, d$, which are solutions to \eqref{eq:evonlink} associated with the eigenvalue $\lambda=0$. To see this, notice that the interior equation follows the harmonicity and $0$-homogeneity of $\partial_{e_i} U$ in $\R^d$, whereas the boundary condition comes from noticing that 
\begin{equation}\label{eq boundaryhessian}
    D^2U[\nabla U]=-H\nabla U
\end{equation}
on $\partial \Omega\setminus \{0\}$, see \cite[Section 2]{{JerisonSavin}}. In the case of rotations, we notice that if $M$ is a skew-symmetric matrix, and $O(t)=\exp(tM)$ is the corresponding one-parameter family of rotations generated by $M$, then $U_t(x)=\frac{U(O(t)x)-U(x)}{t}$ is harmonic for any $t\neq 0$, and thus $ \phi=\frac{d}{d t} U_t|_{t=0}(x) = \nabla U(x)\cdot M[x] $ is a $1$-homogeneous harmonic function and thus a solution to the interior equation in \eqref{eq:evonlink} associated with the eigenvalue $\lambda=d-1$. Regarding the boundary condition, we observe that \eqref{eq boundaryhessian} combined with the skew-symmetry of $M$ yields
$$\nabla\phi\cdot \nabla U(x) = D^2U(x)[\nabla U(x)]\cdot M[x]+M[\nabla U(x),\nabla U(x)]=-H\phi.$$ 

Following the conventions in the minimal surface and free boundary literature we consider the following two definitions.

\begin{definition}[Integrability]\label{def: intcone}
   A regular cone $U$ is said to be integrable through rotations if all $1$-homogeneous solutions of \eqref{eq:lineareq} are generated by rotations of $U$.
\end{definition}

\begin{remark}\label{r:intdiff}
    Let us discuss the difference between our notion of integrability and \cite[Definition 2.4]{EnSVEpi}. Our condition characterizes one-homogeneous solutions to \eqref{eq:lineareq} as rotations, whereas \cite{EnSVEpi} characterizes the kernel elements of the ``one-homogeneous part" of the energy \eqref{eq:functional}, which they call $\mathcal F$, as rotations (and has an additional technical condition). 
    
   These definitions are similar but ours appears to be slightly more general. Indeed note that 1-homogeneous solutions to \eqref{eq:lineareq} are in one-to-one correspondence with solutions to \eqref{eq:evonlink} when $\lambda = d-1$, which in turn are kernel elements of $\delta^2\mathcal F(0)$ in \cite{EnSVEpi}, see \cite[Equation (A.14)]{EnSVEpi} (note that any solution to \eqref{eq:evonlink} with $\lambda = (d-1)$ must integrate to zero on $\partial \Sigma$ by a simple integration by parts argument). This implies that every cone which is integrable in the sense of \cite{EnSVEpi} is also integrable in our sense. 

   The reverse is not so clear because there could be kernel elements of $\delta^2\mathcal F(0)$ which do not integrate to zero on $\Sigma$ (and thus do not solve \eqref{eq:evonlink}). Furthermore, the additional condition in \cite[Definition 2.4]{EnSVEpi}, that the signed eigenvalue of $\delta^2 \mathcal F(0)$ be perpindicular to the other elements of the index of $\delta^2\mathcal F(0)$, does not seem to be connected to the characterization of one-homogeneous solutions to \eqref{eq:lineareq}. 

   However, it should be mentioned that all known examples of integrable cones (i.e. the De Silva-Jerison cones) are integrable in both senses.  
\end{remark}

\begin{definition}[Strong integrability]\label{def: sintegrable}
    A regular cone $U$ is said to be strongly integrable if the linearized operator \eqref{eq:lineareq} satisfies
    \begin{enumerate}
    \item  $\lambda_1 >  -\left(\frac{d-2}{2}\right)^2$, i.e., $U$ is strictly stable.
        \item  The eigenspace associated with $\lambda=0$ is generated by $\partial_{e_i} U$ for $i=1,\ldots, d$,
        \item The eigenspace associated with $\lambda = d-1$ is generated only by functions of the form $\nabla U(x)\cdot M[x]$ with $M$-skew symmetric.
        \item If $\lambda$ is an eigenvalue of \eqref{eq:lineareq}, then $\lambda = \lambda_1, 0, d-1$ or $\lambda> d-1$.
    \end{enumerate} 
\end{definition}

We end this subsection by observing that,  given $0<R_1\leq R_2\leq\infty$, any solution to the linearized problem \eqref{eq:lineareq} of the form $u: \Omega \cap(B_{R_2}\setminus B_{R_1})\to \R$ can be written as linear combinations of homogeneous Jacobi fields, namely
    \begin{eqnarray}\nonumber
        u(r \omega) & = &\sum_{|j|>0} a_j f_j(r)\phi_j(\omega), \quad \int_{\Sigma}\phi_j^2=1\\\label{eq fourierexp}
        \text{satisfying } &&\sum_{|j|>0} a_j^2 f_j(r)^2 <\infty.
    \end{eqnarray}

\subsection{Particular solutions of the linearized operator}

In this subsection, we discuss the existence and decay properties of particular solutions of
\begin{equation}\label{eq part sol}
\left\{
\begin{alignedat}{2}
\Delta u &= 0,                   &\quad& \Omega\setminus B_R,\\
\partial_\nu u + H u &= G,      &\quad& \partial\Omega\setminus B_R.
\end{alignedat}
\right.
\end{equation}
where $R>0$ and $G$ is a continuous function with suitable decaying properties. Using spherical coordinates, we can rewrite \eqref{eq part sol} for $u(r\cdot)$ on any sphere $\partial B_r$, as long as $r>R$ as follows
 \begin{equation}\label{eq spherical sol}
\left\{
\begin{alignedat}{2}
    \partial_{rr}u+ \frac{d-1}{r}  \partial_{r}u+ \frac{1}{r^2}  \Delta_{\mathbb{S}^{d-1}} u &= 0,  &\quad& \Sigma,\\
       \partial_\nu u+ Hu &= r G, &\quad& \partial \Sigma,
\end{alignedat}
\right.
\end{equation}
where $\Sigma =\mathbb{S}^{d-1}\cap \Omega$, and $\nu$ is the unit normal vector to $\partial \Sigma$ pointing into $\Sigma$. This transformation allows us to find solutions of \eqref{eq part sol} by solving \eqref{eq spherical sol} for $r$ sufficiently large, which, in turn, we address by ``transferring'' the boundary data to the interior. We do this through the (boundary) Robin spectrum of $\Sigma$, i.e., 
\begin{equation}\label{eq bdry spectrum}
\left\{
\begin{alignedat}{2}
       \Delta_{\mathbb{S}^{d-1}} \psi_k &= 0,  &\quad& \Sigma,\\
       \partial_\nu \psi_k+ H\psi_k &= \ell_k\psi_k, &\quad& \partial \Sigma,
\end{alignedat}
\right.
\end{equation}
Notice that if $H$ is constant, solutions to \eqref{eq bdry spectrum} are Stekelov eigenfunctions of $\Sigma$ with eigenvalues shifted by $-H$. In general, since $H$ is bounded, standard linear theory guarantees that the eigenvalues of \eqref{eq bdry spectrum} are a discrete set of real numbers satisfying
\[ \ell_1 < \ell_2 \leq \cdots \to +\infty \]
and that $\{\psi_k\}$ form a orthonormal basis for $L^2(\partial \Sigma)$.\\

As a last ingredient for the construction of particular solutions, let us recall that interior Robin eigenfunctions, i.e., solutions $(\phi_k,\lambda_k)$ to \eqref{eq:evonlink}, induce homogeneous solutions of \eqref{eq:lineareq} with homogeneity $-\frac{(d-2)}{2}\pm\delta_k$, where $\delta_k := \sqrt{\left(\frac{d-2}{2} \right)^2 + \lambda_k}$.
 
\begin{lemma}\label{lemma part sol}
 Let $U > 0$ be a strictly stable regular cone and let $\Omega = \{U > 0\}$ be its positivity set. Let $R_0>0$, and $\beta>0$ such that $\frac{d}{2}\pm\delta_k -\beta \neq 0$ for $k\in \N$ and assume that $G\in C^2(\Omega \setminus B_{R_0})$ satisfying
 $$G(x) + |x||\nabla G(x)| + |x^2||\nabla^2 G(x)| \leq C|x|^{-{\beta}}. $$
 Then, there exists a solution $u_p$ to \eqref{eq part sol} with right hand side $G$ such that $|u(x)|\leq C(R_0)|x|^{{1-\beta}}$. 
\end{lemma}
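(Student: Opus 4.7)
My plan is to work in spherical coordinates, where \eqref{eq part sol} becomes \eqref{eq spherical sol} with boundary data $rG$ satisfying $|rG(r\omega)|\lesssim r^{1-\beta}$ and analogous decay for its angular derivatives. Following the strategy outlined in the introduction, I reduce the problem in two spectral stages: first transfer the inhomogeneity from the boundary to the bulk using the boundary Robin basis $\{\psi_k\}$ of \eqref{eq bdry spectrum}, and then diagonalize the resulting bulk inhomogeneous equation via the interior Robin basis $\{\phi_j\}$ of \eqref{eq:evonlink}, reducing to a family of Euler ODEs solvable by variation of parameters. This parallels \cite{SimonSolomon}, with the boundary-Robin step being the new ingredient needed because \eqref{eq part sol} has inhomogeneous boundary (rather than bulk) data.

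Concretely, expand the boundary data slice by slice as $rG(r\omega)=\sum_k \tilde g_k(r)\psi_k(\omega)$ on $\partial\Sigma$, where the decay of $G$ together with Parseval bounds gives $|\tilde g_k(r)|+r|\tilde g_k'(r)|+r^2|\tilde g_k''(r)|\lesssim r^{1-\beta}$ with sufficient summability. Using the interior extension of each $\psi_k$ to $\Sigma$, set
\[
v(r\omega):=\sum_{\ell_k\ne 0}\frac{\tilde g_k(r)}{\ell_k}\psi_k(\omega), \qquad \omega\in\Sigma,
\]
so that $\partial_\nu v+Hv=rG$ on $\partial\Sigma$, modulo the kernel of the boundary Robin operator (treated below). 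Since $\Delta_{\mathbb S^{d-1}}\psi_k\equiv 0$ in $\Sigma$, the bulk Laplacian picks up only radial derivatives, giving $|v(r\omega)|\lesssim r^{1-\beta}$ and $|\Delta v(r\omega)|\lesssim r^{-1-\beta}$, with pointwise control obtained from $L^2(\Sigma)$ estimates and elliptic regularity on each sphere slice.

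Set $u_p=v+w$, so that $w$ must satisfy $\Delta w=-\Delta v$ with homogeneous Robin data. Expand $-\Delta v(r\omega)=\sum_j s_j(r)\phi_j(\omega)$ with $|s_j(r)|\lesssim r^{-1-\beta}$ and make the ansatz $w=\sum_j w_j(r)\phi_j(\omega)$. Each $\phi_j$ automatically gives the homogeneous Robin condition, so the PDE reduces to
\[
w_j''(r)+\tfrac{d-1}{r}w_j'(r)-\tfrac{\lambda_j}{r^2}w_j(r)=s_j(r),\qquad r>R_0,
\]
with fundamental solutions $r^{-\frac{d-2}{2}\pm\delta_j}$ (well-defined because strict stability gives $\delta_j>0$ for all $j$). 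I solve each ODE by variation of parameters, integrating the growing branch from $r$ to $\infty$ and the decaying branch from $R_0$ to $r$. Plugging the test exponent $r^{1-\beta}$ into the Euler operator produces the indicial factor $(1-\beta)^2+(d-2)(1-\beta)-\lambda_j=-\bigl(\beta-\tfrac{d}{2}+\delta_j\bigr)\bigl(\beta-\tfrac{d}{2}-\delta_j\bigr)$, which is nonzero by the non-resonance hypothesis. Hence no logarithmic corrections appear and $|w_j(r)|\lesssim r^{1-\beta}$, with constant controlled by the reciprocal of the non-resonance gap. Summing the series and combining with the bound on $v$ gives $|u_p(x)|\le C(R_0)|x|^{1-\beta}$.

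The principal technical obstacle is the possible kernel of the boundary Robin operator ($\ell_k=0$), which is not absorbed by the naive lift. By \eqref{eq bdry spectrum}, such modes are exactly interior Robin eigenfunctions with $\lambda=0$ that also satisfy the homogeneous Robin condition, so the kernel is finite-dimensional. For each such mode I would, via the Fredholm alternative, replace the lift $\tilde g_k/\ell_k\cdot\psi_k$ by a two-part ansatz---a fixed smooth extension $\Psi_k$ on $\Sigma$ with $(\partial_\nu+H)\Psi_k=\psi_k$ on $\partial\Sigma$ plus a compensating bulk source chosen so that the relevant compatibility holds---and absorb the compensating source into the bulk correction step above. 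The remaining bookkeeping, namely uniform convergence of the two series (relying on the $C^2$--regularity of $G$ and elliptic boundary estimates on $\Sigma$) and the dependence of constants on the non-resonance gaps $\bigl|\beta-\tfrac{d}{2}\pm\delta_j\bigr|$, is routine once the two-step structure is in place.
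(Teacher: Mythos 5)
Your proposal follows the same two-step spectral architecture as the paper: first lift the boundary inhomogeneity to a bulk source via the boundary Robin basis $\{\psi_k\}$, then solve the resulting bulk problem with homogeneous Robin data by expanding in the interior Robin basis $\{\phi_j\}$ and solving Euler ODEs by variation of parameters. Two points deserve attention.

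First, the ``Fredholm alternative'' framing for the kernel modes $\ell_k = 0$ is a red herring. There is no solvability obstruction to dodge: you are not trying to solve $\Delta_{\mathbb S^{d-1}}\Psi_k = 0$ with $(\partial_\nu + H)\Psi_k = \psi_k$ (where Fredholm would bite), you merely need \emph{some} extension of $\psi_k$ into $\Sigma$ satisfying the boundary condition, after which whatever $\Delta_{\mathbb S^{d-1}}\Psi_k$ turns out to be is absorbed into the bulk source for Step~2. The paper's choice is explicit and clean: take $\Psi_k(\omega) = U(\omega)\psi_k(\omega)$. Since $U = 0$ and $\partial_\nu U = 1$ on $\partial\Sigma$, one gets $(\partial_\nu + H)(U\psi_k) = \psi_k$ directly, and $\Delta_{\mathbb S^{d-1}}(U\psi_k) = 2\nabla U\cdot\nabla\psi_k - (d-1)U\psi_k$ is a controlled bulk term. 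You assert the existence of $\Psi_k$ abstractly; identifying this concrete lift is the small but genuine idea your write-up is missing.

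Second, the blanket prescription ``integrate the growing branch from $r$ to $\infty$ and the decaying branch from $R_0$ to $r$'' is not valid for every mode. The coefficient integral multiplying $r^{-\frac{d-2}{2}+\delta_k}$ has integrand $\sim s^{\frac{d-2}{2}-\delta_k-\beta}$, which diverges at $\infty$ precisely when $\frac{d}{2}-\delta_k-\beta > 0$ (a regime that certainly occurs for small $\delta_k$ and small $\beta$). The paper handles this with a case distinction in the choice of endpoints $a_k, b_k \in \{R_0, \infty\}$ depending on the signs of $\frac{d}{2}\pm\delta_k - \beta$ --- which is exactly where the non-resonance hypothesis $\frac{d}{2}\pm\delta_k - \beta \neq 0$ enters. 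You correctly note the hypothesis is used to avoid logarithms, but you should also realize it governs which endpoint is chosen, and that your fixed choice is incorrect in one of the regimes. (As a side remark, your sign convention $u_p = v + w$ with $\Delta w = -\Delta v$ is the clean way to phrase Step~2.)
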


\begin{proof}

 \medskip

 \noindent {\it Step 1:} Firstly, we take care of the inhomogeneous boundary condition and transfer it to the interior. We show the existence of a function $u_1(r\omega)$ solving 
\begin{equation}\label{eq spherical bsol}
  \left\{
\begin{alignedat}{2}
    \partial_{rr}u_1+ \frac{d-1}{r}  \partial_{r}u_1+ \frac{1}{r^2}  \Delta_{\mathbb{S}^{d-1}} u_1 &= f, &\quad& \Sigma\times(R_0,\infty),\\
       \partial_\nu u_1(r\omega)+ H(\omega)u_1(r\omega) &= r G(r\omega), &\quad& \partial \Sigma\times (R_0,\infty),
\end{alignedat}
\right.
\end{equation}
 where $\nu$ is the unit normal to $\partial \Sigma$ pointing into $\Sigma$; and furthermore $u_1$ and $f$ satisfy $\Vert u_1(r \cdot)\Vert_{L^2(\Sigma)}\leq C r^{1-\beta}$ and $\Vert f(r\cdot)\Vert_{L^2(\Sigma)} \leq C r^{-1-\beta}$ for $r\geq R_0$.

 Consider the boundary Robin spectrum $\{(\psi_k, \ell_k)\}_{k\in \N}$ as in \eqref{eq bdry spectrum}.
 For each $r\geq R_0$ we have the expansion  $ G(r \cdot) = \sum_{k\in \N} G_k(r)\phi_k$ in $L^2(\partial \Sigma)$, where $G_k(r)= \int_{\partial \Sigma} G(r\omega)\psi_k(\omega) d\omega$. By the assumption on the growth of $G$ we have that $|G_k(r)|\leq Cr^{-\beta}$ for $r\geq R_0$. 
 In principle, we can define $u_1(r\cdot)$ using the expansion of $G(r\cdot)$ and $f$ will in turn come from the corresponding radial derivative of $G_k(r)$'s. The caveat is the resonance between the boundary Robin spectrum \eqref{eq bdry spectrum} and interior Robin spectrum \eqref{eq:evonlink}, namely they have common eigenfunction with eigenvalue $\ell_k = \lambda_{k'} = 0$. To resolve this issue, we let $I := \{ k\in \N \, | \, \ell_k =0\}$, i.e., the set of indices corresponding to the $0$ eigenvalue of \eqref{eq bdry spectrum}. Standard spectral theory implies that $I$ has finitely many elements (it has at least $d$, since $\partial_{e_i} U$ is an eigenfunction associated with $0$). For $r\geq R_0$, let 
 $$u_1(r\omega)= \sum_{ k \in \N\setminus I} \frac{rG_k(r)}{\ell_k}\psi_k(\omega)+ \sum_{k\in I} rG_k(r) U(\omega) \psi_k(\omega).$$
 Notice that
 $$\Delta u_{1}= \partial_{rr}u_1+ \frac{d-1}{r}  \partial_{r}u_1+ \frac{1}{r^2} \Delta_{\mathbb{S}^{d-1}} u_1 = f$$ where 
 \begin{align}
 	f(r \omega) & =   \partial_{rr}u_1+ \frac{d-1}{r}  \partial_{r}u_1 \\\label{eq f}
 		& \qquad +\frac{1}{r}\sum_{k\in I} G_k(r) \left(-(d-1) U(\omega)\psi_k(\omega)+2\nabla U(\omega)\cdot \nabla \psi_k(\omega) \right) \notag
 \end{align}
% \begin{eqnarray*}\label{eq f}
%     f(r \omega) =   \partial_{rr}u_1+ \frac{d-1}{r}  \partial_{r}u_1+\frac{1}{r}\sum_{k\in I} G_k(r)(-(n-1) U(\omega)\psi_k(\omega)+2\nabla U(\omega)\cdot \nabla \psi_k(\omega)
% \end{eqnarray*}
 satisfies  $\Vert f(r\cdot)\Vert_{L^2(\Sigma)} \leq C r^{-1-\beta}$ for $r\geq R_0$, thanks to the decaying properties of $G$ and its derivatives. Regarding the boundary condition, we notice that since for all $k\in I$
 \[ \partial_{\nu}(U(\omega) \psi_k(\omega))=-H(\omega) U(\omega) \psi_k(\omega)+\partial_\nu U(\omega) \, \psi_k(\omega) = \psi_k(\omega), \]  
 we get
 \begin{equation*}
     \partial_\nu u_{1} (r\cdot) +Hu_{1}(r\cdot) = \sum_{ k \in \N} rG_k(r) \psi_k = rG(r\cdot)
 \end{equation*}
 proving \eqref{eq spherical bsol}. Additionally, from the definition of $u_{1}$ and the orthogonality properties of the spectrum, one readily sees that $\Vert u_1(r\cdot)\Vert_{L^2(\Sigma)}\leq Cr^{1-\beta}$ for $r\geq R_0$.

 \medskip

 \noindent {\it Step 2:}  We construct a function $u_2$ satisfying
\begin{equation}\label{eq spherical rpart}
      \left\{
\begin{alignedat}{2}
    \partial_{rr}u_2+ \frac{d-1}{r}  \partial_{r}u_2+ \frac{1}{r^2}  \Delta_{\mathbb{S}^{d-1}} u_2 &= f, &\quad& \Sigma\times(R_0,\infty),\\
       \partial_\nu u_2(r\omega)+ H(\omega)u_2(r\omega) &= 0, &\quad& \partial \Sigma\times (R_0,\infty),
\end{alignedat}
\right.
\end{equation}
with $\Vert u_2(r\cdot)\Vert_{L^2(\Sigma)}\leq C r^{1-\beta}$ for $r\geq R_0$.\\

We now return to the interior Robin spectrum
  \begin{equation}\label{eq robin spectrum}
       \left\{
\begin{alignedat}{2}
        -\Delta_{\mathbb{S}^{d-1}} \phi_k &= \lambda_k \phi_k, &\quad& \Sigma,\\
       \partial_\nu \phi_k+ H\phi_k &= 0, &\quad& \partial \Sigma,
\end{alignedat}
\right.
\end{equation}
which forms an orthonormal basis of $L^2(\Sigma)$, as we already discussed. Consider the expansion $f(r\cdot) = \sum f_k(r) \phi_k$ in $L^2(\Sigma)$ for each $r\geq R_0$, where $f_k(r) = \int_{\Sigma} f(r\omega) \phi_k(\omega) d\mathcal{H}^{d-1}(\omega)$ is of the order $O(r^{-1-\beta})$. We consider the function $u_2(r\omega) = \sum u_k(r) \phi_k(\omega)$ where $u_k(r)$ satisfies the ODE
\begin{equation}\label{eq cauchyeuler}
   r^2 u_k''(r) +(d-1)r u_k'(r) -\lambda_k u_k = r^2f_k(r), \quad \text{ for } r\geq R_0.
\end{equation}
%\textcolor{blue}{Max: Are we missing an $1/r$ on $f_k$?}
Note that no matter the choice of $u_k$, the resulting function $u_2$ will satisfy the boundary condition $\partial_\nu u_2 + Hu_2 = 0$ (since $\Omega$ is a cone).

To solve for $u_k$, notice that \eqref{eq cauchyeuler} is an inhomogeneous Cauchy-Euler ODE, see also \cite[Lemma 2.11]{SimonSolomon}. So, if we consider the characteristic equation associated with \eqref{eq cauchyeuler}, namely $\alpha^2  + (d-2)\alpha - \lambda_k = 0$, we obtain the roots   $\gamma_k^{\pm} = \frac{-(d-2)}{2}\pm \delta_k$ with $\delta_k := \sqrt{\left(\frac{d-2}{2} \right)^2 + \lambda_k}$. So, similarly as in \cite[Lemma 2.11]{SimonSolomon} we define 
\begin{equation}\label{e:intrep} u_k(r) = r^{\delta_k - \frac{d-2}{2}} \int_{b_k}^r s^{-2\delta_k - 1}\int_{a_k}^s t^{\delta_k + 1 + \frac{d-2}{2}} f_k(t)\, dt\, ds,
\end{equation} 
where
\[
a_k =
\begin{cases}
R_0       & \text{if } \frac{d}{2}+\delta_k -\beta > 0,\\
\infty  & \text{if } \frac{d}{2}+\delta_k-\beta < 0,
\end{cases}
\qquad
b_k =
\begin{cases}
R_0       & \text{if } \frac{d}{2}-\delta_k -\beta  > 0,\\
\infty  & \text{if } \frac{d}{2}-\delta_k -\beta  < 0,
\end{cases}
\]
which is well defined by our assumptions on $\beta$.\\

It is easily verified by direct computation that $u_k$ solves \eqref{eq cauchyeuler}, whereas since $|f_k(r)| \leq Cr^{-\beta - 1}$ we have that
$$\begin{aligned} \int_{a_k}^s t^{\delta_k + 1 + \frac{d-2}{2}} f_k(t)\, dt =& O(s^{\delta_k + \frac{d}{2} - \beta}) \Rightarrow \\
\int_{b_k}^r s^{-2\delta_k - 1}\int_{a_k}^s t^{\delta_k + 1 + \frac{d-2}{2}} f_k(t)\, dt\, ds =&   O(r^{-\delta_k + \frac{d}{2}-\beta}) \Rightarrow\\
|u_k(r)| =& O(r^{-\beta + 1}).\end{aligned}$$

Thus, we deduce that $u_p=u_1+u_2$ solves \eqref{eq part sol} and satisfies that $\Vert u_p(r\cdot)\Vert_{L^2(\Sigma)} \leq C r^{1-\beta}$ for $r\geq R_0$. Finally, we improve the $L^2$ decay to $L^\infty$ decay using linear regularity, for instance Corollary \ref{cor regdecay}.
\end{proof}

When $G=0$ in \eqref{eq part sol}, the convergence rates of the solutions can be combined with the representation \eqref{eq fourierexp} to refine their Fourier decomposition. 

\begin{lemma}\label{l:fourierimprovement}
Let $R_0>0$, and let $u:\Omega\setminus B_{R_0}$ be a solution to \eqref{eq:lineareq} with expansion 
\begin{equation}\label{eq expfour}
     u(r\omega)=\sum_{j\ge 0} a_{j}\,f_j(r)\varphi_{j}(\omega).
\end{equation}
 
Then $|u(x)|\leq C|x|^{-\beta}$ for some $\beta \in \R$ iff $a_{j}=0$ whenever $\delta_{j}-\frac{d-2}{2}>-\beta$.
\end{lemma}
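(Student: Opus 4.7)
\textbf{Plan for Lemma \ref{l:fourierimprovement}.} The proof exploits the orthonormality of $\{\varphi_j\}$ on $\Sigma$, which gives, for every $r>R_0$,
\begin{equation*}
\|u(r\cdot)\|_{L^2(\Sigma)}^2 \;=\; \sum_{j} a_j^{2}\, f_j(r)^{2},
\end{equation*}
where $f_j(r)=r^{\delta_j-\frac{d-2}{2}}$ (with the usual log-modification if $\delta_1=0$, which we treat separately at the end). The coefficients $a_j$ are determined by $u$ and independent of $r$, so one direction will come by letting $r\to\infty$ and the other by a single estimate at the reference scale $R_0$.

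\emph{Necessity.} Suppose $|u(x)|\le C|x|^{-\beta}$ on $\Omega\setminus B_{R_0}$. Then $\|u(r\cdot)\|_{L^2(\Sigma)}^{2}\le C\, r^{-2\beta}$ for all $r>R_0$, and by orthogonality, for each individual $j$,
\begin{equation*}
a_j^{2}\,r^{\,2(\delta_j-\frac{d-2}{2})} \;\le\; C\,r^{-2\beta},
\qquad \text{i.e.}\qquad
a_j^{2} \;\le\; C\,r^{-2(\beta+\delta_j-\frac{d-2}{2})}.
\end{equation*}
If $\delta_j-\tfrac{d-2}{2}>-\beta$, the exponent on the right is strictly negative, so sending $r\to\infty$ forces $a_j=0$.

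\emph{Sufficiency.} Assume $a_j=0$ whenever $\delta_j-\tfrac{d-2}{2}>-\beta$, so only indices with $\delta_j-\tfrac{d-2}{2}\le -\beta$ contribute. For any such $j$ and any $r\ge R_0$, the quantity $(r/R_0)^{2(\delta_j-\frac{d-2}{2}+\beta)}\le 1$ since the exponent is non-positive and $r/R_0\ge 1$, hence
\begin{equation*}
f_j(r)^{2}
\;=\; r^{-2\beta}\cdot r^{\,2(\delta_j-\frac{d-2}{2}+\beta)}
\;\le\; r^{-2\beta}\,R_0^{2\beta}\,f_j(R_0)^{2}.
\end{equation*}
Summing in $j$ and using the convergence of the series at the inner radius,
\begin{equation*}
\|u(r\cdot)\|_{L^2(\Sigma)}^{2}
\;\le\; r^{-2\beta}\,R_0^{2\beta}\sum_j a_j^{2} f_j(R_0)^{2}
\;=\; C(R_0,u)\,r^{-2\beta}.
\end{equation*}
This is the desired $L^{2}$ decay; to promote it to the pointwise bound $|u(x)|\le C|x|^{-\beta}$ I would rescale to the unit annulus by setting $\tilde u(y) = r^{\beta} u(ry)$, which satisfies \eqref{eq:lineareq} on $\Omega\cap(B_{2}\setminus B_{1/2})$ with a uniform $L^{2}$ bound, and then apply the Robin regularity/Moser-type estimate in Corollary \ref{cor regdecay} (exactly as invoked at the end of the proof of Lemma \ref{lemma part sol}).

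\emph{Anticipated obstacle.} The only nontrivial point is the interchange of sum and estimate in the sufficiency direction: a priori the expansion \eqref{eq expfour} converges only in $L^{2}$ at each fixed $r$, and naively bounding term by term might not give summability. The trick above bypasses this by using the inclusion of the surviving indices, which guarantees $f_j(r)/f_j(R_0)\le (r/R_0)^{-\beta}$, so the full series at radius $r$ is controlled by the full series at $R_0$ times $r^{-2\beta}$. The secondary technical wrinkle is the logarithmic leaf $f_1(r)=r^{-(d-2)/2}\ln r$ in the non-strictly-stable case, whose exponent still satisfies $\delta_1-\tfrac{d-2}{2}=-(d-2)/2$; the same argument applies after absorbing the $\ln r$ factor into the constant, provided one is careful about whether the inequality $\delta_1-\tfrac{d-2}{2}>-\beta$ is strict (it is, by hypothesis of the lemma statement).
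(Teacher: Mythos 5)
Your proof is correct and is essentially the argument the paper invokes: the paper's proof simply cites \cite[Lemma 2.10]{SimonSolomon} together with the $L^2$-to-$L^\infty$ upgrade via Corollary \ref{cor regdecay}, and what you have written out — Parseval on $\Sigma$, sending $r\to\infty$ term-by-term for necessity, and the monotonicity $f_j(r)^2\le R_0^{2\beta} r^{-2\beta} f_j(R_0)^2$ for the surviving indices for sufficiency — is the standard content of that Simon–Solomon lemma. The only remark worth flagging is your handling of the log-modified leaf $f_1(r)=r^{-(d-2)/2}\ln r$: the claim that the strict inequality "is by hypothesis of the lemma statement" is not quite accurate as phrased, since the lemma itself does not impose strict stability; the case $\delta_1=0$ with $\beta=(d-2)/2$ genuinely fails (then $f_1(r)$ is \emph{not} $O(r^{-\beta})$, yet $a_1$ is not required to vanish). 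In practice this is harmless because the lemma is only applied to strongly integrable cones, for which strict stability gives $\delta_1>0$ and the log-leaf never arises, but if one wanted the lemma in the stated generality one should either restrict to strictly stable $U$ or exclude $\beta=(d-2)/2$ when $\delta_1=0$.
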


\begin{proof}
The proof is exactly as in \cite[Lemma 2.10]{SimonSolomon} bearing in mind that decay in $L^2$  and decay in $L^\infty$ of solutions to \eqref{eq:lineareq} are equivalent thanks to Corollary \ref{cor regdecay}. 
\end{proof}

\subsection{Spectrum of the De Silva-Jerison cone}\label{ss:DSJ}
We finish this section exhibiting an example of a strongly integrable solution in the sense of Definition \ref{def: sintegrable}. We consider the so called De Silva-Jerison cones, which are axially symmetric homogeneous solutions in dimensions $d\geq 3$ (see \cite[2.7]{AltCaf}); and, when $d\geq 7$, they are proven to be minimizers in \cite{DSJ}. In fact, they are the only known examples so far of minimizing homogeneous solutions besides the flat solutions $c(x_d)_+$. % which are minimizers in dimensions $d \geq 7$. 

Up to rotations, we can fully describe these cones in spherical coordinates as follows
\begin{equation}\label{eq:DScone}
    \Omega = \left\{ (r, \theta, \varphi) \in \mathbb{R}_+ \times [0,\pi] \times \mathbb{S}^{d-2}: \frac{\pi}{2} - \theta_0 < \theta < \frac{\pi}{2} + \theta_0 \right\}, 
\end{equation} 
\[  U(r,\theta, \varphi) = \left\{\begin{array}{ll}
   cr\left(1-\cos^2(\theta) \right)^{-\frac{d-3}{4}} Q^{\frac{d-3}{2}}_{\frac{d-1}{2}}(\cos(\theta)),  & \text{ when } d \text { is odd}  \\
    cr\left(1-\cos^2(\theta) \right)^{-\frac{d-3}{4}} P^{\frac{d-3}{2}}_{\frac{d-1}{2}}(\cos(\theta)),  & \text{ when } d \text { is even}  
\end{array} \right. \]
and where the mean curvature of the free boundary on $\partial\Omega \cap \mathbb{S}^{d-1}$ is given by
\[ H = (d-2)\tan(\theta_0). \]
Here $\theta$ denotes the angle with the positive $e_1$-axis, $\theta_0>0$ is the unique angle such that $\Omega$ supports a positive one homogeneous harmonic function, and $P_{\ell}^m(t), Q_{\ell}^m(t)$ are the associated Legendre functions of the first and second kind, respectively.
 The following theorem summarizes the spectral information of the linearized operator \eqref{eq:evonlink} for the link of the De Silva-Jerison cones.

\begin{theorem}\label{thm: spectrum DSJ}
    Let $d\geq 3$ and let $U$ be the De Silva-Jerison cone characterized by the parameterization \eqref{eq:DScone}. Then,
    \begin{itemize}
        \item $\lambda_1 < -(d-2)$. %If $d \geq 7$, it also satisfies $\lambda_1>-\left(\frac{d-2}{2}\right)^2$. 
        \item $\lambda_2 = \ldots = \lambda_{d+1} = 0$ with associated eigenfunctions given by the infinitesimal generators of translations which take the form $\phi_j = c_j \partial_{e_i} U$ $j=2,\ldots,d+1$ for suitable normalizing constants $c_j$.
        \item $\lambda_{d+2} = \ldots = \lambda_{2d} = d-1$ with associated eigenfunctions $\phi_{d+j}$ given by (normalized) infinitesimal generators of rotations in the $e_1 e_j -$plane for $j=2,\ldots, d$.
        \item $\lambda_j > d-1$ for $j> 2d$.
    \end{itemize}
\end{theorem}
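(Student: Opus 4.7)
The plan is to exploit the $\mathbb{S}^{d-2}$ axial symmetry of the De Silva-Jerison cone \eqref{eq:DScone} to separate variables and reduce \eqref{eq:evonlink} on the link $\Sigma$ to a family of one-dimensional Sturm-Liouville problems. Using coordinates $(\theta,\varphi) \in I \times \mathbb{S}^{d-2}$ with $I := (\tfrac{\pi}{2}-\theta_0,\tfrac{\pi}{2}+\theta_0)$ and the splitting
\[ \Delta_{\mathbb{S}^{d-1}} = \tfrac{1}{\sin^{d-2}\theta}\partial_\theta(\sin^{d-2}\theta\,\partial_\theta) + \tfrac{1}{\sin^2\theta}\Delta_{\mathbb{S}^{d-2}}, \]
I would decompose every eigenfunction as $\phi(\theta,\varphi) = \sum_{k,m}\Phi_{k,m}(\theta)Y_k^m(\varphi)$, with $Y_k^m$ the spherical harmonics of degree $k$ on $\mathbb{S}^{d-2}$ (eigenvalue $k(k+d-3)$). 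Each $\Phi_{k,m}$ then solves a 1D Sturm-Liouville problem on $I$ with the $k$-dependent centrifugal potential $k(k+d-3)/\sin^2\theta$ and Robin boundary conditions inherited from \eqref{eq:evonlink}; denote its spectrum by $\lambda_k^{(1)} < \lambda_k^{(2)} < \cdots$. The full spectrum of \eqref{eq:evonlink} is then the union of these, each with multiplicity equal to the dimension of the degree-$k$ harmonic space on $\mathbb{S}^{d-2}$ (in particular, $1$ for $k=0$ and $d-1$ for $k=1$).

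Next I would place each known geometric eigenfunction in the appropriate sector. Since $U = U(x_1, r')$ with $r' = |x - x_1 e_1|$, the function $\partial_{e_1}U$ is $\varphi$-independent (so lies in $k=0$) and, by the cone's $\theta \mapsto \pi - \theta$ symmetry, odd with a single zero at $\theta = \pi/2$. For $j\geq 2$, both $\partial_{e_j}U$ and the rotation generator $x_1\partial_{e_j}U - x_j\partial_{e_1}U$ factor as $\varphi_j\cdot g(\theta)$ and $\varphi_j\cdot h(\theta)$ respectively, with $g > 0$ on $I$ and $h(\pi/2) = 0$, placing them both in the $k=1$ sector; rotations in the $e_ie_j$-planes with $i,j\geq 2$ vanish identically by axial symmetry. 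Further splitting the $k=0$ sector into even/odd subsectors under $\theta\mapsto\pi-\theta$ and invoking standard Sturm-Liouville nodal theory then identifies $\lambda_1$ as the simple nodeless ground state of the even subsector (necessarily negative, by testing \eqref{eq quadratic form} against the constant function), $\lambda_0^{(2)} = 0$ as the ground state of the odd subsector (eigenfunction $\partial_{e_1}U$), $\lambda_1^{(1)} = 0$ as the $k=1$ ground state with $(d-1)$-dimensional eigenspace $\mathrm{span}\{\partial_{e_j}U\}_{j\geq 2}$, and $\lambda_1^{(2)} = d-1$ as the first excited $k=1$ eigenvalue, with eigenspace spanned by the $(d-1)$ nontrivial rotations.

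It then remains to rule out any other eigenvalue below $d-1$. For $k \geq 2$ the centrifugal term $k(k+d-3)/\sin^2\theta \geq 2(d-1)$ dominates the Rayleigh quotient, and a trace inequality absorbs the bounded negative boundary contribution coming from $H = (d-2)\tan\theta_0$, yielding $\lambda_k^{(1)} > d-1$. For $k = 0, 1$, any eigenfunction beyond the ones already identified must carry strictly more interior zeros, and Sturm comparison against an explicit test solution built from $U|_\Sigma$ (which satisfies $-\Delta_{\mathbb{S}^{d-1}}U = (d-1)U$ on $\Sigma$, albeit with Dirichlet rather than Robin boundary data) produces the required strict lower bound above $d-1$ on the third eigenvalue in each of these sectors.

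The main obstacle is the sharp quantitative bound $\lambda_1 < -(d-2)$. This is a genuinely dimension-dependent inequality that cannot come from soft Sturm-Liouville arguments alone: the naive constant test function in \eqref{eq quadratic form} gives only $\lambda_1 \leq -(d-2)\tan\theta_0\cdot|\partial\Sigma|/|\Sigma|$, which is not sharp. My plan is to construct a more refined axially symmetric, reflection-even test function---for instance, a suitable combination of $U|_\Sigma$ with a correction designed to render the boundary contribution in \eqref{eq quadratic form} explicitly computable---and verify the inequality by direct computation using the associated Legendre function representation of $U$ in \eqref{eq:DScone} at the specific opening angle $\theta_0 = \theta_0(d)$ that makes $U$ a one-phase solution. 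The threshold $-(d-2)$ should ultimately emerge from a Pohozaev-type identity linking the boundary integral of $H$ on $\partial\Sigma$ to the bulk integrals of $U$ and $|\nabla U|^2$.
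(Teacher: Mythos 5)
Your separation-of-variables framework, the identification of the geometric eigenfunctions in the $k=0$ and $k=1$ sectors, and the nodal counting to place them as first and second Sturm--Liouville eigenvalues all match the paper's strategy. But there is a genuine gap in the hardest claim, precisely where you declare soft methods insufficient.

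You assert that $\lambda_1 < -(d-2)$ ``cannot come from soft Sturm-Liouville arguments alone'' and propose a Pohozaev-type identity plus explicit Legendre function computations at the critical opening angle. This is a misdiagnosis: the inequality \emph{is} soft. The key observation the paper uses is that the Rayleigh quotient $\mathcal{R}_\mu(g)$ for the Sturm--Liouville problem \eqref{eq:SL} depends on the angular eigenvalue $\mu$ only through the term
\[
\mu \cdot \frac{\int_I \sin^{d-4}(\theta)\,g^2\,d\theta}{\int_I \sin^{d-2}(\theta)\,g^2\,d\theta},
\]
and the ratio of weighted integrals is strictly greater than $1$ for every $g\not\equiv 0$, because $\sin^{d-4}\theta > \sin^{d-2}\theta$ away from $\theta=\pi/2$. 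Plugging the ground state for $\mu_2 = d-2$ into the variational characterization for $\mu_1 = 0$ then gives the strict separation $\lambda_{2,1} - \lambda_{1,1} > \mu_2 - \mu_1 = d-2$. Since you (and the paper) have already shown $\lambda_{2,1} = 0$ via the translation eigenfunction $\partial_{e_j}U$, the bound $\lambda_{1,1} < -(d-2)$ drops out in one line. No Pohozaev identity, no Legendre special-function asymptotics, no computation at the critical angle $\theta_0(d)$ is needed. Your proposed route, besides being far more laborious, is not actually carried out, and the claim that the naive route is ``not sharp'' overlooks this strict monotonicity-in-$\mu$ trick.

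The same monotonicity inequality also handles your $k\geq 2$ sector more cleanly than your centrifugal-domination-plus-trace-inequality sketch: once $\lambda_{d,1}=0$ is known, one gets $\lambda_{j,1} > \lambda_{d,1} + (\mu_j - \mu_d) \geq 0 + (2(d-1)-(d-2)) = d > d-1$ directly, without balancing against the boundary term. Finally, your treatment of the third $k=0$ eigenvalue is only gestured at; the paper closes it by observing that the two nodes $\theta_1 < \theta_2$ of $g_{1,3}$ define a subinterval on which $g_{1,3}$ is the first Dirichlet eigenfunction of the weighted operator, and since $U(\theta)$ is the positive first Dirichlet eigenfunction on the full interval with eigenvalue $d-1$, strict domain monotonicity of the first Dirichlet eigenvalue forces $\lambda_{1,3} > d-1$. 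You would do well to make this precise rather than leave it as ``Sturm comparison against an explicit test solution.''
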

\begin{remark}
\begin{enumerate}
    \item Since the solution $U$ is axially-symmetric there are no solutions genrated by rotations in the space $e_1^{\perp} \approx \mathbb{R}^{d-1}$. For this reason the geometric multiplicity of the eigenvalue $d-1$ is
\[ \dim( O(d)) - \dim( O(d-1)) = d-1. \]
    \item Note that when $d\leq 6$, the above theorem implies that
    \[ \lambda_1 < -(d-2) \leq -\left(\frac{d-2}{2} \right)^2. \]
    Thus by the discussion before (see also \cite[Proposition 2.1]{JerisonSavin}) $U$ is unstable. This gives an alternative proof to the Proposition in \cite{CJK}, which says that axially-symmetric cones are not minimizers in dimensions $d\leq 6$.\footnote{On the other hand, their argument has the advantage of showing $\lambda_1 \neq -\left(\frac{d-2}{2} \right)^2$ for any dimension. In particular, combined with \cite{DSJ} this implies that $U$ is \emph{strictly} stable when $d\geq 7$. Our argument does not seem to show this.}
\end{enumerate}
\end{remark}
\begin{proof}
\medskip

Using spherical coordinates as in \eqref{eq:DScone}, we can look for eigenvalues of \eqref{eq:evonlink} associated with an eigenvalue $\lambda$ separating variables. More precisely, by setting $\phi(\theta, \varphi) = f(\varphi)g(\theta)$ and exploiting the decomposition
\begin{equation}\label{eq decomplap}
    \Delta_{\mathbb{S}^{d-1}} = \frac{1}{\sin^2(\theta)} \Delta_{\mathbb{S}^{d-2}} + \frac{1}{\sin^{d-2}(\theta)} \frac{\partial}{\partial \theta} \left(\sin^{d-2}(\theta) \frac{\partial}{\partial \theta} \right),
\end{equation}
it follows that $f$ solves the eigenvalue problem
\begin{equation}\label{eq sphericalharm}
    -\Delta_{\mathbb{S}^{d-2}} f = \mu f \quad \mbox{on  $\mathbb{S}^{d-2}$,} 
\end{equation}
whereas $g$ satisfies the regular Sturm-Liouville problem
\begin{equation}\label{eq:SL}
\left\{
\begin{alignedat}{2}    
        \left(\sin^{d-2}(\theta) g' \right)'- \mu \sin^{d-4}(\theta)  g &= -\lambda \sin^{d-2}(\theta) g & \quad & \text{ in } \frac{\pi}{2} - \theta_0 < \theta <  \frac{\pi}{2} + \theta_0 \\
       g' + Hg &= 0  & \quad & \text{ when } \theta =  \frac{\pi}{2} - \theta_0 \\
       -g' + Hg &= 0  & \quad & \text{ when } \theta =  \frac{\pi}{2} + \theta_0.
\end{alignedat}
 \right.\tag{SL}
\end{equation}
By the correspondence between special harmonics in $\mathbb{S}^{d-2}$ and homogeneous harmonic polynomials in $\mathbb{R}^{d-1}$, we have that the eigenvalues of \eqref{eq sphericalharm} are given by an increasing sequence $\{\mu_j\}$ satisfying
\[ \mu_1 = 0 \quad \text{ with eigenfunction } \quad f \equiv \text{const.} \]
\[  \mu_2 = \cdots = \mu_{d} = d-2 \quad \text{ with eigenfunction } \quad f=c\varphi \cdot e_j \text{ for } j= 2, \cdots, d, \]
\[ \mu_{d+1} \geq 2(d-1). \]
By the Sturm-Liouville theory, for each $\mu = \mu_j$ fixed (to be the $j$-th eigenvalue of $\Delta_{\mathbb{S}^{d-2}}$), \eqref{eq:SL} has a discrete set of simple eigenvalues
\begin{equation}\label{eq mon SL}
    \lambda_{j,1} < \lambda_{j, 2} < \cdots < \lambda_{j,k} < \cdots \to +\infty;
\end{equation}
moreover, the corresponding eigenfunction, which we will denote by $g_{j,k}$, has exactly $(k-1)$ zeros. Additionally, by the variational characterization of eigenvalues we have that
\begin{equation}\label{def:lambdajk}
    \lambda_{j,k} = \sup_{V_k} \inf_{g\perp V_k } \mathcal{R}_{\mu_j}(g),
\end{equation}
where the supremum is taken over all $(k-1)$-dimensional linear subspaces in the weighted Sobolev space $W^{1,2}((\frac{\pi}{2}-\theta_0, \frac{\pi}{2}+\theta_0), \sin^{d-2}(\theta) d\theta )$, and $\mathcal{R}_{\mu}(g)$ denotes the Rayleigh quotient
\begin{eqnarray*}
   \mathcal{R}_{\mu}(g) &=& \dfrac{\int_{\frac{\pi}{2}-\theta_0}^{\frac{\pi}{2}+\theta_0} \sin^{d-2}(\theta)(g')^2  + \mu \sin^{d-4}(\theta)g^2 d\theta }{\int_{\frac{\pi}{2}-\theta_0}^{\frac{\pi}{2}+\theta_0}  \sin^{d-2}(\theta) g^2}\\
   &&- \dfrac{ \cos^{d-2}(\theta_0)H\left(g^2(\frac{\pi}{2}+\theta_0) + g^2(\frac{\pi}{2}-\theta_0) \right)}{\int_{\frac{\pi}{2}-\theta_0}^{\frac{\pi}{2}+\theta_0}  \sin^{d-2}(\theta) g^2}.
\end{eqnarray*}
For any pair of eigenvalues $\mu_i$ and $ \mu_j$, we have that 
\begin{equation*}
    \mathcal{R}_{\mu_j}(g) - \mathcal{R}_{\mu_i}(g) = \dfrac{ (\mu_j-\mu_i) \int_{\frac{\pi}{2}-\theta_0}^{\frac{\pi}{2}+\theta_0}  \sin^{d-4}(\theta)g^2 d\theta}{\int_{\frac{\pi}{2}-\theta_0}^{\frac{\pi}{2}+\theta_0}  \sin^{d-2}(\theta) g^2 d\theta}.
\end{equation*}
On the other hand, since  $ \sin^{d-2}(\theta)<\sin^{d-4}(\theta)$ for $\theta \in (\frac{\pi}{2}-\theta_0, \frac{\pi}{2}+\theta_0) \setminus \{\frac{\pi}{2}\}$,  it follows that if $\mu_i > \mu_j$, then
\begin{equation}\label{tmp:Rayleighcp}
    \mu_j- \mu_i < \mathcal{R}_{\mu_j}(g) - \mathcal{R}_{\mu_i}(g),
\end{equation} 
for any $g\neq 0$. Hence
\begin{equation}\label{eq:monoev}   
    \mu_j - \mu_i \leq \lambda_{j,k}- \lambda_{i,k}.
\end{equation}
Moreover, we must have 
\begin{equation}\label{eq: strict}
\mu_j - \mu_i <  \lambda_{j,1}-\lambda_{i,1}  
\end{equation}
which follows by plugging in the (nonzero) minimizer $g$ for $\lambda_{j,1}$ into and then use the variational characterization  \eqref{def:lambdajk}.\\

With this preliminary discussion in mind, our aim now is to fill in the following table of eigenvalues $\lambda_{j,i}$ for \eqref{eq:SL}.
\medskip
\[
\begin{array}{c|c|c|c}
          & \mu_1 = 0 & \mu_2 = \cdots = \mu_d = d-2  & \mu_{d+1} = 2(d-1) \\
         \hline
        \lambda_{j,1} & <-(d-2)  & 0, \text{ translation in } e_j & > d-1 \\
        \hline
        \lambda_{j,2} & 0, \text{ translation in } e_1 & d-1, \text{ rotation in } e_1e_j\text{-plane} & >2(d-1) \\
        \hline
        \lambda_{j,3} & > d-1 & > d-1 & >d-1 \\
\end{array}
\]

\medskip
\noindent
$\boldsymbol{\lambda_{1,1}:}$ Recall that the first eigenspace of $\Delta_{\mathbb{S}^{d-2}}$ is simple and spanned by the constant function $1$ with corresponding eigenvalue $\mu_1 = 0$. In addition, the eigenfunction $g_{1,1}$ to \eqref{eq:SL} with $\mu = \mu_1 = 0$ does not change sign, and thus the corresponding function $u=cg_{1,1}(\theta)$ does not change sign. Therefore it must be the first eigenfunction to \eqref{eq:evonlink}, with eigenvalue $\lambda=\lambda_{1,1}$. This corresponds to the expansion and shrinking of the cone $\Omega$. 

We also remark that $\lambda_{1,1} > -\left(\frac{d-2}{2} \right)^2$ when $d\geq 7$, namely the De Silva-Jerison cone is \emph{strictly} stable. In \cite[\S 3]{CJK} the authors consider the unique (up to a constant multiple) positive solution $W$ to the differential equation in \eqref{eq:SL} with $\lambda = -\left(\frac{d-2}{2} \right)^2$ and $\mu=0$ and satisfying even symmetry. Equivalently, this means that $G(x) = r^{-\frac{d-2}{2}} W(\theta)$ is a homogeneous harmonic function in $\Omega$. Moreover, when $d\geq 7$
\[ H < \left|W'/W\right| \text{ at the boundary } \theta = \frac{\pi}{2} \pm \theta_0.\footnote{Note that the interval $[\frac{\pi}{2} - \theta_0, \frac{\pi}{2} + \theta_0 ]$ in our notation is equivalent to the interval $[\theta_0, \pi-\theta_0]$ in the notation in \cite{CJK}.} \]
Thus $g_{1,1}$ is not a constant multiple of $W$ and $\lambda_{1,1} \neq -\left(\frac{d-2}{2} \right)^2$ by the Sturm-Liouville theory.\\

To study the next eigenfunctions, we first note that for the $1$-homogeneous solution $U(x) = rU(\theta)$, their partial derivatives (which are homogeneous of degree $0$) can be expressed in polar coodinates $(r,\theta, \varphi)$ as
\begin{equation}\label{eq:pder1U}
    \partial_{e_1} U(x) = \cos(\theta) U(\theta) -\sin(\theta) U'(\theta),
\end{equation}
\begin{equation}\label{eq:pderkU}
    \partial_{e_k} U(x) = \left[\cos(\theta) U'(\theta) + \sin(\theta) U(\theta) \right] \varphi \cdot e_k.
\end{equation}
In particular, we use the following observation: For any function $g(\theta)$ on $(\frac{\pi}{2}-\theta_0, \frac{\pi}{2}+\theta_0)$, if $\hat{g}(t)$ is defined such that $\hat{g}(\cos(\theta)) = g(\theta)$, then its derivative satisfies
\[ \hat{g}'(\cos(\theta)) = g'(\theta) \left( - \frac{1}{\sin(\theta)} \right). \]

\medskip

\noindent
$\boldsymbol{\lambda_{1,2}:}$ We claim that $\lambda_{1,2}=0$ with its eigenspace spanned by $g_{1,2}=\partial_{e_1} U$, which corresponds to tranlation in the $e_1$-direction. Indeed, notice that $g=\partial_{e_1}U$ only depends on $\theta$ and is an eigenfunction of \eqref{eq:evonlink} associated with $\lambda=0$, then it satisfies the boundary conditions in \eqref{eq:SL} and thanks to \eqref{eq decomplap} we have that
\[ \left(\sin^{d-2}(\theta) g'(\theta) \right)' = 0, \]

In other words, $g_{1,2}$ satisfies the equation in \eqref{eq:SL} with $\mu=0, \lambda = 0$. On the other hand, since $\partial_{e_1} U$ only depends on $\theta$, it changes sign only once at $\theta=\frac{\pi}{2}$ on the interval $(\pi/2-\theta_0, \pi/2+\theta_0)$. Otherwise, if it vanishes for another $\theta' \neq \frac{\pi}{2}$, it must achieve a local maximum or minimum for some $\theta$ between $\frac{\pi}{2}$ and $\theta'$. This violates the maximum principle for the harmonic function $\partial_{e_1}U$ in $\Omega$. 
Therefore, by the Sturm-Liouville theory it must be the second eigenfunction $g_{1,2}$, and thus $\lambda_{1,2}$ is equal to the corresponding eigenvalue $0$.\\

We also remark that since
\[ \lambda_{2,1} - \lambda_{1,1} > \mu_2 - \mu_1 = d-2, \]
we have $\lambda_{1,1} < -(d-2)$.\\

\medskip

\noindent
$\boldsymbol{\lambda_{j,1} \text{ with }   j\geq 2:}$ Let us consider first $j \in \{2,\ldots, d\}$. In this case, $\mu_j =  d-2$ with corresponding eigenfunctions $f_j(\varphi) = \varphi\cdot e_j$. We claim that $\lambda_{j,1} = 0$ with eigenfunctions 
\begin{equation}\label{eq partialder}
    \partial_{e_j} U = \left( \cos(\theta) U'(\theta) + \sin(\theta) U(\theta) \right) f_k(\varphi)
\end{equation}
which corresponds to translation in the $e_j$-direction. This is equivalent to say that $g(\theta) = \cos(\theta) U'(\theta) + \sin(\theta) U(\theta)$ is the first eigenfunction to \eqref{eq:SL} when $\mu=\mu_j$ is fixed, and the corresponding eigenvalue is $0$. 
By the Sturm-Liouville theory, it suffices to show that $g(\theta)$ satisfies \eqref{eq:SL} with $\lambda = 0, \mu = d-2$ and that it does not change sign. The former simply follows from the fact that $\partial_{e_j} U$ is an eigenfunction of \eqref{eq:evonlink} associated with $\lambda=0$, so we can exploit the particular structure of \eqref{eq partialder} together with the identity \eqref{eq decomplap} as in the $\lambda_{1,2}$ case. To show the latter, we notice that the equation in \eqref{eq:SL} reads in this case
\begin{equation*}
    \left(\sin^{d-2}(\theta) g' \right)'= (d-2) \sin^{d-4}(\theta)  g \qquad \mbox{in $\Big(\frac{\pi}{2} - \theta_0 , \frac{\pi}{2} + \theta_0\Big)$}
\end{equation*}
from which we can readily see that $g$ can attain positive maxima or negative minima only at the boundary. On the other hand, since $U'\Big(\frac{\pi}{2}-\theta_0\Big)=- U'\Big(\frac{\pi}{2}+\theta_0\Big)$, we get by direct computation that $g\Big(\frac{\pi}{2}-\theta_0\Big)=g\Big(\frac{\pi}{2}+\theta_0\Big)$ implying that $g$ must be one signed.\\

On the other hand, if $j \geq d+1$, since $\mu_j \geq 2(d-1)$ for all $j\geq d+1$, it follows from \eqref{eq: strict} that 
\[ \lambda_{j,1} > \lambda_{d, 1}+ \mu_j-\mu_d\geq 2d-1-(d-2) = d-1. \]

$\boldsymbol{\lambda_{j,2} \text{ with }  j\geq 2:}$ Let $A_j$ denote the skew-symmetric matrix which has only two non-zero entries $a_{1j}=1$ and $a_{j1}=-1$. It is the infinitesimal generator of the rotation in the $e_1e_j$-plane, namely %$\exp(tA_k)$ maps $(r\cos(\theta), r\sin(\theta)\varphi)$ to 
%\[ (\cos(t) r\cos(\theta) + \sin(t)r\sin(\theta) \varphi_k, r\sin(\theta)\varphi_2, \cdots, -\sin(t)r\cos(\theta) + \cos(t)r\sin(\theta) \varphi_k, \cdots, r\sin(\theta) \varphi_d) \]
\[ \exp(tA_j): \begin{pmatrix}
r\cos(\theta) \\
r\sin(\theta)\varphi 
\end{pmatrix} \mapsto \begin{pmatrix}
    \cos(t) \cdot r\cos(\theta) + \sin(t)\cdot r\sin(\theta) \varphi_j \\ r\sin(\theta)\varphi_2 \\ \vdots \\ -\sin(t) \cdot r\cos(\theta) + \cos(t)\cdot r\sin(\theta) \varphi_j \\ \vdots \\ r\sin(\theta) \varphi_d
\end{pmatrix}.  \]
Moreover, by \eqref{eq:pder1U} and \eqref{eq:pderkU} we have
\begin{align*}
    \frac{d}{dt}\Big|_{t=0} U\left(\exp(tA_j) x \right) = \langle \nabla U(x), A_j x\rangle & = \partial_{e_1}U \cdot r\sin(\theta) \varphi_j + \partial_{e_j}U \cdot (-r\cos(\theta)) \\
    & = -rU'(\theta) \varphi_j.
\end{align*}

We claim that $\lambda_{j,2} = d-1$ with eigenfunctions given by (a multiple of) $\phi_j(\theta,\varphi)=U'(\theta) \varphi_j$. This follows from noticing that $U'$ is a solution of \eqref{eq:SL} with $\mu=d-2, \lambda = d-1$. Indeed, we show the latter observing that since $\Delta U = 0$, $U$ is $1$-homogeneous, and $U$ only depends on $\theta$, we can deduce from \eqref{eq decomplap} that
\[ \left(\sin^{d-2}(\theta) U' \right)' = -(d-1) \sin^{d-2}(\theta) U, \]
or equivalently,
\begin{equation}\label{tmp:U}
    U^{''} + \frac{d-2}{\tan(\theta)} U' = -(d-1)U. 
\end{equation} 
Differentiating \eqref{tmp:U} on both sides with respect to $\theta$ yields
\begin{equation*}
    \sin^2(\theta) U^{'''} + (d-2)\sin(\theta)\cos(\theta) U^{''} = \left( (d-2)-(d-1)\sin^2(\theta) \right) U', 
\end{equation*} 
which can be rewritten as
\begin{equation*}
  \left(\sin^{d-2}(\theta) U'' \right)' -(d-2)\sin^{d-4}U'  = -(d-1)\sin^{d-2} U', 
\end{equation*} 
which shows that $U'$ satisfies the interior equation in \eqref{eq:SL}, while the boundary conditions are satisfied since $U'(\theta)\varphi_j$ is the infinitesimal generator of a translation.\\

On the other hand, we have shown that $\partial_{e_1} U$ only vanishes when $\theta = \frac{\pi}{2}$, and thus
\[ \partial_{e_1} U = \cos(\theta) U(\theta) - \sin(\theta) U'(\theta) < 0 \quad \text{ in } \frac{\pi}{2}-\theta_0 \leq \theta < \frac{\pi}{2}.  \]
Since $U \geq 0$, this is only possible if $U'(\theta) > 0$. Combined with the symmetry across $\theta=\frac{\pi}{2}$, it follows that $U'(\theta)$ vanishes only at $\theta = \frac{\pi}{2}$. Again by the Sturm-Liouville theory, for $\mu= d-2$ fixed, $U'(\theta)$ is the second eigenfunction to \eqref{eq:SL}, and thus $\lambda_{j,2}=d-1$ for $j=2,\cdots, d$.\\

Lastly, by \eqref{eq:monoev} $\lambda_{j,2} \geq \lambda_{d,2}+(d-1)=2(d-1)$ for $j\geq d+1$.

\medskip

Thanks to the previous considerations together with the monotonicity properties \eqref{eq mon SL} and \eqref{eq:monoev} it suffices to show that $\lambda_{1,3}>d-1$ to finish the proof.\\

\medskip

\noindent $\boldsymbol{\lambda_{1,3}:}$  By Sturm-Liouville theory, the corresponding eigenfunction $g_{1,3}(\theta)$ has exactly two zeroes in $(\frac{\pi}{2}-\theta_0, \frac{\pi}{2}+\theta_0)$, which we denote by $\theta_1<\theta_2$. In particular, within the middle band $(\theta_1, \theta_2)$, $g_{1,3}$ does not change sign, and thus $\lambda_{1,3}$ is the first eigenvalue to the Dirichlet boundary value problem
\[ \left\{\begin{array}{ll}
    \left(\sin^{d-2}(\theta) g' \right)' = -\lambda \sin^{d-2}(\theta) g & \text{ in } \theta_1  < \theta <  \theta_2 \\
    g = 0  & \text{ when } \theta =  \theta_1, \theta_2. \\
\end{array} \right. \]
Recall that $U(\theta)$, the $1$-homogeneous solution restricted to $\mathbb{S}^d$, satisfies the following Dirichlet boundary value problem
\[ \left\{\begin{array}{ll}
    \left(\sin^{d-2}(\theta) U' \right)' = -(d-1) \sin^{d-2}(\theta) U & \text{ in } \frac{\pi}{2}-\theta_0  < \theta <  \frac{\pi}{2}+\theta_0 \\
    U = 0  & \text{ when } \theta =  \frac{\pi}{2}\pm \theta_0. \\
\end{array} \right. \]
Since $U$ is positive in the domain, it must be the first eigenfunction, and thus the first \emph{Dirichlet} eigenvalue to the above weighted Laplacian in $(\frac{\pi}{2}-\theta_0, \frac{\pi}{2}+\theta_0)$ is $(d-1)$. Since $(\theta_1, \theta_2) \subsetneq (\frac{\pi}{2}-\theta_0, \frac{\pi}{2}+\theta_0)$, by comparison of the first Dirichlet eigenvalue we have $\lambda_{1,3} > d-1$.

\end{proof}

\section{Qualitative properties of solutions close to regular cones}\label{sec: conv}

We begin this section by defining our class of well-behaved weak solutions (following \cite[Definition 5.1]{AltCaf} and \cite[Theorem 5.1]{Weiss}).

\begin{definition}\label{d:weaksol}
    $u$ is a non-degenerate weak solution of the one-phase Bernoulli problem in $\mathbb R^d$ if:
    \begin{itemize}
        \item $u\in H^1_{loc}(\mathbb R^d)\cap C(\mathbb R^d)$. 
        \item $u\geq 0$, and $\{u > 0\}$ is a set of locally finite perimeter.
        \item $\Delta u = \mathcal H^{d-1}|_{\partial \{u > 0\}}$ in the sense of distributions i.e. $$-\int \nabla u \cdot \nabla \eta = \int_{\partial_*\{u > 0\}} \eta \, d\mathcal H^{d-1},\, \forall \eta \in C_c^\infty(\mathbb R^d).$$
        \item $u$ is non-degenerate and regular, i.e. there exists constants $c_1, c_2 > 0$ uch that for any $x_0\in \partial \{u > 0\}$ and $r > 0$ we have 
        \begin{equation}\label{eq:nondeg/reg}
        c_1 \leq \frac{1}{r}\fint_{\partial B_r(x_0)} u\, d\mathcal H^{d-1} \leq c_2.     
        \end{equation}
       
        \item Abundance of the zero set. There exists a constant $c_3 \in (0,1)$ such that for any $x_0\in \partial \{u > 0\}$ and $r>0$ we have $$\frac{|\{u=0\}\cap B_r(x_0)|}{|B_r(x_0)|} \geq c_3.$$
    \end{itemize}
\end{definition}

\begin{remark}\label{r:abundance}
    We believe it is possible to drop the assumption on the abundance of the zero set. This condition is only used to ensure the validity of the monotonicity formula; however, in every instance where the formula is applied, the weak solution under consideration is already known to be sufficiently close to a classical solution whose zero set is abundant. We do not pursue this direction—carefully revisiting each step of the proof to remove the assumption— to favor clarity over generality.

    More generally, it seems likely that our results apply to any class of weak solutions which satisfy a monotonicity formula, a flat implies smooth property and such that the convergence in $L^2$ implies the convergence of the free boundaries in the Hausdorff distance sense.
\end{remark}
We note that all minimizers satisfy the conditions on Definition \ref{d:weaksol}, see \cite[Section 1]{CafSalsa}, but so do many other non-minimizing solutions. Additionally, thanks to \cite[Theorem 5.1]{Weiss}, we have that the boundary adjusted energy
\begin{equation}\label{eq weissenergy}
    W(u,r)= \frac{1}{r^d}\int_{B_r} |\nabla u|^2+ \chi_{\{u>0\}}- \frac{1}{r^{d+1}}\int_{\partial B_r} u^2 d\mathcal{H}^{d-1},
\end{equation}
satisfies
\begin{equation}\label{eq weissder}
   \frac{d}{dr} W(u,r) =\frac{2}{r^{d+2}} \int_{\partial B_r}(x\cdot \nabla u - u)^2\, d\mathcal H^{d-1}.
\end{equation}

In preparation for our quantitative analysis of solutions asymptotic to regular homogeneous solutions, we show that under suitable proximity conditions between a non-degenerate weak solution $u$ and a classical homogeneous solution $U$ of the one-phase problem, we can be extended harmonically $u$ to a set (locally) containing $\{U>0\}$. Given $r>s>0$ we denote $A_{s,r} = B_r \setminus \overline{B_s}$ and $A_r := A_{\frac{1}{r},r}$.

\begin{lemma}\label{lemma: extension}
    Let $U: A_{4}\to \R_+$ be a regular cone and let $u: A_4\to \R_+$ be a non-degenerate weak solution. There exists $\delta_0= \delta_0(U, c_1,c_2)>0$ such that if $\Vert u- U\Vert_{L^2(A_{4})}\leq \delta_0$, the following statements hold.
    \begin{enumerate}
        \item  $u$ is a classical solution of \eqref{e:bernoulliweak} in $A_{3}$.
        \item $\partial \{ u>0\}\cap A_{3}$ is an analytic hypersurface.
         \item  There exist unique harmonic extensions $\tilde{u}$  of $u$ and $\tilde{U}$ of $U$ that are defined in the set $\Omega_\nu \cap A_{3}$, where $\Omega_\nu$ is the cone over
         \begin{equation*}
         \{x\in \mathbb{S}^{d-1}: \dis_{\mathbb{S}^{d-1}}(x, \{U>0\}< \mu\}
         \end{equation*}
		and $\mu> \frac{1}{C(U)}$ with $C(U)$ only depending on on $U$ and $\delta_0$. Furthermore, we have that $\tilde{U}$ is $1$-homogeneous.

\item Let $w_1, w_2 \in \{u, U\}$ and let $\tilde{w}_i$ be their corresponding harmonic extensions. We have that
\begin{equation}\label{eq:distest}
   |\tilde{w}_i(x)| \leq C \text{dist}(x, \partial\{w_i>0\}) \quad \mbox{ $x \in \Omega_\nu \cap A_3$},
\end{equation}
and
\begin{equation}\label{eq:extu}
    \Vert \tilde{w}_i\Vert_{C^k(\Omega_\nu \cap A_3)}\leq C(k,U)\Vert w_i\Vert_{L^\infty (A_{4})},        
\end{equation}
for any $k\in \N$ and $i=1,2$. We also note that $\tilde{w_i} \leq 0$ in $(\Omega_\nu \setminus \{w_i\geq 0\}) \cap A_3.$
  
  % \item there exists $C=C(U, c_1,c_2)>0$ such that
  %  \begin{equation}\label{eq:ctrldiff}
  %    \Vert \tilde{u} -\tilde{U} \Vert_{L^\infty(\tilde{D})} \leq C\Vert u-\tilde{U} \Vert_{L^\infty\big( A_2\big)}.
  %   \end{equation}
         % where $\tilde{D}$ is the conical sector given by 
         % $$\tilde{D} = \Big \{ x\in  A_2 : \text{dist}\Big(x,  \{U>0\}\cap  A_2\Big) \leq \delta \Big\}$$
         % and where $\delta>0$ is a constant depending only on $U$,
   %       \item   
   %       \begin{equation}\label{eq:ctrlext}
   %      \Vert \tilde{u} \Vert_{L^\infty(\tilde{D})} \leq C  \Vert u \Vert_{L^\infty(D)},
   %  \end{equation}
   % for some $C=C(D',U, c_1,c_2)$,

   %\item Given $\e>0$ and $k\in \N$, we can take $\delta_0 = \delta_0(\e,k)$ such that $\Vert u- U\Vert_{L^2(A_{1/4, 4})}\leq \delta_0$, then  $\Vert \tilde{u}-\tilde{U}\Vert_{C^k(\tilde{D})} \leq \e$.

\item Given $\e>0$ and $k \in \N$ by choosing $\delta_0$ even smaller depending on $\varepsilon$ and $k$, we have
	\begin{equation}\label{tmp:compCk}
		\|\tilde{w}_1 - w_2\|_{C^k(\overline{\{w_2>0\}}  \cap A_3)} \leq \e. 
	\end{equation} 
\item Consider the function $v= \tilde{w_1} - w_2$ in $\{w_2>0\} \cap A_{3}$. Then   $$\partial \{w_1 > 0\}\cap A_{2} = \left\{x +  \eta(x) \nabla w_2(x)\mid x\in \partial \{w_2 > 0\}\cap A_{2}\right\},$$
   where %$\eta$ is analytic on  $\partial \{w_2 > 0\} \cap A_{\frac{1}{2},2}$ and satisfies the expansion
   \begin{equation}\label{eq expansion eta}
       \eta(x) = \frac{v(x)}{\nabla \tilde{w_1}(x)\cdot \nabla w_2(x)} + O(v(x)^2),
   \end{equation}
   is an analytic function on $\partial \{w_2 > 0\} \cap A_{2}$.
   %with $g_1(t)\leq Ct^2$. 
	Moreover, $v$ satisfies the boundary value problem
   \begin{equation}\label{eq:linaroundu}
      \left\{
\begin{alignedat}{2}  
           \Delta v &= 0, &\quad& \mbox{in $\{w_2>0\}\cap A_{2}$},\\
           \partial_\nu v + h v &= -\frac{1}{2}|\nabla v|^2+O(v^2) &\quad&  \mbox{on $\partial\{w_2>0\}\cap A_{2}$,}
\end{alignedat}
  \right.
  \end{equation}
   where $\nu = \nabla w_2$ is the unit normal vector pointing towards $\{w_2>0\}$ and 
   \begin{equation}\label{eq:h}
   		h(x) = - \dfrac{ D^2 \tilde{w}_1(x) [\nabla \tilde{w}_1(x), \nabla w_2(x)] }{\nabla \tilde{w}_1(x) \cdot \nabla w_2(x) }.
   \end{equation}

   % \item 
   
   % $$\partial \{u > 0\}\cap A_2 = \{x + \nu(x)\eta(x)\mid x\in \partial \{U > 0\}\},$$
   % where $\nu$ is the inward pointing unit normal to $\partial \{b > 0\}$ and $\eta \in C^{\omega}(\partial \{b > 0\}\backslash B_{r_1/2}(0))$
    \end{enumerate}
\end{lemma}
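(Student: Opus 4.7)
I proceed in three stages: regularity upgrades giving (1)--(2), Cauchy--Kovalevskaya construction and quantitative estimates giving (3)--(5), and the graph representation with the resulting linearized PDE giving (6). The main obstacle will be a uniform lower bound on the width of the extension neighborhood.

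\noindent\textbf{Stage 1 (regularity and extension).} The $L^2$-closeness hypothesis together with the non-degeneracy/regularity bounds \eqref{eq:nondeg/reg} and the abundance of the zero set forces Hausdorff closeness of $\partial\{u>0\}$ to $\partial\{U>0\}$ on $A_{7/2}$. Since $\partial\{U>0\}\setminus\{0\}$ is smooth, this places $u$ in the flatness regime on $A_3$, and De Silva's flat-implies-$C^{1,\alpha}$ theorem followed by hodograph--Legendre bootstrapping upgrades this to analyticity of $\partial\{u>0\}\cap A_3$, establishing (1)--(2). With both free boundaries real analytic and Cauchy data $(0,\nu)$ (value zero together with $|\nabla w|=1$ prescribing the normal derivative), the Cauchy--Kovalevskaya theorem produces unique harmonic continuations $\tilde u,\tilde U$ in an analytic tubular neighborhood. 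The width is bounded below by some $\mu=\mu(U)>0$ uniformly in $u$, once analytic norms of $\partial\{u>0\}$ are controlled by those of $\partial\{U>0\}$. Uniqueness of the extension plus the fact that $\partial\{U>0\}$ is a cone gives $1$-homogeneity of $\tilde U$: the rescaling $\tilde U(\lambda\cdot)/\lambda$ solves the same analytic Cauchy problem on $\partial\{U>0\}$. The bound $|\tilde w_i(x)|\le C\dis(x,\partial\{w_i>0\})$ and the sign assertion $\tilde w_i\le 0$ outside $\{w_i>0\}$ both follow from $\tilde w_i=0$ on the free boundary together with $\partial_\nu\tilde w_i=1$ pointing into $\{w_i>0\}$; the $C^k$ estimate \eqref{eq:extu} is interior elliptic regularity. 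For \eqref{tmp:compCk}, one first upgrades $L^2$-closeness on the common positivity region to $L^\infty$-closeness via harmonic regularity, then uses continuous dependence of the Cauchy--Kovalevskaya construction on its data.

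\noindent\textbf{Stage 2 (graph parameterization and PDE).} The $C^k$-closeness of $\partial\{w_1>0\}$ to $\partial\{w_2>0\}$ allows the former to be written as a normal graph $x\mapsto x+\eta(x)\nabla w_2(x)$ over the latter, with $\eta$ characterized by $\tilde w_1(x+\eta(x)\nabla w_2(x))=0$; since $w_2(x)=0$ on $\partial\{w_2>0\}$ gives $\tilde w_1(x)=v(x)$ there, Taylor expansion combined with the analytic implicit function theorem (applicable because $\nabla\tilde w_1\cdot\nabla w_2\approx 1$) yields \eqref{eq expansion eta}. Harmonicity of $v$ is immediate from $\Delta\tilde w_1=\Delta w_2=0$ in $\{w_2>0\}$. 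For the boundary condition, expanding $|\nabla\tilde w_1|^2$ to first order at $y=x+\eta\nabla w_2$ gives
\begin{equation*}
 1=|\nabla\tilde w_1(x)|^2+2\eta(x)\,D^2\tilde w_1(x)[\nabla\tilde w_1(x),\nabla w_2(x)]+O(\eta^2),
\end{equation*}
which, combined with the identity $|\nabla\tilde w_1|^2=|\nabla v|^2+2\nabla\tilde w_1\cdot\nabla w_2-1$ (using $|\nabla w_2|=1$) and with \eqref{eq expansion eta} for $\eta$, produces after collecting linear and quadratic terms in $v$ the Robin-type condition $\partial_\nu v+hv=-\tfrac12|\nabla v|^2+O(v^2)$ with $h$ as in \eqref{eq:h}.

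\noindent\textbf{Main obstacle.} The delicate point is extracting the uniform lower bound $\mu(U)$ on the width of the Cauchy--Kovalevskaya neighborhood, since De Silva's theorem only outputs $C^{k,\alpha}$ regularity of $\partial\{u>0\}$ whereas quantitative analytic control is required. I plan to bypass this by constructing $\tilde u$ directly as a Taylor series in the signed distance to $\partial\{u>0\}$, recursively determining its coefficients from the harmonic identity $\partial_\nu^2\tilde u=-\Delta_T\tilde u+H\partial_\nu\tilde u$ together with the Cauchy data, and bounding their growth via the Moser-type estimate of Lemma \ref{lemmaL2Linfty} applied to harmonic functions close to $U$. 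This converts smooth closeness of $u$ and $U$ into a radius of convergence depending only on $U$.
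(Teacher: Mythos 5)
Your overall scaffolding (De Silva/Alt--Caffarelli flat-implies-smooth, hodograph bootstrap to analyticity, Cauchy--Kovalevskaya extension, normal-graph parametrization and Taylor expansion of the two free-boundary conditions to obtain \eqref{eq expansion eta} and \eqref{eq:linaroundu}) matches what the paper does in Steps~1 and~4 of its proof. The homogeneity of $\tilde U$ is also fine: the paper uses unique continuation of the harmonic function $x\cdot\nabla\tilde U - \tilde U$, while you argue that the rescaling solves the same Cauchy problem; these are equivalent.

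However, there is a genuine gap in your ``Main obstacle'' paragraph, and it matters because you correctly identify this as the crux. You propose to control the extension radius by invoking the Moser-type estimate of Lemma \ref{lemmaL2Linfty}, but that lemma's proof \emph{begins} by applying Lemma \ref{lemma: extension} to obtain the boundary value problem \eqref{eq:linaroundu} for $\tilde U - u$. Using it here is circular. Relatedly, your claim that the distance bound $|\tilde w_i(x)|\le C\,\mathrm{dist}(x,\partial\{w_i>0\})$ and the sign $\tilde w_i\le 0$ outside $\{w_i>0\}$ ``both follow from'' the Cauchy data $(\tilde w_i,\partial_\nu\tilde w_i)=(0,1)$ is only true infinitesimally near the free boundary; propagating it through a uniform-width collar requires a Hessian bound which is not available a priori.

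The paper's Step~2 supplies exactly the missing argument, and it is not a radius-of-convergence estimate at all: one takes the \emph{maximal} $\nu'\le\nu$ on which $\tilde w_i$ is one-signed in $(\Omega_{\nu'}\setminus\{w_i>0\})\cap A_3$, applies the boundary Harnack-type estimate of \cite[Lemma 1.5]{CafSalsa} to get linear growth from the free boundary, combines with non-degeneracy to bound $\|\tilde w_i\|_{L^\infty(\Omega_{\nu'})}$, then uses interior elliptic estimates to bound the Hessian; the Hessian bound plus $|\nabla w_i|=1$ on the free boundary forces $\nu'\ge 1/C(U)$. This bootstrap cleanly replaces the quantitative analytic control you were worried about. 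Finally, for \eqref{tmp:compCk} the paper uses a soft contradiction--compactness argument rather than the quantitative continuous dependence of Cauchy--Kovalevskaya that you gesture at; both could work, but the compactness route avoids tracking analytic norms. You should replace your Taylor-series-in-signed-distance plan with this Harnack/interior-estimate bootstrap.
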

\begin{remark}
    %In \eqref{eq:h} the notation $\hat{b}$ represents $\frac{b}{|b|}$ for any non-zero vector $b$. 
    We remark that by the $C^k$-closeness of $\tilde{w}_1$ and $w_2$, the function $h$ above is sufficiently close to $-\partial_{\nu\nu}w_2(x)$, namely the mean curvature of $\partial \{w_2 >0\}$. Thus $h$ can be taken to be uniformly positive.
\end{remark}

\begin{proof}
\noindent {\it Step 1:} We show (1)-(3) by contradiction and compactness of non-degenerate weak solutions.

    We start by recalling that any non-degenerate weak solution $u$ is Lipschitz with $\mathrm{Lip}(u)\leq C c_2$, with $c_2$ as in \eqref{eq:nondeg/reg} -see \cite[Theorem 4.3]{AltCaf}. So, by the $L^{\infty}$-type Gagliardo-Nirenberg interpolation inequality, we have that
    %Arzela-Ascoli, every contradicting sequence has a convergent subsequence. % given $\delta_1>0$ we can assume that 
    \begin{equation}\label{eq:U}
       \Vert u-U\Vert_{L^{\infty}(A_4)} \text{ is sufficiently small}, 
    \end{equation}
    upon shrinking the upper bound of its $L^2$ norm, $\delta_0$. 
    We also recall that uniform non-degeneracy assumption \eqref{eq:nondeg/reg} together with the $L^\infty$ proximity allows us, upon shrinking $\delta_0$, to deduce that    
    \begin{equation}\label{eq;H}
          d_{H}(\partial\{u>0\}\cap D, \partial\{U>0\}\cap D) \text{ is sufficiently small},
    \end{equation}  
    where $d_H$ denotes Hausdorff distance and $D$ is an open subset such that $\overline{D}\subset \subset A_4$,  % $\overline{D'} \subset \subsetA_4$, 
    see \cite[Section 4.7]{AltCaf}.
    On the other hand, since $U$ is a classical solution of \eqref{e:bernoulliweak}, given $\e>0$ we can find $r>0$ such that for any $x_0 \in \partial \{U>0\}\cap \overline{D}$, 
    \begin{equation}\label{eq almostflat}
        (x\cdot e_{x_0} -\e)_+ \leq U(x) \leq  (x\cdot e_{x_0} +\e)_+ \text{ in } B_r(x_0),
    \end{equation}
    with $e_{x_0} \in \mathbb{S}^{d-1}$. By taking $\delta_0$ sufficiently small %in \eqref{eq:U} and \eqref{eq;H} 
    and combining it with \eqref{eq almostflat}, we deduce that 
    \begin{equation}\label{tmp:twohp}
        (x\cdot e_{x_0} -2\e)_+ \leq u(x) \leq  (x\cdot e_{x_0} +2\e)_+.
    \end{equation}
    If $\varepsilon$ is smaller than a dimensional constant, \eqref{tmp:twohp} implies by \cite[Theorem 8.1]{AltCaf} that $\partial \{u>0\} \cap D$ is a $C^{1,\alpha}$ hypersurface with $C^{1,\alpha}$ norm bounded by a constant multiple of $\e$. We can then upgrade this fact to the $C^\infty$-norms and analytic character using a now standard application of the Schauder estimate, partial hodograph transform and a bootstrapping argument, see \cite{Jerison} and \cite{KN}. We can now invoke the Cauchy-Kovalevskaya theorem to harmonically extend $U$ and $u$ across its free boundary. Notice that given the uniform bounds in all $C^k$ norms of $u$, the radius of convergence of the harmonic extension remains uniformly bounded from below with respect to $\delta_0$. It thus follows that for $\delta_0$ small enough, the resulting extensions $\tilde{U}$ and $\tilde{u}$ are defined in the region  $\Omega_\nu  \cap A_3$ % =  \{ x\in  A_3 : \text{dist}\left(x,  \{U>0\}\cap  A_3\right) \leq \mu \}$ 
    for some $\mu=\mu(U, d)>0$. We finish this step by observing that 
\[ \Delta(x\cdot \nabla \tilde{U} - \tilde{U}) = x\cdot \nabla(\Delta \tilde{U}) + \Delta \tilde{U} = 0. \]
Thus, since $x\cdot \nabla U - U \equiv 0$ by the homogeneity of $U$, its corresponding unique harmonic extension also vanishes by the unique continuation property. Thus, $\tilde{U}$ is $1$-homogeneous in $\Omega_\nu$.\\
    
\medskip

\noindent{\it Step 2:} We prove (4).

\medskip

We show first \eqref{eq:extu}. Notice that since $|\nabla w_i| =1$ on $\partial \{w_i >0\} \cap A_3$, we can find the maximal $\nu' \leq \nu$ such that $\tilde{w_i}<0$ in $(\Omega_{\nu'} \setminus \{w_i>0\})\cap A_3$. Notice that $\nu'$ could depend, in principle, on $w_i$. However, since $w_i$ is one-signed in $(\Omega_{\nu'} \setminus \{w_i>0\})\cap A_3$ and its gradient is bounded on $\partial \{w_i>0\}$, we can invoke \cite[Lemma 1.5]{CafSalsa} to conclude that $-\tilde{w_1}(x) \leq C \text{dist}(x, \partial\{w_i>0\})$ for some universal constant $C$. This latter estimate combined with the non-degeneracy of $w_i$ implies $\Vert \tilde{w_1}\Vert_{L^\infty(\Omega{\nu'} \cap A_3)}\leq C \Vert w_1 \Vert_{L^\infty (A_4)}$, which combined with the interior elliptic estimates for $\tilde{w_1}$ in $\Omega_{\nu}$ implies that the Hessian of $w_i$ is bounded by a constant $C(U)$ in $\Omega_{\nu'}$, which combined with the non-degeneracy of $w_i$ implies that $\nu' \geq \frac{1}{C(U)}$. Setting $\nu = \nu'$ gives the result.
\medskip

\noindent {\it Step 3:} We prove (5).

\medskip

Thanks to \eqref{eq:extu}and the uniform boundedness of $w_2$ in $C^k(\overline{\{w_2>0\}}  \cap A_3)$ in terms of $\delta_0$, we deduce \eqref{tmp:compCk} directly by compactness. Indeed, if the there exist $\varepsilon_0$ and $\{w_{i,m}\}_{m\in \N}$ with $i=1,2$ such that \eqref{tmp:compCk} fails for $\delta_m \to 0^+$ as $m\to \infty$. In this case, by \eqref{eq:extu}, we would have that $\tilde{w}_{i,m} \to \tilde{w}_{i,\infty}$ in $C^k(\overline{\{U>0\}}\cap  A_3)$. Since $\delta_m\to 0^+$ we get $w_{1,\infty}=w_{2,\infty}$ in $\overline{\{U>0\}}\cap  A_3$, while by taking limits in our contradiction hypothesis, we deduce $\|\tilde{w}_{1,\infty} - w_{2,\infty}\|_{C^k(\overline{\{U>0\}}  \cap A_3)} \geq \e_0$. Contradiction.\\

\medskip 

\noindent {\it Step 4:} We show (6) and (7).

\medskip

First we prove graphicality. By \eqref{eq almostflat}, \eqref{tmp:twohp}, and the improvement of flatness result of \cite{AltCaf}, there exists $r>0$ such that for any $x \in \partial \{w_2>0\} \cap A_2$ there are a hyperplane $H = e_x^{\perp}$ and two $C^\infty$ functions $\xi_1, \xi_2$ over $H$ (with sufficiently small norms), such that $\partial\{w_i>0\} \cap B_r(x)$ is the graph of $\xi_i$, for $i=1,2$. Thus we can reparametrize and write $\partial\{w_1>0\} \cap A_2$ as a graph $\eta$ over $\partial\{w_2>0\}$.

We claim that 
\begin{equation}\label{e:HDconv}
\mathrm{dist}(x, \partial \{w_2 > 0\}) \leq C \|w_1 - w_2\|_{L^\infty(A_4)}
    \end{equation}
    for all $x\in \partial \{w_1 > 0\}\cap A_2$. Thus in particular $\|\eta\|_{L^\infty}$ is sufficiently small.
%First we want to prove that if $r_0, \alpha_0$ are as above then there exist a $r_1 \geq r_0$ and constants $C_0, C > 0$ such that \begin{equation}\label{e:HDconv}\mathrm{dist}(Q, \partial \{b > 0\}) \leq C_0 \|u-b\|_{C^0(B_{5r/4}\setminus B_{r/4}) } \leq  C r^{1-\alpha_0} \end{equation}
%    for all $Q\in \partial \{u > 0\}\cap B_r\backslash B_{r/2}$ and $ r > r_1$.
Indeed, let $x \in \partial \{w_1 > 0\}\cap A_2$ such that $B_{\tau}(x) \cap \partial \{ w_2> 0\} = \emptyset$ for some radius $\tau>0$. If $w_2 > 0$ on this ball, then it must be the case, by non-degeneracy, that $w_2(x) \geq \theta \tau$ for some $\theta\in (0,1)$ depending on the dimension and the constant $c_1$ in Definition \ref{d:weaksol}. Since $w_1(x) = 0$, this implies $\tau \leq \|w_1-w_2\|_{L^\infty(A_4))}/\theta$. % violates the $L^\infty$ closeness assumption if $M$ is taken large enough. 
On the other hand, by the non-degeneracy assumption in Definition \ref{d:weaksol} there exists a point $y\in \partial B_{\tau}(x)$ such that $w_1(y) \geq \tilde{\theta}\tau$. So if $w_2 = 0$ in $B_{\tau}(x)$ we get that $\tau \leq \|w_1-w_2\|_{L^\infty(A_4)}/\tilde{\theta}$. Note that we used that $B_{\tau}(x) \subset A_4$ whenever $\tau\leq 1/4$. If this is not the case, we may apply the previous argument to $\tilde{\tau} = \min\{\tau, 1/4\}$ to obtain the same upper bound for $\tilde{\tau}$. Under the assumption that $\|w_1-w_2\|_{L^\infty(A_4)} \ll 1$, it follows that $\tau = \tilde{\tau} \leq C_0 \|w_1 - w_2\|_{L^\infty (A_4)}$. The contrapositive statement gives \eqref{e:HDconv}.

Next we show finer estimates of $\eta$. Set $v = \tilde{w}_1- w_2$ and let $x \in \partial\{w_2>0\}\cap A_2$. Since $x+\eta(x)\nabla w_2(x) \in \partial\{w_1>0\}\cap A_2$, we have
\begin{equation}\label{eq:expfFB}
    \tilde{w}_1(x+\eta(x) \nabla w_2(x)) = 0 =w_2(x)
\end{equation}
and
\begin{equation}\label{eq:expgradFB}
    |\nabla \tilde{w_1}|^2(x+\eta(x) \nabla w_2(x)) = 1 =|\nabla w_2|^2(x).
\end{equation}
By \eqref{eq:expfFB} and a Taylor expansion of $\tilde{w}_1$, we have
\begin{align}
	v(x) = \tilde{w}_1(x) -w_2(x) & = \tilde{w}_1(x) - w_1(x+\eta(x) \nabla w_2(x)) \nonumber \\
	& = \eta(x) \nabla \tilde{w}_1(x) \cdot \nabla w_2(x) + h_0(\eta(x)),\label{tmpeq:veta}
\end{align}
where $h_0(t) =O(t^2)$ with a uniform constant coming from the estimates of the $C^2$-norms in item (4).
Note that
\[ \nabla \tilde{w}_1(x) \cdot \nabla w_2(x) = 1+(\nabla \tilde{w}_1 - \nabla w_2)(x) \cdot \nabla w_2(x), \]
and thus item (5) implies
\[ 1-C_1 \e \leq \nabla \tilde{w}_1(x) \cdot \nabla w_2(x) \leq 1+C_1 \e. \]
In particular, \eqref{tmpeq:veta} implies that $v(x) \approx \eta(x)$. Therefore we may rewrite \eqref{tmpeq:veta} and obtain \eqref{eq expansion eta}. In particular, this allows us to estimate the $C^{2,\alpha}$-norms of $\eta$, once we control the $C^{2,\alpha}$-norms of $v$.

%We rewrite \eqref{eq:expfFB} as
%$$ w_2(x)-\tilde{w_1}(x) =\tilde{w_1}(x+\eta(x) \nabla w_2(x)) -\tilde{w_1}(x)$$
%and apply a Taylor expansion on the right-hand side to deduce
%\begin{equation}\label{eq expansionv}
%  -\tilde{w_1}(x)=  v(x) =  \nabla \tilde{w_1}(x) \cdot \nabla w_2(x)\eta(x) +h_0(\eta(x)),
%\end{equation}
%with $h_0(t)\leq C t^2$. {\color{blue}(Zihui: If we want to express $\eta(x)$ in terms of $v(x)$, wouldn't $\nabla v(x)$ also appear because of the term $\nabla \tilde{w_1}(x) \cdot \nabla w_2(x)$?)} From here we deduce \eqref{eq expansion eta}. 

Arguing analogously for \eqref{eq:expgradFB} we deduce that
\begin{align}
	& |\nabla \tilde{w}_1|^2(x) - |\nabla w_2|^2(x) \nonumber \\
	 & = |\nabla \tilde{w}_1|^2(x) - |\nabla w_1|^2 (x+\eta(x)\nabla w_2(x)) \nonumber \\
	 & = |\nabla \tilde{w}_1|^2(x) - |\nabla \tilde{w}_1(x) + \eta(x) D^2\tilde{w}_1(x)(\nabla w_2(x)) + h_1(\eta(x)) |^2  \nonumber \\
	 & = 2\eta(x) D^2 \tilde{w}_1(x)[ \nabla \tilde{w}_1(x), \nabla w_2(x)] + h_2(\eta(x)).
\end{align}
The left hand side above is equal to $\nabla v(x) \cdot (2\nabla w_2(x) + \nabla v(x))$. Combined we obtain
\[ \partial_{\nu} v(x) - \dfrac{ D^2 \tilde{w}_1(x) [\nabla \tilde{w}_1(x), \nabla w_2(x)] }{\nabla \tilde{w}_1(x) \cdot \nabla w_2(x) } v(x) = -\frac12 |\nabla v|^2(x) + O(v(x)^2),  \]
which is the non-homogeneous Robin boundary condition in \eqref{eq:linaroundu}.
%$$   |\nabla w_2|^2(x)-|\nabla \tilde{w_1}|^2(x) = 2 {\color{blue} D^2 \tilde{w_2}(x)} [\nabla \tilde{w_2}(x), \nabla w_1(x)]\eta + h_1(\eta),$$
%with $h(t)\leq Ct^2$. By combining the previous observation with \eqref{eq expansion eta}, we deduce
%$$ |\nabla w_2|^2(x)-|\nabla \tilde{w_1}|^2(x) = 2 D^2 \tilde{w_2}(x) [\widehat{\nabla \tilde{w_2}}(x), \widehat{\nabla w_1}(x)] {\color{blue}v}+ 2g_2(v),$$
%with $g_2(t)\leq Ct^2$. Finally, by rearranging the left hand side, we get
%\begin{equation*}\label{eq bclin}
% 2 \nabla v(x) \cdot \nabla w_2(x) =   2 D^2 \tilde{w_2}(x) [\widehat{\nabla \tilde{w_2}}(x), \widehat{\nabla w_1}(x)]v+ |\nabla v(x)|^2+ 2g_2(v),
%\end{equation*}
%which immediately yields the boundary condition in \eqref{eq:linaroundu}.
\end{proof}

With this ability to extend solutions, we are able to generalize \cite[Proposition 5.1]{DSJS}, to show that  if $u$ is sufficiently close to $U$ then the $L^\infty$ norm of their difference bounds the $C^{2,\alpha}$-norm of their difference.

\begin{lemma}\label{l:linfinitytoc2}
    Let $U$ be a one-homogeneous solution to \eqref{e:bernoulliweak} such that $\partial \{U > 0\}$ is smooth away from the origin and let $u$ be a non-degenerate weak solution in $B_8\backslash B_{1/8}$. There exists constants $\eta_0 >0, C> 1$ such that if $\|u-U\|_{L^2(A_8)} \leq \eta_0$, then \begin{equation}\label{e:linfinitytoc2}
    \begin{aligned}
\|u-\tilde{U}\|_{C^2(\overline{\{u>0\}}\cap A_2)} \leq& C\|u-\tilde{U}\|_{L^\infty(\overline{\{u>0\}}\cap A_3)}\\
\|\tilde{u}-U\|_{C^2(\overline{\{U>0\}}\cap A_2)} \leq& C\|
\tilde{u}-U\|_{L^\infty(\overline{\{U>0\}}\cap A_3)}.
\end{aligned}
    \end{equation}
\end{lemma}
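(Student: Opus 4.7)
My plan is to view $v := u-\tilde U$ (respectively $\tilde u - U$) as the solution of a linear Robin boundary-value problem whose forcing is quadratic in $v$, and to close the desired estimate via classical Schauder theory together with a smallness absorption. First, upon shrinking $\eta_0$ if necessary, the hypothesis $\|u-U\|_{L^2(A_8)}\leq\eta_0$ implies the smallness assumption in Lemma \ref{lemma: extension}, so the harmonic extensions $\tilde u, \tilde U$ exist on $\Omega_\nu\cap A_3$, the free boundary $\partial\{u>0\}\cap A_3$ is analytic by item (2), and $v$ satisfies the equation \eqref{eq:linaroundu} on $\{u>0\}\cap A_2$. By \eqref{tmp:compCk}, we additionally have $\|v\|_{C^{2,\alpha}(\overline{\{u>0\}}\cap A_3)}\leq\varepsilon$ with $\varepsilon$ arbitrarily small; and by the remark following Lemma \ref{lemma: extension}, the coefficient $h$ in \eqref{eq:h} is analytic and uniformly positive, being close to the (strictly positive) mean curvature of $\partial\{U>0\}$.

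Next, I apply a Schauder estimate for this linear Robin problem. Covering $\overline{\{u>0\}}\cap A_2$ by finitely many balls of a fixed small radius, on balls that do not meet $\partial\{u>0\}$ I use interior elliptic regularity for the harmonic function $v$, while on balls centered at free-boundary points I straighten $\partial\{u>0\}$ via the analytic diffeomorphism afforded by item (2) and invoke the boundary Schauder estimate for the flat-boundary Robin problem recorded in the \nameref{sec: appendix}. Summing the local estimates and treating the right-hand side of \eqref{eq:linaroundu} as an inhomogeneous forcing $g := -\tfrac{1}{2}|\nabla v|^2 + O(v^2)$ yields
\begin{equation*}
 \|v\|_{C^{2,\alpha}(\overline{\{u>0\}}\cap A_2)}\leq C\Bigl(\|v\|_{L^\infty(\overline{\{u>0\}}\cap A_3)}+\|g\|_{C^{1,\alpha}(\partial\{u>0\}\cap A_{2.5})}\Bigr),
\end{equation*}
with a constant $C$ depending only on $U$ through the analytic geometry of $\partial\{U>0\}\setminus\{0\}$ and the coercivity of $h$.

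To conclude, I absorb the nonlinear forcing using its quadratic nature. Product estimates in H\"older spaces give $\|g\|_{C^{1,\alpha}}\leq C\|v\|_{C^{2,\alpha}}^{2}\leq C\varepsilon\|v\|_{C^{2,\alpha}}$, so choosing $\eta_0$ small enough that $C\varepsilon\leq \tfrac{1}{2}$ allows me to absorb the $\|v\|_{C^{2,\alpha}}$ term on the left, yielding the first inequality in \eqref{e:linfinitytoc2} (since $C^{2,\alpha}$ controls $C^2$). The second inequality follows identically by swapping the roles of $u$ and $U$ in Lemma \ref{lemma: extension}: now $v=\tilde u - U$ solves the analogous Robin problem on $\{U>0\}\cap A_2$, and the same argument applies. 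I expect the main technical obstacle to be ensuring that the Schauder constant is uniform in $u$, which requires that both the coefficient $h$ and the free-boundary geometry of $\partial\{u>0\}$ remain in a compact family as $u$ varies; this is precisely what the quantitative $C^k$-closeness in Lemma \ref{lemma: extension}(5) provides.
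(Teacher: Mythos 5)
Your proof takes a genuinely different route from the paper's, but there is a gap in the key absorption step that needs repair.

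The paper proves this lemma by a \emph{one-sidedness reduction}: it translates $U$ by an amount proportional to $\|u-\tilde U\|_{L^\infty}$ to produce a solution $U_t$ satisfying $u\le U_t$ locally, invokes \cite[Proposition 5.1]{DSJS} (which gives exactly this $L^\infty\to C^2$ estimate for \emph{ordered} pairs of solutions), and then controls $\|\tilde U - U_t\|_{C^2}$ trivially from the size of the translation. You instead attack the problem directly via the linearized Robin problem \eqref{eq:linaroundu} and Schauder theory, which is a self-contained and in principle workable route that bypasses DSJS Proposition 5.1 entirely.

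The gap is in the absorption. You derive
\begin{equation*}
 \|v\|_{C^{2,\alpha}(\overline{\{u>0\}}\cap A_2)}\leq C\Bigl(\|v\|_{L^\infty(\overline{\{u>0\}}\cap A_3)}+\|g\|_{C^{1,\alpha}(\partial\{u>0\}\cap A_{2.5})}\Bigr)
\end{equation*}
and bound $\|g\|_{C^{1,\alpha}}\le C\varepsilon\|v\|_{C^{2,\alpha}}$, then claim to ``absorb the $\|v\|_{C^{2,\alpha}}$ term on the left.'' But the left-hand side is the $C^{2,\alpha}$ norm on $A_2$ while the right-hand term is the $C^{2,\alpha}$ norm on the strictly larger region $A_{2.5}$; these cannot be subtracted. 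Pushing the buffer region outward does not fix this (you run out of annulus in finitely many steps, and a dyadic iteration blows up because the Schauder constant on a gap of size $2^{-k}$ grows like $2^{k(2+\alpha)}$, which overwhelms the geometric gain from $(C\varepsilon)^k$). Two standard repairs exist, either of which would close your argument. (i) \emph{Weighted-norm interpolation} (Simon's trick, or cf.\ \cite[\S 6.8]{GilTrud}): introduce $M:=\sup_{t\in[2,3)}\bigl[\|v\|_{L^\infty(A_t)}+(3-t)\|\nabla v\|_{L^\infty(A_t)}+(3-t)^2\|D^2v\|_{L^\infty(A_t)}+(3-t)^{2+\alpha}[D^2v]_{C^\alpha(A_t)}\bigr]$, apply the scaled Schauder estimate on $A_t\subset A_{(t+3)/2}$, and use the a~priori smallness $\|v\|_{C^{2,\alpha}(A_4)}\leq\varepsilon$ to show $M\le C\|v\|_{L^\infty(A_3)}+C\varepsilon M$; absorbing then works because both sides are the same quantity $M$. (ii) \emph{Coefficient absorption}: rewrite the boundary condition $\partial_\nu v + hv = -\tfrac12|\nabla v|^2 + q(x)v^2$ as the \emph{homogeneous} oblique-derivative condition $(\nu+\tfrac12\nabla v)\cdot\nabla v + (h-qv)v=0$; since $\|\nabla v\|_{C^{1,\alpha}}$ and $\|v\|_{C^{1,\alpha}}$ are small, this is a small $C^{1,\alpha}$ perturbation of the $U$-coefficients, and the linear Schauder estimate with no inhomogeneity gives the conclusion in one step with no absorption needed.

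Two smaller points. First, the \nameref{sec: appendix} contains an $L^2$-to-$L^\infty$ Moser estimate (Lemma \ref{lemma lib}) but \emph{not} a Schauder estimate for the Robin/oblique problem, so you cannot cite it as you do; you would need to reference, e.g., \cite{Lieberman} or \cite{GilTrud} directly. Second, when you claim uniformity of the Schauder constant in $u$, you should note that Lemma \ref{lemma: extension}(5) gives $C^k$-closeness of the \emph{domains and coefficients} (not just the solutions), which is precisely what makes the oblique-derivative Schauder constant uniform.
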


\begin{proof}
    Arguing as above we have that $\partial \{u > 0\}$ can be written as a smooth graph over $\partial \{U > 0\}$ inside of $A_{7, 1/7}$.  Since $\partial \{U > 0\}$ is smooth, there exists some $\delta_0 >0$ such that for any $x_0 \in A_{7, 1/7}$ there is a direction $e\in \mathbb S^{d-1}$ such that $$((x-x_0)\cdot e -\delta_0 \epsilon_1 )_+ \leq u(x), U(x) \leq ((x-x_0)\cdot e +\delta_0 \epsilon_1 )_+, \qquad \forall x\in B_{2\delta_0}(x_0).$$ Above, $\epsilon_1$ is a universal constant to be chosen later (proportional to the constant in \cite[Proposition 5.1]{DSJS}). 

    It follows that in $B_{\delta_0}(x_0)$ we have $\|u\|_{C^3(\overline{\{u > 0\}})}, \|U\|_{C^3(\overline{\{U > 0\}})} \leq C$ (which depends on $\delta_0$) and that there is a small $c\in (0,1)$ such that $$1-c \leq e\cdot \nabla u, e\cdot \nabla U \leq 1+c,$$ wherever the gradients are supported. Perhaps slightly enlarging $c$ (and shrinking $\delta_0$), we can guarantee that these estimates also hold for $\tilde{u}, \tilde{U}$. Let us focus on the case where we compare $u$ and $\tilde{U}$, as the other case goes analogously. 

    Let $U_t(x) \equiv U(x + C\|u-\tilde{U}\|_{L^\infty(B_{2\delta_0}(x_0))}e)$, for some $C > 1$ large enough. Note that $U_t$ is also a classical solution to \eqref{e:bernoulliweak}. By the gradient estimates above we have
  $$((x-x_0)\cdot e -C\delta_0 \epsilon_1 )_+ \leq u(x)\leq U_t(x) \leq ((x-x_0)\cdot e +C\delta_0 \epsilon_1 )_+, \;\; \forall x\in B_{\delta_0}(x_0).$$
We can now invoke \cite[Proposition 5.1]{DSJS} to see that $$\|u-U_t\|_{C^2(B_{\delta_0/2}(x_0)\cap \overline{\{u>0\}})} \leq C\|u-U_t\|_{L^\infty(B_{\delta_0}(x_0))}.$$ Note that the $C > 0$ depends only on $\delta_0$ (which in turn depended on the $C^2$ character of $\partial \{U > 0\}$). By the Lipschitz bound on $U$ and the fact that we translated proportional to $\|u-U\|_{L^\infty}$ we immediately get \begin{equation}\label{e:c2linfinityalmost}
\|u-U_t\|_{C^2(B_{\delta_0/2}(x_0)\cap \overline{\{u>0\}})} \leq C\|u-\tilde{U}\|_{L^\infty(B_{2\delta_0}(x_0))}.
\end{equation}

With a standard covering argument we are finished with our proof once we establish $$\|\tilde{U} -U_t\|_{C^2(B_{\delta_0/2}(x_0)\cap \overline{\{u > 0\}})} \leq \|u-\tilde{U}\|_{L^\infty(B_{2\delta_0}(x_0))}.$$ However, this estimate immediately follows from the definition of $U_t$ and the previously mentioned bound $\|\tilde{U}\|_{C^3(B_{\delta_0}(x_0))} \leq C$.
\end{proof}

Now we use a Moser-type argument to show that the $L^\infty$-norm of the difference between $u$ and $\tilde{U}$ is controlled by the $L^2$ norm of their difference. In particular, there is no concentration of mass on the boundaries of $\{u>0\}$ and the annulus $A_2$.

\begin{lemma}\label{lemmaL2Linfty} 
    There exists $C=C(U, c_1,c_2)>0$ and $\delta>0$ such that if $\|u-U\|_{L^2(A_4) } \leq \delta$, then
   \begin{equation}\label{eq:ctrldiff}   
     \Vert u-\tilde{U} \Vert_{C^2\big( A_2\cap \overline{\{u>0\}}\big)}\leq C\Vert u-\tilde{U} \Vert_{L^2\big( A_3\cap \{u>0\}\big)}.
    \end{equation}
\end{lemma}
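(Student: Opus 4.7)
The plan is to reduce the $C^2$ bound to an $L^\infty$ bound using Lemma \ref{l:linfinitytoc2}, and then run a Moser iteration on the boundary value problem \eqref{eq:linaroundu} satisfied by $v := u - \tilde{U}$. Specifically, applying Lemma \ref{l:linfinitytoc2} on an intermediate annulus (say $A_{5/2}$) already gives
\begin{equation*}
\|u-\tilde{U}\|_{C^2(A_2\cap \overline{\{u>0\}})} \leq C\|u-\tilde{U}\|_{L^\infty(A_{5/2}\cap \overline{\{u>0\}})},
\end{equation*}
so what remains is the reverse H\"older type estimate $\|v\|_{L^\infty(A_{5/2}\cap \{u>0\})} \leq C\|v\|_{L^2(A_3\cap \{u>0\})}$.

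To establish the latter, I would apply item (7) of Lemma \ref{lemma: extension} with $w_1 = U$ and $w_2 = u$, which tells us that $v$ is harmonic in $\{u>0\}\cap A_3$ and satisfies
\begin{equation*}
\partial_\nu v + hv = -\tfrac12 |\nabla v|^2 + O(v^2) \quad \text{on } \partial\{u>0\}\cap A_3,
\end{equation*}
where $h$ is uniformly positive by the remark following that lemma. By item (5) of Lemma \ref{lemma: extension}, the $C^1$-norm of $v$ may be made arbitrarily small by further shrinking $\delta$, so the right-hand side of the Robin condition is bounded by $C\|v\|_{C^1}(|v|+|\nabla v|)$ and can be treated perturbatively. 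The Moser iteration then proceeds in the standard fashion: for each $q\geq 1$ and a smooth cutoff $\eta$ supported in a slightly larger annulus than the target, test the interior equation against $\eta^2 (v^+)^{2q-1}$ and integrate by parts; on the boundary $\partial\{u>0\}$ substitute the Robin identity, producing the favorable term $-\int h\eta^2(v^+)^{2q}\, d\mathcal H^{d-1}$ which absorbs the small perturbation. Combined with the Sobolev trace inequality on the analytic free boundary (which is smooth by Lemma \ref{lemma: extension}), Young's inequality, and a standard cutoff argument, one obtains a self-improving bound of the form $\|v^+\|_{L^{p\chi}} \leq C^{1/p}\|v^+\|_{L^p}$ for some $\chi>1$. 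Iterating over a dyadic sequence of shrinking annuli between $A_{5/2}$ and $A_3$ yields $\|v^+\|_{L^\infty} \leq C\|v^+\|_{L^2}$; the same argument applied to $v^-$ completes the bound for $|v|$.

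The main obstacle is the boundary quadratic nonlinearity $|\nabla v|^2$ in \eqref{eq:linaroundu}, which is not controlled by lower powers of $v$ alone. The resolution relies crucially on the Cauchy--Kovalevskaya extension from Lemma \ref{lemma: extension}: since $\tilde U$ and $u$ are both smooth on the enlarged domain $\Omega_\nu \cap A_3$ with arbitrarily small $C^1$-difference, we can bound $|\nabla v|^2 \leq \|v\|_{C^1}|\nabla v|$, and the resulting contribution to the boundary integral is absorbed into $h\eta^2 (v^+)^{2q}$ via Young's inequality after choosing $\delta$ small enough. This effectively linearizes the Robin problem and reduces the argument to a textbook oblique-derivative Moser iteration of the kind whose estimates are collected in the appendix.
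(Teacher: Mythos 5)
Your high-level strategy matches the paper's: reduce the $C^2$ bound to an $L^\infty$ bound via Lemma \ref{l:linfinitytoc2}, and then run a Moser iteration on \eqref{eq:linaroundu} to upgrade the $L^2$ bound to $L^\infty$. However, there is a sign error at the crux of your argument that undermines the absorption mechanism. Write $v=\tilde U-u$ and test the harmonic equation in $\{u>0\}$ with $\varphi^2 v_+^p$. Since $\nu$ is the \emph{inner} normal, integration by parts gives
\[
\int_{\{u>0\}} p\,\varphi^2 v_+^{p-1}|\nabla v_+|^2 + 2\int_{\{u>0\}} \varphi v_+^p \nabla v_+\cdot\nabla\varphi \;=\; -\int_{\partial\{u>0\}} (\partial_\nu v)\,\varphi^2 v_+^p,
\]
and substituting the Robin condition $\partial_\nu v + h v = -\tfrac12|\nabla v|^2+O(v^2)$ yields $-\partial_\nu v = hv+\tfrac12|\nabla v|^2+O(v^2)$, so the boundary contribution $\int_{\partial\{u>0\}} h\,\varphi^2 v_+^{p+1}$ lands on the \emph{right-hand side} with a $+$ sign. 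Since $h>0$, this is an unfavorable term to be controlled, not a negative term on the left that can absorb the quadratic perturbation --- compare \eqref{eq:dirtest} in the paper. (The boundary condition $\partial_\nu u + Hu=0$ with $H\ge 0$ corresponds to the term $-\int_{\partial\Sigma}Hw^2$ in the quadratic form \eqref{eq quadratic form}; it is non-coercive, which is exactly why $\lambda_1<0$.) Your claim that $|\nabla v|^2$ can be absorbed ``into $h\eta^2(v^+)^{2q}$ via Young's inequality'' therefore does not go through as stated.

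Beyond the sign issue, the paper handles the boundary integrals by a different device than a Sobolev trace inequality: it uses $\partial_\nu u=1$ and the harmonicity of $u$ to rewrite each boundary integral as a bulk divergence, $\int_{\partial\{u>0\}} F\,d\mathcal H^{d-1} = -\int_{\{u>0\}}\operatorname{div}(F\nabla u)\,dx$, as in \eqref{eq boundaryp}--\eqref{eq boundarygrad}. This introduces only bounded factors ($\nabla u$, $D^2v$) and powers of $v_+$ and $|\nabla v_+|$ that close cleanly under Young's inequality, and the quadratic gradient nonlinearity is ultimately absorbed into the left-hand gradient term by using the smallness $|\nabla v|\le\delta_0$ (see \eqref{eq:y3}), not by any favorable sign of $h$. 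Your trace-inequality route could in principle also be made to work, but the differentiation of $|\nabla v|\,v_+^p\varphi^2$ in the trace bound produces terms such as $|D^2v|\,v_+^p\varphi^2$ and $p|\nabla v_+|^3 v_+^{p-2}\varphi^2$ whose powers do not match the iteration without substantial extra care, whereas the $\partial_\nu u=1$ trick avoids these entirely. You should redo the sign bookkeeping and replace the claimed absorption into $h$ with absorption into the interior gradient term, as the paper does.
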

\begin{remark}
	The estimate proven here is close in spirit to the standard  $L^2-L^\infty$ estimate, which we state in Lemma \ref{lemma lib} in the Appendix. However, we can not directly appeal to Lemma \ref{lemma lib}, because the boundary condition for $v= \tilde{U} - u$ depends on $v$ and $|\nabla v|^2$, see \eqref{eq:linaroundu}. In fact, the main difficulty of Lemma \ref{lemmaL2Linfty} lies in estimating these boundary terms.
\end{remark}

\begin{proof}
In virtue of Lemma \ref{l:linfinitytoc2} it suffices to show
  \begin{equation}\label{eq:ctrldiffinf}   
     \Vert u-\tilde{U} \Vert_{L^\infty\big( A_2\cap \{u>0\}\big)}\leq C\Vert u-\tilde{U} \Vert_{L^2\big( A_3\cap \{u>0\}\big)}.
    \end{equation}

% Set $v=u-\tilde{U}$ and $\Omega = \{u>0\}$. Without loss of generality, we  can assume that Indeed, by the arguments in Lemma \ref{lemma: extension}, if $\Vert u-\tilde{U} \Vert_{L^2\big( A_8\big)}\to 0$, then $\Vert u-\tilde{U} \Vert_{L^\infty\big( A_4\big)} \to 0$ and the free boundary of $u$ converges as graph in $C^k$ to the free boundary of $U$, whereas if $\Vert u-\tilde{U} \Vert_{L^2\big( A_8\big)}$ remains bounded from below the inequality is trivially satisfied by the uniform boundedness of $u$ and $\tilde{U}$ in $A_8$.\\

By our smallness assumption $\|u-U\|_{L^2(A_4) } \leq \delta$, upon taking $\delta$ sufficiently small, we can invoke Lemma \ref{lemma: extension} to guarantee that $v=\tilde{U}-u$ satisfies the boundary value problem \eqref{eq:linaroundu} in the domain $\Omega:=\{u>0\}$ and that $\Vert u-\tilde{U} \Vert_{C^2\big( A_4\big)} \leq \delta_0$ with $\delta_0 \to 0^+$ as $\delta\to 0^+$. Let $p\geq 1$ and  $\varphi \in C_c^1(A_4)$ with $\varphi \geq 0$ and $\varphi =1$ in $A_2$.  Testing \eqref{eq:linaroundu} with $\varphi^2 v_+^p$ yields
\begin{eqnarray}\notag
    \int_{ \Omega} p|\nabla v_+|^2 v_+^{p-1}\varphi^2  &\leq&   -2\int_{\Omega}  v_+^p\varphi\nabla v_+\cdot\nabla \varphi \\ \notag
   &&+ \int_{\partial \Omega} v_+^{p+1} \varphi^2(Cv_++ h)\\ \label{eq:dirtest}
   &&+ C\int_{\partial \Omega} v_+^{p} |\nabla v|^2 \varphi^2.
\end{eqnarray}
We remark that in the last term involving $|\nabla v|^2$, we do not use the sign of $\frac12 |\nabla v|^2$ in the boundary condition of \eqref{eq:linaroundu}. Hence we could apply the same reasoning to $-v$ later, to obtain analogous estimates for $v_-$.
%is actually negative, but we keep it nonetheless since we would apply later the same reasoning for $-v$ when the sign is not in our favor.\\

From Young's inequality we have that
$$ 2v_+^p\varphi|\nabla v_+\cdot\nabla \varphi| \leq \frac{p}{2}|\nabla v_+|^2 v_+^{p-1}\varphi^2 + \frac{2}{p} |\nabla \varphi|^2 v_+^{1+p}$$
which combined with \eqref{eq:dirtest} imply
\begin{eqnarray*}\notag
    \int_{\Omega} \frac{p}{2}|\nabla v_+|^2 v_+^{p-1}\varphi^2  &\leq&   \frac{2}{p}\int_{\Omega}  |\nabla \varphi|^2 v_+^{1+p} \\ \notag
   &&+ \int_{\partial \Omega} v_+^{p+1} \varphi^2(Cv_++ h)\\ %\label{eq:dirtest}
   &&+ C\int_{\partial \Omega} v_+^{p} |\nabla v|^2 \varphi^2.
\end{eqnarray*}
Combining the previous inequality with the uniform boundedness of $v_+$ and $h$ and the identity $\left|\nabla 
\left(  v_+^\frac{1+p}{2}\right)\right|^2 = \Big(\frac{p+1}{2}\Big)^2|\nabla v_+|^2 v_+^{p-1}$, we get (recall that $p \geq 1$)
\begin{eqnarray}\notag
    \int_{ \Omega} \left|\nabla 
\left(  v_+^\frac{1+p}{2}\right)\right|^2\varphi^2   &\leq& C \left(\frac{1+p}{p}\right)^2 \int_{\Omega}  |\nabla \varphi|^2 v_+^{1+p}\\ \label{eq:moser2.5}
&&+Cp \int_{\partial \Omega} \left( \varphi^2 v_+^{p+1} + \varphi^2 v_+^p |\nabla v|^2 \right) %(v_+ + |\nabla v|^2) 
\end{eqnarray}
where $C>0$ is a constant depending on $U$. 

We handle now the boundary terms. For this we use the harmonicity of $u$ together with its boundary condition $\partial_\nu u = 1$ and the divergence theorem to deduce
\begin{eqnarray}\notag
    p\int_{\partial \Omega} \varphi^2v_+^{p+1}&=& p\int_{\partial \Omega} \varphi^2v_+^{p+1} \partial_\nu u\\ \notag
    &=& -p\int_{\Omega} \text{div}\left(  \varphi^2v_+^{p+1} \nabla u\right)\\ \label{eq boundaryp}
    &\leq& Cp \int_{\Omega} \left( p |\nabla v_+| v_+^p\varphi^2+ \varphi v_+^{p+1}|\nabla \varphi| \right).
\end{eqnarray}
Similarly,
\begin{align}
	& p\int_{\partial \Omega} \varphi^2v_+^{p}|\nabla v|^2 \notag \\
    & = -p\int_{ \Omega} \text{div}\left( \varphi^2v_+^{p}|\nabla v|^2 \nabla u\right) \notag \\ 
    & \leq Cp \int_{\Omega} \left( p\varphi^2 v_+^{p-1} |\nabla v|^3 + \varphi^2 v_+^p |\nabla v| |D^2 v|  + \varphi v_+^p |\nabla v|^2 |\nabla \varphi| \right) \notag \\
    & \leq Cp \int_{\Omega} \left( p\varphi^2 v_+^{p-1} |\nabla v|^3 + \varphi^2 v_+^p |\nabla v| + \varphi v_+^p |\nabla v|^2 |\nabla \varphi| \right) \label{eq boundarygrad} % \\
 %   &\leq& Cp\int_{\Omega} \varphi^2|\nabla v_+|v_+^{p-1}( (p+1)|\nabla v_+|^2 + 2|D^2 v| v_+) \\\label{eq boundarygrad}
 %  && + Cp\int_{  \Omega}  2\varphi v_+^{p}|\nabla \varphi||\nabla v|^2\\\notag
 %   &\leq &  Cp\int_{ \Omega} \varphi|\nabla v_+|v_+^{p-1}( p\varphi|\nabla v_+|^2 + \varphi v_++v_+|\nabla \varphi||\nabla v|),
\end{align}
where we have used the uniform bounds in $|\nabla u|$ and $|D^2 v|$ guaranteed by Lemma \ref{lemma: extension}. 

Our next goal is to reabsorb the gradients terms introduced by the bounds \eqref{eq boundaryp} and \eqref{eq boundarygrad}. Starting with  \eqref{eq boundaryp}, we use Young's inequality to obtain the bound
\begin{eqnarray}\notag
    Cp^2 |\nabla v_+| v_+^p\varphi^2 &=& C\frac{2p^2}{p+1}\left|\nabla  \left(v_+^\frac{p+1}{2}\right)\right|v_+^\frac{p+1}{2}\varphi^2\\\label{eq:y1}
    &\leq& \frac{1}{4}\left|\nabla  \left(v_+^\frac{p+1}{2}\right)\right|^2\varphi^2+Cp^2v_+^{p+1}\varphi^2.
\end{eqnarray} 
In the estimate \eqref{eq boundarygrad} we find three gradient terms. For the second and third terms, we can use the Young's inequality as before (as well as the uniform boundedness of $|\nabla v|$) to get 
\begin{align}
	Cp \, \varphi^2 v_+^p |\nabla v| & = C \frac{2p}{p+1} \varphi^2 \left|\nabla \left(v_+^{\frac{p+1}{2}} \right) \right| v_+^{\frac{p+1}{2}} \notag \\
	& \leq \frac14 \left|\nabla \left(v_+^{\frac{p+1}{2}} \right) \right|^2 \varphi^2 + C v_+^{p+1} \varphi^2,\label{eq:y20}
\end{align}
and
\begin{align}
	Cp \, \varphi|\nabla v_+|^2v_+^{p}|\nabla \varphi| & =   C\frac{2p}{p+1}\left|\nabla  \left(v_+^\frac{p+1}{2}\right)\right||\nabla v_+| v_+^\frac{p+1}{2}\varphi |\nabla \varphi| \notag \\ 
   &\leq \frac{1}{4}\left|\nabla \left(v_+^\frac{p+1}{2}\right)\right|^2\varphi^2+Cv_+^{p+1}|\nabla \varphi|^2.\label{eq:y2}
\end{align}
The first term in \eqref{eq boundarygrad}, namely $  p^2\varphi^2 |\nabla v_+|^3v_+^{p-1}$, can be bounded using the smallness assumption $|\nabla v|\leq \delta_0$ as follows
\begin{equation}\label{eq:y3}
    Cp^2|\nabla v_+|^3v_+^{p-1} \varphi^2 
    \leq C\delta_0 \left|\nabla  \left(v_+^\frac{p+1}{2}\right)\right|^2\varphi^2\leq \frac{1}{4}  \left|\nabla  \left(v_+^\frac{p+1}{2}\right)\right|^2\varphi^2,
\end{equation} 
where $\delta_0$ is taken so that $C\delta_0 \leq \frac{1}{4}$. We remark that the estimates \eqref{eq:y20}, \eqref{eq:y3} above use in a crucial way the fact that the boundary condition for $v$ depends \emph{quadratically} on $|\nabla v|$ in \eqref{eq:linaroundu}.
Lastly, observe that
\begin{equation}\label{eq:gradtogether}
    \left|\nabla 
\left( \varphi  v_+^\frac{1+p}{2}\right)\right|^2 \leq 2|\nabla \varphi|^2 v_+^{1+p} +  2\left|\nabla 
\left(  v_+^\frac{1+p}{2}\right)\right|^2\varphi^2.
\end{equation}
Therefore combining \eqref{eq:moser2.5}, \eqref{eq boundaryp}, \eqref{eq boundarygrad}, \eqref{eq:y1}, \eqref{eq:y20}, \eqref{eq:y2}, \eqref{eq:y3}, and \eqref{eq:gradtogether} we obtain the gradient estimate
\begin{equation}\label{eq:moser3}
    \int_{\Omega} \left|\nabla 
\left( \varphi v_+^\frac{1+p}{2}\right)\right|^2  \leq C \int_{\Omega} (|\nabla \varphi|^2+p \, \varphi|\nabla \varphi| +p^2\varphi^2) v_+^{1+p}
\end{equation}

On the other hand, since $\Omega$ has regular boundary in $A_4$, we can apply the Sobolev inequality to deduce 
\begin{equation}\label{eq:moser3}
   \left( \int_{\Omega} 
\left( \varphi^2 v_+^{p+1}\right)^{\kappa} \right)^\frac{1}{\kappa} \leq C \int_{\Omega} (|\nabla \varphi|^2+p\varphi|\nabla \varphi| +p^2\varphi^2) v_+^{1+p},
\end{equation}
with the exponent $\kappa \in \left(1, \min\{\frac{d}{d-2},2\} \right)$ fixed. Take $2\leq r<s\leq4$ and take $\varphi$ supported in $A_{s}$ with $\varphi \geq 0$ and $\varphi =1$ in $A_{r}$ and, furthermore, with $|\nabla \varphi|\leq \frac{C}{s-r}$. Under this choice of test function and setting $\xi =p+1\geq 2$ we can deduce from \eqref{eq:moser3} that
\begin{equation*}\label{eq:moser4}
   \left( \int_{A_{r}\cap \Omega} 
  v_+^{\xi\kappa} \right)^\frac{1}{\kappa} \leq \frac{C}{(s-r)^2} \int_{A_{s}\cap  \Omega} (1+\xi(s-r) +\xi^2(s-r)^2) v_+^{\xi}.
\end{equation*}
We define a series of powers $\xi_i = 2\kappa^{i-1}$ and radii  $r_i = 2 + \sum_{j=1}^{i-1} 2^{-j}$ from which we get the recurrence
\begin{equation}\label{eq:recurrence}
   \left( \int_{A_{r_i}\cap \Omega} 
  v_+^{\xi_{i+1}} \right)^\frac{1}{\kappa} \leq 4^i C \int_{A_{r_{i+1}}\cap  \Omega}  v_+^{\xi_i}.
\end{equation}
Here we have used the choice $\kappa<2$, which implies that $\xi_i(r_{i+1}-r_i) = \frac{2}{\kappa} (\frac{\kappa}{2})^i $ is bounded independent of $i$.
\eqref{eq:recurrence} can be rewritten as
\begin{equation}\label{tmp:moser4}
   \Vert v_+\Vert_{L^{\xi_{i+1}}\left( A_{r_i}\right)} \leq  \left(4^i C \right)^\frac{1}{\xi_i} \Vert v_+\Vert_{L^{\xi_{i}}\left( A_{r_{i+1}}\right)}.
\end{equation}
Iterating \eqref{tmp:moser4} and taking limit as $i\to \infty$ finally yields
\begin{equation*}\label{eq:moser4}
 \Vert v_+\Vert_{L^{\infty}\left( A_2\right)} \leq  C \Vert v_+\Vert_{L^{2}\left( A_3\right)}.
\end{equation*}

We obtain the corresponding inequality for $v_-$ by noticing that $-v$ satisfies an equation analogous to \eqref{eq:linaroundu}. 

\end{proof}

We end the section with two more estimates, both of which will be useful for us going forward. In the first we show that quantitative $C^2$ control on the difference between $u$ and $U$ implies quantitative $C^2$ control on the distance between the free boundaries. 

\begin{lemma}\label{l:c2oneta}
    Let $u, U$ be as above and assume that $\|u-U\|_{L^2(A_8)} \leq \delta_0$ for $\delta_0 > 0$ sufficiently small depending on $U$. 

    If $\partial \{u > 0\}$ is the graph of $\eta$ over $\partial \{U > 0\}$ (as above) then we can estimate $$\sup_{x\in \partial \{U > 0\}\cap A_2} |\eta(x)| + |\nabla \eta(x)| + |\nabla^2 \eta(x)|\leq C\|u-\tilde{U}\|_{C^2(\overline{\{u > 0\}}\cap A_4)},$$ where $C > 1$ depends on $U$ and $\tilde{U}$ is the analytic extension as above.

    Similarly, if $\partial \{U > 0\}$ is the graph of $\xi$ over $\partial \{u > 0\}$ (as above) then we can estimate $$\sup_{x\in \partial \{u > 0\}\cap A_2} |\xi(x)| + |\nabla \xi(x)| + |\nabla^2 \xi(x)|\leq C\|\tilde{u}-U\|_{C^2(\overline{\{U > 0\}}\cap A_4)},$$ where $C > 1$ depends on $U$ and $\tilde{u}$ is the analytic extension as above.
\end{lemma}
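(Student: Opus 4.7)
The plan is to exploit the implicit characterization $u(y(x)) = 0$, where $y(x) := x + \eta(x)\nabla U(x)$ for $x \in \partial\{U>0\} \cap A_2$, and derive both statements by a quantitative implicit function theorem argument. Splitting $u = \tilde{U} + (u - \tilde{U})$, using $\tilde{U}(x) = 0$ and $\nabla\tilde{U}(x)\cdot\nabla U(x) = |\nabla U(x)|^2 = 1$ on $\partial\{U>0\}$ (the first because $\tilde U=U=0$ on $\partial\{U>0\}$, the second because the harmonic extension $\tilde U$ is $C^1$ across this boundary with gradient equal to $\nabla U$), and Taylor expanding $\tilde{U}$ to second order along $\nabla U(x)$, yields
\begin{equation*}
0 = u(y(x)) = \eta(x) + O(\eta(x)^2) + (u-\tilde{U})(y(x)),
\end{equation*}
with an $O$-constant controlled by $\|D^2\tilde{U}\|_{L^\infty}$, itself bounded via item (4) of Lemma \ref{lemma: extension}. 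Since $\eta$ is already known to be uniformly small from \eqref{eq expansion eta} and the smallness of $\delta_0$, the quadratic term is absorbed and $|\eta(x)| \leq 2|(u-\tilde{U})(y(x))| \leq C\|u-\tilde{U}\|_{L^\infty(\overline{\{u>0\}}\cap A_4)}$.

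For the first derivative, differentiate $u(y(x)) = 0$ along a unit tangent vector field $\tau$ on $\partial\{U>0\}$:
\begin{equation*}
(\nabla_\tau \eta)(x)\,\bigl[\nabla u(y)\cdot\nabla U(x)\bigr] = -\nabla u(y)\cdot\tau - \eta(x)\,\nabla u(y)\cdot D^2U(x)\tau.
\end{equation*}
The bracketed coefficient converges uniformly to $|\nabla U|^2 = 1$ by the $C^1$ proximity of $u$ and $\tilde{U}$. Since $\nabla\tilde{U}(x) = \nabla U(x)$ is normal to $\partial\{U>0\}$ at $x$, we have $\nabla\tilde U(x)\cdot\tau=0$, so writing $\nabla u(y)\cdot\tau = \nabla(u-\tilde{U})(y)\cdot\tau + [\nabla\tilde{U}(y) - \nabla\tilde{U}(x)]\cdot\tau$ produces two summands bounded by $\|u-\tilde{U}\|_{C^1}$ and by $|\eta|\,\|\tilde U\|_{C^2}$ respectively. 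Combined with the $C^0$ estimate on $\eta$ already proved, this gives $|\nabla\eta| \leq C\|u-\tilde{U}\|_{C^1(\overline{\{u>0\}}\cap A_4)}$. One further differentiation, together with the uniform $C^2$-boundedness of $u, \tilde{U}, U$ from Lemma \ref{lemma: extension}, yields the $C^2$ estimate for $\eta$.

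The second (symmetric) estimate follows verbatim by swapping the roles of $u$ and $U$: one characterizes $\xi(y)$ by $\tilde{U}(y + \xi(y)\nabla u(y)) = 0$ for $y \in \partial\{u>0\} \cap A_2$, uses $\tilde{u}(y) = u(y) = 0$ and $|\nabla u(y)| = 1$ on $\partial\{u>0\}$, and expands with $(\tilde{u}-U)$ in place of $(u-\tilde{U})$; the analyticity of $\partial\{u>0\}$ and the uniform $C^k$ bounds on $\tilde u$ (items (2) and (4) of Lemma \ref{lemma: extension}) supply the input needed to run the same Taylor expansion and differentiation scheme. The only real subtlety to monitor is that every smallness threshold used to absorb the quadratic errors must be traceable to the single hypothesis $\|u-U\|_{L^2(A_8)} \leq \delta_0$; since the $C^2$ norms of $u,\tilde u,\tilde U,\eta,\xi$ all tend to zero uniformly with $\delta_0$ via Lemma \ref{lemma: extension}, this monitoring is purely bookkeeping rather than a true obstacle.
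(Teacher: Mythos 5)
Your proof is correct and follows essentially the same route as the paper's: differentiate the implicit relation $u(x + \eta(x)\nabla U(x)) = 0$ along tangent directions, use $\nabla\tilde U(x)\cdot\nabla U(x)=1$ and the tangentiality of $\tau$ to isolate $\nabla_\tau\eta$, and split $\nabla u(y)\cdot\tau$ into $\nabla(u-\tilde U)(y)\cdot\tau$ plus a term controlled by $|\eta|\,\|\tilde U\|_{C^2}$. The only cosmetic difference is your $C^0$ bound on $\eta$: you Taylor-expand $\tilde U$ along the normal to get $0 = \eta + O(\eta^2) + (u-\tilde U)(y)$, whereas the paper instead invokes the non-degeneracy estimate \eqref{e:HDconv}; both yield $|\eta|\lesssim\|u-\tilde U\|_{L^\infty}$, so the arguments are interchangeable.
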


\begin{proof}
 We prove the estimate for $\eta$. The estimate for $\xi$ follows mutatis mutandis.
    
    Let $v = \tilde{U}-u$, and first note that $|\eta| \lesssim \|v\|_{L^\infty}$ %v|_{\partial \{u > 0\}} \simeq \eta$ 
    by the non-degeneracy of $\tilde{U}$ (see, e.g. \eqref{e:HDconv}). This gives the $L^\infty$ bounds on $\eta$. 

We then use the fact that 
\begin{equation}\label{eq:vbc}
	u(x+\eta(x)\nu(x)) = 0, \quad \text{ for all } x\in \partial\{U>0\} \cap A_2.
\end{equation}
Let $e_1,\cdots e_{d-1}$ denote tangential unit vectors of the hypersurface $\partial\{U>0\}$ at $x$, such that $e_1, \cdots, e_{d-1}, e_d:=\nu(x) $ form an orthonormal basis.
Denote $y= y(x) = x + \eta(x) \nu(x)$. Differentiating \eqref{eq:vbc} in the $e_i$ direction (for any $i=1,\cdots,d-1$), we deduce that 
\begin{equation}\label{e:towardsetabounds} 
	0 = \partial_{e_i} u + \partial_{e_i} \eta(x) \, \nabla u \cdot \nu(x) + \eta(x) \sum_{j=1}^d \partial_{e_j} u \, \partial_{e_i} \nu^j(x).
	%(1+\eta \partial_{x_i}\nu^i)\partial_{x_i}u + \eta\partial_{x_j}u \partial_{x_i}\nu^j + \partial_{x_i}\eta \nu\cdot \nabla u
\end{equation} 
Note we are using the notation that $\nu = (\nu^1, \nu^2,\ldots, \nu^d)$ and all derivatives of $u$ are evaluated at $y\in \partial \{u > 0\}$.

By the qualitative convergence of $\partial \{u > 0\}$ to $\partial \{U > 0\}$ we have $\partial_{e_d} u(y) = \nabla u(y) \cdot \nu(x) \simeq 1$.
Since $\tilde{U}$ is one-homogeneous we have that $|\nabla \nu|(x)\leq C|x|^{-1}$ and that $|\nabla \tilde{U}|(x), |x||\nabla^2 \tilde{U}|(x) \leq C$. So we can estimate for each $j=1, \cdots, d-1$
$$\begin{aligned}
	|\partial_{e_j}u|(y) \leq& |\partial_{e_j} \tilde{U}|(y) + |\nabla v|(y) \\ \leq& |\eta(x)| \sup_{z\in B_{2\eta(x)}(y)} \|\nabla^2 \tilde{U}\|(z) +|\partial_{e_j} U|(x)+|\nabla v|(y) \\\leq& C|x|^{-1}|\eta|(x) + |\nabla v|(y)\\
\leq& C|x|^{-1}\|v\|_{L^\infty} + |\nabla v(y)|,\end{aligned}$$ 
where in the third line, we have used the fact that $U \equiv 0$ on $\partial \{U > 0\}$ and $e_j$ is tangent to $\partial\{U>0\}$.%, and in the last line we have used the comparability between $\eta$ and $v$, and . 
Putting all this back into \eqref{e:towardsetabounds} gives the desired bound on $|\partial_{e_i} \eta|$. 

The estimate on the Hessian of $\eta$ follows similarly. 
\end{proof}

Our final lemma is a local estimate of the energy. It says that the difference of the energy $W(U,1)-W(u,1)$ is controlled by the $C^2$-norm of $u-U$ near $\partial B_1$. Here we are inspired by Leon Simon, in particular \cite[Equation (7.14)]{Simon}.
%of this section will be key in preventing ``mass drop" in the compactness arguments needed to prove Theorem \ref{thm AA}. Here we are inspired by Leon Simon, in particular \cite[Equation (7.14)]{Simon}, to control the drop in the monotone quantity from scale $r$ to infinity by the $C^{2}$-norm of $u-U$ near $\partial B_r$. 

\begin{lemma}\label{l:simontrick}
    Let $u, U$ be as above and assume that $\|u-U\|_{L^2(A_8)} \leq \delta_0$ for $\delta_0 >0$ sufficiently small depending on $U$. We have the estimate 
    \begin{equation}\label{e:lstrick}
    	\begin{aligned}
    		\left|W(U, 1) - W(u, 1)\right| & \leq C\left(\|\eta\|_{C^2(A_2)}^2 + \|u-\tilde{U}\|^2_{C^2(A_2\cap \overline{\{u > 0\}})}\right)\\
    		& \leq C\|u-\tilde{U}\|_{L^2(A_4\cap \overline{\{u > 0\}})}^2,
    \end{aligned}
    \end{equation}
    where $\tilde{U}$ and $\eta$ are as in Lemma \ref{l:c2oneta} and $C > 0$ depends only on $U$ and the dimension.
\end{lemma}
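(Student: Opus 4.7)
The plan is to reduce $W(u,1) - W(U,1)$ to boundary and bulk terms via integration by parts, then show that the leading (linear in the perturbation) contributions cancel exactly, leaving a purely quadratic remainder. Integrating by parts $\int_{B_1 \cap \{u>0\}} |\nabla u|^2$ using $\Delta u = 0$ and $u|_{\partial\{u>0\}} = 0$ yields $\int_{\partial B_1} u\partial_r u$; doing the same for $U$ and using $\partial_r U = U$ on $\partial B_1$ to write $\int_{\partial B_1}U\partial_r U = \int_{\partial B_1}U^2$, one arrives at
\[
    W(u,1) - W(U,1) = \int_{\partial B_1} u(\partial_r u - u)\,d\mathcal H^{d-1} + \big(|\{u>0\}\cap B_1| - |\{U>0\}\cap B_1|\big).
\]

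Setting $v = u - \tilde U$, which is harmonic on $\{u>0\}\cap B_1$ (as both $u$ and the extension $\tilde U$ from Lemma \ref{lemma: extension} are), Green's identity applied to $(\tilde U, v)$ together with $\partial_r\tilde U = \tilde U$ on $\partial B_1$, $v = -\tilde U$ on $\partial\{u>0\}$ (since $u = 0$), and the free boundary condition $\partial_\nu u = 1$ produces
\[
    \int_{\partial B_1 \cap \{u>0\}}\tilde U(\partial_r v - v) = -\int_{\partial\{u>0\}\cap B_1} v\,d\mathcal H^{d-1}.
\]
Combined with the pointwise identity $u(\partial_r u - u) = \tilde U(\partial_r v - v) + v(\partial_r v - v)$ on $\partial B_1 \cap \{u > 0\}$ (and zero elsewhere), this gives $\int_{\partial B_1} u(\partial_r u - u) = -\int_{\partial\{u>0\}\cap B_1} v + O(\|v\|_{C^1}^2)$. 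Next, parametrize $\partial\{u>0\}$ as the graph $y \mapsto y + \eta(y)\nu(y)$ over $\partial\{U>0\}$: the Taylor expansion $\tilde U(y+\eta\nu) = \eta(y) + O(\|\tilde U\|_{C^2}\eta^2)$ (using $\tilde U(y) = 0$, $|\nabla\tilde U(y)| = 1$) yields $-v = \tilde U = \eta + O(\eta^2)$ on $\partial\{u>0\}$, so after accounting for the graph Jacobian $1 + O(\|\eta\|_{C^1})$,
\[
    -\int_{\partial\{u>0\}\cap B_1} v = \int_{\partial\{U>0\}\cap B_1}\eta\,d\mathcal H^{d-1} + O(\|\eta\|_{C^1}^2).
\]
A parallel expansion of the bulk term in tubular coordinates around $\partial\{U>0\}$ gives $|\{u>0\}\cap B_1| - |\{U>0\}\cap B_1| = -\int_{\partial\{U>0\}\cap B_1}\eta + O(\|\eta\|_{C^0}^2)$, so the two $\int\eta$ contributions cancel exactly, establishing the first inequality in \eqref{e:lstrick}. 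The second inequality then follows by applying Lemma \ref{l:c2oneta} to bound $\|\eta\|_{C^2}$ by $\|u - \tilde U\|_{C^2}$ and Lemma \ref{lemmaL2Linfty} to bound $\|u - \tilde U\|_{C^2}$ by $\|u - \tilde U\|_{L^2}$.

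I expect the main technical obstacle to be the careful treatment of the ``corner'' region $\partial B_1 \cap \partial\{U>0\}$: both the graph parametrization of $\partial\{u>0\}\cap B_1$ over $\partial\{U>0\}$ and the normal-coordinate expansion of the symmetric difference $\{u>0\}\triangle\{U>0\}$ introduce discrepancies in a neighborhood of size $\|\eta\|_{C^0}$ near this corner set, since some points $y\in \partial\{U>0\}\cap B_1$ have $y+\eta(y)\nu(y)\notin B_1$ and vice versa. One must verify that the integrands themselves are of size $O(\|\eta\|_{C^0})$ on that strip so that the total error remains genuinely quadratic and can be absorbed into the stated bound.
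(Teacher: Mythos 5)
Your overall strategy is genuinely different from the paper's. The paper first invokes the ``equipartition of energy'' identity (essentially a Pohozaev-type computation, cited to Velichkov) to write
\[
W(U,1)-W(u,1)=\bigl(W(U,1)-W(\bar u,1)\bigr)+\tfrac{1}{d}\int_{\partial B_1}(x\cdot\nabla u-u)^2,
\]
where $\bar u$ is the $1$-homogeneous extension of $u|_{\partial B_1}$. This decomposition has the crucial feature that \emph{both} terms on the right depend only on the spherical data $u|_{\partial B_1}$, $\nabla u|_{\partial B_1}$. The paper then estimates $W(U,1)-W(\bar u,1)$ via a Dirichlet eigenfunction expansion on the link $\Sigma_u$ and the criticality of $U$ for the one-homogeneous functional $\mathcal F$, while you proceed via Green's identity for $v=u-\tilde U$ on $\{u>0\}\cap B_1$ and a direct cancellation of linear terms in $\eta$.

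The gap in your argument is not the corner region near $\partial B_1\cap\partial\{U>0\}$, which, as you note, contributes only quadratic errors, but the region near the \emph{origin}. Your Green's identity is applied on all of $\{u>0\}\cap B_1$, so you need $\tilde U$ to be defined and harmonic there, and your cancellation of the linear terms (the graph parametrization $-v=\eta+O(\eta^2)$ on $\partial\{u>0\}$ and the tubular-coordinate expansion of $|\{u>0\}\cap B_1|-|\{U>0\}\cap B_1|$) needs the free boundaries to be smooth graphs over one another all the way down to the origin. But the hypotheses only give $\|u-U\|_{L^2(A_8)}\le\delta_0$, and Lemma \ref{lemma: extension} only yields the extension $\tilde U$, the graph function $\eta$, and $C^2$-control over an annulus. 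Near the origin, $\partial\{U>0\}$ has an isolated singularity, the analytic extension $\tilde U$ may not cover $\{u>0\}$ (the tube $\Omega_\mu$ pinches linearly at the origin), and the graph structure of $\partial\{u>0\}$ over $\partial\{U>0\}$ is not guaranteed. Without this, the linear cancellation cannot be justified in a neighborhood of the origin, and the residual contribution there is of size $O(1)$, not quadratic. The paper sidesteps all of this because $W(\bar u,1)$ involves only the link $\Sigma_u$ and $u|_{\partial B_1}$, never the bulk behavior near $0$; the bulk structure is already absorbed exactly by the Pohozaev identity, which is valid even when the free boundary is wild near the origin.

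To salvage your approach, you would essentially need to prove a version of the equipartition identity (which you could do by the Pohozaev computation) so that your boundary and volume terms are replaced exactly by spherical quantities before any Taylor expansions are made; at that point you would have reproduced the paper's first step. Alternatively you could try to insert a lower bound $W(u,0^+)\ge W(U,1)-C\delta_0^2$ to control the origin, but that requires additional hypotheses beyond the $L^2$-closeness on $A_8$.

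Two minor comments: (1) In the last display you conclude with Lemma \ref{lemmaL2Linfty} in the form ``$\|u-\tilde U\|_{C^2}\lesssim\|u-\tilde U\|_{L^2}$''; this is the correct ingredient, but it is stated on $A_2$ and $A_3$, so a covering argument is needed to get the estimate on $A_4$ as stated. (2) Your identity $u(\partial_r u-u)=\tilde U(\partial_rv-v)+v(\partial_rv-v)$ on $\partial B_1\cap\{u>0\}$ is correct since $\partial_r\tilde U=\tilde U$ there, but it is worth noting explicitly that the error term $\int_{\partial B_1\cap\{u>0\}}v(\partial_rv-v)$ is controlled by $\|v\|_{C^1(A_2\cap\overline{\{u>0\}})}^2$, an annular norm, so no origin issue arises for that particular piece.
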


\begin{proof}
Let $\bar{u}= |x|u(x/|x|)$, namely the one-homogeneous extension of $u|_{\partial B_1}$. By the equipartition of energy in the radial and spherical directions (see, e.g. \cite[Lemma 9.8]{Velichkov}) we have that 
$$W(U, 1) - W(u, 1) = W(U, 1) -W(\bar{u}, 1)  + \frac{1}{d} \int_{\partial B_1} (x\cdot \nabla u - u)^2 d\mathcal{H}^{d-1}.$$ 
To estimate the quantity on the left, we need only to estimate each quantity on the right. Since $\tilde{U}$ is 1-homogeneous we have that 
\begin{align*}
	\int_{\partial B_1} (x\cdot \nabla u - u)^2 & = \int_{\{u > 0\}\cap \partial B_1} \left(x\cdot \nabla (u-\tilde{U}) -(u-\tilde{U}))\right)^2\, dx \\
	& \leq C\|u-\tilde{U}\|^2_{C^1(\overline{\{u > 0\}}\cap A_2)}.
\end{align*}

We focus now our attention on the term $W(U,1) - W(\bar{u},1)$. Since both functions are one-homogeneous, estimating this term reduces to analyzing the behavior of their restrictions on the sphere. For ease of notation we write $\Sigma_w = \{w > 0\}\cap \mathbb S^{d-1}$ for any continuous function $w$ and we denote by $\lambda^D_i(\Sigma_w)$ the $i$-th Dirichlet eigenvalue of the Laplace-Beltrami operator on $\Sigma_w \subset \mathbb S^{d-1}$. Furthermore, we define the operator 
$$\mathcal F(w):= \kappa_0^2(\lambda^D_1(\Sigma_w) - (d-1))+ \frac{1}{d}\mathcal H^{d-1}(\Sigma_w),$$
where $\kappa_0 = \Vert U\Vert_{L^2(\mathbb S^{d-1})}$. 
Recall that by the one-homogeneity and harmonicity of $U$ we have that $$W(U,1) = \mathcal F(U) = \frac{1}{d}\mathcal H^{d-1}(\Sigma_U).$$

On the other hand we can expand $u|_{\partial B_1}$ as 
 $$u|_{\partial B_1}(\theta) = \sum c_i\phi^D_i(\theta),$$ where $\phi^D_i$ is an $i$th Dirichlet eigenfunction of $\Sigma_u$ with unit $L^2$-norm, i.e. $\int_{\Sigma_u} |\phi_i^D|^2 d\mathcal{H}^{d-1} = 1 $. 
 Then we can compute that 
 \begin{align}\notag
     W(\bar{u}, 1) =& \sum_{i\geq 1} c_i^2 (\lambda^D_i(\Sigma_u) - (d-1)) + \frac{1}{d}\mathcal H^{d-1}(\Sigma_u)\\ \notag
=& \mathcal F(\bar{u}) + (c_1^2-\kappa_0^2)(\lambda^D_1(\Sigma_u) - (d-1))\\ \label{eqWu}
&+ \sum_{i > 1}c_i^2(\lambda^D_i(\Sigma_u) - (d-1)).
 \end{align}

Since $U$ is a critical point to $\mathcal{F}$, it follows from applying a Taylor expansion (see \cite[Appendix A]{EnSVEpi}\footnote{Note the appendix is written for minimizers but the justification of the Taylor series expansion and that $\delta F(U) = 0$ uses only the smoothness of the free boundary and that $U$ solves \eqref{e:bernoulliweak}.}) that 
\begin{equation}\label{e:esvest} \left|\mathcal F(\bar{u}) - \mathcal F(U)\right| \leq C\|\eta\|^2_{C^2(A_2)},\end{equation} 
as long as $\delta_0$ is taken small enough. We are thus left to estimate the last two terms on the right hand side of \eqref{eqWu}. We do this in the following series of claims.\\

\medskip

\noindent{\it Claim 1:} $u$ satisfies
\begin{equation}\label{eqsheeleq}
    -\Delta_{\mathbb{S}^{d-1}} u = (d-1) u +E \quad \mbox{on $\Sigma_u$,}
\end{equation}
with $\Vert E\Vert_{L^\infty(\mathbb{S}^{d-1})} \leq C\|u-\tilde{U}\|_{C^2(A_2\cap \overline{\{u > 0\}})}$.\\

\medskip

\noindent{ \it Proof:} We note that $$\|u-\tilde{U}\|_{C^2(\overline{\{u > 0\}}\cap A_2)} \geq \|\Delta_{\mathbb S^{d-1}} (u-\tilde{U})\|_{L^\infty(\Sigma_u)} = \|(d-1)(u-\tilde{U}) + E\|_{L^\infty(\Sigma_u)}.$$ So by the triangle inequality we have the desired result. \\

\noindent{\it Claim 2:} We prove
\begin{equation}\label{eqspecbd}
    |\lambda_1^D(\Sigma_u)-(d-1)|^2 +\sum_{i > 1} c_i^2[\lambda_i^D(\Sigma_u)-(d-1)] \leq C\|u-\tilde{U}\|^2_{C^2(A_2\cap \overline{\{u > 0\}})}
\end{equation}
and $\lambda_i^D(\Sigma_u)-(d-1) \geq \frac{1}{C(U)}$ for $i\geq 2$.

\medskip

\noindent{ \it Proof:} Since $u = \sum_{i=1}^\infty c_i \phi_i^D$, by plugging this representation, understood as a convergent series in $L^2(\Sigma_u)$, into \eqref{eqsheeleq} we get
\begin{equation*}
     \sum_{i \geq 1} c_i[\lambda_i^D(\Sigma_u)-(d-1)]\phi_i^D =  E,
\end{equation*}
which, after taking $L^2$ norms on both sides and exploiting the orthogonality of the $\phi_i^D$ yields
\begin{equation}\label{eqbeauON}
    c_1^2[\lambda_1^D(\Sigma_u)-(d-1)]^2+ \sum_{i > 1} c_i^2[\lambda_i^D(\Sigma_u)-(d-1)]^2 \leq C\|u-\tilde{U}\|^2_{C^2(A_2\cap \overline{\{u > 0\}})}.
\end{equation}
On the other hand, by taking $\delta_0$ small enough, we have that $c_1 \geq \frac{1}{C(U)}$ with $C(U)$ depending only on $\kappa_0$. By the stability of the Laplacian spectrum under domain perturbations, the same smallness implies that $\lambda_i^D(\Sigma_u)-(d-1) \geq \frac{1}{C(U)}$ for $i \geq 2$ with $C(U)$ depending in this case on the spectral gap $\lambda_2^D(\Sigma_U)-\lambda_1^D(\Sigma_U)> 0$. These observations combined with \eqref{eqbeauON} yield  \eqref{eqspecbd}.\\

\medskip

\noindent{\it Claim 3:} $|c_1-\kappa_0| \leq  C\|u-\tilde{U}\|_{C^2(A_2\cap \overline{\{u > 0\}})} + C\|\eta\|_{L^\infty(A_2)}$.

\medskip

\noindent{ \it Proof:} By the reverse triangle inequality we have that $|c_1-\kappa_0| \leq \|c_1\phi_1^D-U\|_{L^2(\mathbb S^{d-1})}$. On the other hand, by Claim 2, we have that
\begin{align}\notag
    \|c_1\phi_1^D-U\|_{L^2(\mathbb S^{d-1})}&\leq \|c_1\phi_1^D-u\|_{L^2(\mathbb S^{d-1})} + \|u-U\|_{L^2(\mathbb S^{d-1})} \\\notag
    & \leq \left(\sum_{i\geq 2} c_i^2 \right)^{\frac12} + \|u-U\|_{L^2(\mathbb S^{d-1})}\\ \label{eqci}
    & \leq C\|u-\tilde{U}\|_{C^2(A_2\cap \overline{\{u > 0\}})} + \|u-U\|_{L^2(\mathbb S^{d-1})}.
\end{align}
On the other hand, we notice that
\begin{align}\notag
   \|u-U\|_{L^2(\mathbb S^{d-1})} &\leq \|u-\tilde{U}\|_{L^2(\Sigma_u)} + \|\tilde{U}\|_{L^2(\{U > 0\}\Delta \Sigma_u)} \\ \label{eqclaim3}
   &\leq \|u-\tilde{U}\|_{L^2(\Sigma_u)} + C\|\eta\|_{L^\infty(A_2)},
\end{align}
where the last inequality follows from \eqref{eq:distest}. Finally, we conclude the proof of the claim by combining \eqref{eqci} and \eqref{eqclaim3}.\\

\medskip

The combination of the three claims proved above with \eqref{eqWu} and \eqref{e:esvest} yields the first inequality in \eqref{e:lstrick}, while the second follows directly from Lemma \ref{lemmaL2Linfty} and Lemma \ref{l:c2oneta}.

\end{proof}

\section{Rates of convergence of solutions asymptotic to integrable cones: Proof of Theorem \ref{thm AA}}\label{sec: AAresult}

In this section we prove Theorem \ref{thm AA}. Our first lemma shows that if $u$ is sufficiently close to a regular cone $U$ which is integrable through rotations, then the drop in monotonicity controls the distance of $u$ to the cone $U$. This fact is well known qualitatively (see, e.g. \cite[Lemma 3.1]{EdEng}), but we require a quantitative version of it. We remark that this is the only place where we use the assumption that $U$ is integrable through rotations, see Definition \ref{def: intcone}.\\

In the rest of this section we will denote, as in Lemma \ref{lemma: extension},  by $\Omega_s$ the cone over $\{x\in \mathbb{S}^{d-1}: \dis_{\mathbb{S}^{d-1}}(x, \{U>0\}< s\}$, where $s$ is any positive number.
% To this end, we introduce a scale invariant $L^2$-norm:

% % $$\|f\|^2_{\tilde{L}^{2}(\Omega)}:=  \fint_{\Omega} \frac{f^2(x)}{|x|^2}\, dx. $$

\begin{lemma}\label{lemma cone approx}
Let $U$ be a regular cone integrable through rotations. There exist $\delta_1>0$ and $C_1>1$ such that for any $r>0$, if $u$ is a non-degenerate weak solution to the one-phase Bernoulli problem satisfying
\begin{equation}\label{e:closeatinfinity}
r^{-(d+2)} \|u-U\|^2_{L^2(A_{8r})} < \delta_1,
\end{equation}
then
\begin{equation}\label{eq quadestimate}
\inf_{A\in O(d)} r^{-(d+2)} \Vert u - U(A\cdot)\Vert_{L ^{2}(A_{2r})}^2\leq C_1 (W(u,4r)- W(u, r/4)).
\end{equation}
\end{lemma}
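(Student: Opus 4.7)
The plan is to argue by compactness and contradiction at the scale $r = 1$; the general case follows from the scale-invariance of both sides. Suppose the conclusion fails at $r=1$: there is a sequence of non-degenerate weak solutions $u_k$ with $\|u_k - U\|_{L^2(A_8)} \to 0$ yet
$$\epsilon_k^2 := \inf_{A \in O(d)}\|u_k - U(A\cdot)\|^2_{L^2(A_2)} > k \cdot \big(W(u_k, 4) - W(u_k, 1/4)\big).$$
Since both $W$ and the $L^2$ norm on $A_2$ are $O(d)$-invariant, I will replace $u_k$ by $u_k \circ A_k^{-1}$, where $A_k$ attains the infimum, so that $\|u_k - U\|_{L^2(A_2)} = \epsilon_k$ and the minimizer becomes $A = I$. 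Note that $\epsilon_k \to 0$. Form the normalized differences $v_k := (u_k - U)/\epsilon_k$, which will be shown to converge to a nontrivial one-homogeneous solution of \eqref{eq:lineareq} that is simultaneously forced to be orthogonal to all such solutions.

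By Lemma \ref{lemma: extension} and the Moser-type estimates of Lemmas \ref{lemmaL2Linfty} and \ref{l:linfinitytoc2} (applied on suitably nested annuli to absorb the $L^2\to L^\infty$ step), the sequence $v_k$ is uniformly bounded in $C^2(\overline{\{u_k>0\}}\cap A_{2-\delta})$ for every $\delta > 0$, and, thanks to Lemma \ref{lemma: extension}(7), satisfies the linearized problem \eqref{eq:lineareq} up to an $O(\epsilon_k)$ quadratic error on the boundary. Passing to a subsequence, $v_k \to v$ in $C^2_{\mathrm{loc}}$ on $\overline{\{U > 0\}}\cap A_2$, where $v$ solves \eqref{eq:lineareq} exactly on $\{U > 0\} \cap A_2$. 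The Hausdorff convergence of the free boundaries together with the Lipschitz character of $U$ ensures that the $L^2$-mass of $v_k$ on the symmetric difference of the positivity sets vanishes, so $\|v\|_{L^2(A_2\cap\{U>0\})} = 1$ in the limit. Combining Euler's identity $x\cdot\nabla U = U$ on $\{U>0\}$ with the Weiss derivative identity \eqref{eq weissder} yields
$$\int_{1/4}^{4} \frac{2}{s^{d+2}} \int_{\partial B_s \cap \{u_k > 0\}} (x \cdot \nabla v_k - v_k)^2 \, d\mathcal{H}^{d-1}\, ds = \epsilon_k^{-2}\big(W(u_k, 4) - W(u_k, 1/4)\big) < \tfrac{1}{k},$$
which in the limit forces $x\cdot\nabla v = v$, so $v$ is one-homogeneous and extends to a one-homogeneous solution of \eqref{eq:lineareq} on the whole cone $\{U > 0\}$.

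The contradiction now comes from the integrability hypothesis. On the one hand, since $U$ is integrable through rotations (Definition \ref{def: intcone}), the one-homogeneous $v$ must take the form $v(x) = \nabla U(x) \cdot Mx$ for some skew-symmetric matrix $M$. On the other hand, vanishing of the first variation of $A \mapsto \|u_k - U(A\cdot)\|^2_{L^2(A_2)}$ at the minimizer $A=I$ gives, for every skew-symmetric $M'$,
$$\int_{A_2 \cap \{U > 0\}} (u_k - U)(x)\, \nabla U(x) \cdot M'x \, dx = 0.$$
Dividing by $\epsilon_k$ and taking the limit yields $\int_{A_2 \cap \{U > 0\}} v\, (\nabla U \cdot M'x)\, dx = 0$ for every skew $M'$. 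Specializing to $M' = M$ forces $\int_{A_2 \cap \{U > 0\}} |\nabla U \cdot Mx|^2 dx = 0$, so $v \equiv 0$, contradicting $\|v\|_{L^2(A_2\cap\{U>0\})} = 1$. The main technical obstacle I anticipate is rigorously justifying that the normalized differences converge to an exact solution of the linearized equation \eqref{eq:lineareq} (not merely a perturbed version) while retaining the quantitative identification $\|v\|_{L^2(A_2\cap\{U>0\})} = 1$ in the limit; this is precisely what the Section \ref{sec: conv} machinery---the analytic extension of Lemma \ref{lemma: extension}, the $L^\infty$-to-$C^2$ bound of Lemma \ref{l:linfinitytoc2}, and the $L^2$-to-$L^\infty$ Moser estimate of Lemma \ref{lemmaL2Linfty}---was built to handle.
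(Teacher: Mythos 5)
Your proposal follows the same contradiction-and-compactness scheme as the paper: reduce to $r=1$, pass to the optimal rotation, normalize the difference, extract a nontrivial one-homogeneous solution of \eqref{eq:lineareq} in the limit, invoke integrability to write it as $\nabla U\cdot Mx$, and contradict the orthogonality condition coming from the first variation at $A=I$. The one step you gloss over is crucial and worth making explicit: after normalizing so that $\Vert v_k\Vert_{L^2(A_2)}=1$, you still need control of $v_k$ on a strictly larger annulus (say $A_4$) before the Moser estimate of Lemma \ref{lemmaL2Linfty} can bound the $C^2$ norm on $A_2$ by the $L^2$ norm, and before you can rule out $L^2$-mass escaping to $\partial A_2$ when claiming $\Vert v\Vert_{L^2(A_2\cap\{U>0\})}=1$. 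In the paper this is supplied by the Poincar\'e-type inequality \eqref{e:MNpoinc} combined with the Weiss derivative identity \eqref{eq weissder}, which together give $\Vert v_k\Vert_{L^2(A_4)}\le C\Vert v_k\Vert_{L^2(A_2)}+o(1)$; your phrase ``applied on suitably nested annuli'' does not obviously capture this radial-propagation mechanism. Finally, note that the paper normalizes by $d_k=\Vert u_k-\tilde U(A_k\cdot)\Vert_{L^2(\Omega_{\mu_k}\cap A_2)}$ using the harmonic extension $\tilde U$ on the enlarged cone $\Omega_{\mu_k}$; this is comparable to your $\epsilon_k$ (indeed $|u_k-\tilde U|\ge |u_k-U|$ on $\Omega_{\mu_k}\setminus\{U>0\}$ since $\tilde U\le 0$ there), but it directly produces the enhanced bound \eqref{eq enhancedquadestimate} of Remark \ref{rmk:tildeU}, which is the form actually used in the proof of Theorem \ref{thm AA}, whereas your normalization yields only \eqref{eq quadestimate} and would require an extra comparison afterwards.
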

\begin{remark}\label{rmk:tildeU}
	In fact, what we show is slightly stronger than \eqref{eq quadestimate}.  We actually prove the following: let
    $A_0\in O(d)$ be the matrix characterized by
	\[ \|u-U(A_0 \cdot)\|_{L^2(A_{2r}) } = \inf_{A\in O(d)} \|u-U(A \cdot)\|_{L^2(A_{2r}) }. \] Define $\mu(u,2r)= \text{dist}( \partial \{u>0\}\cap A_{2r}, \partial \{U>0\}\cap A_{2r})$ and let $\Omega_{\mu(u, 2r)} $ be the cone over the set $\{x\in \mathbb S^{d-1}\mid \mathrm{dist}(x, \{U > 0\}) \leq 2\mu(u, 2r)\}$. We will sometimes abuse notation and write $\mu(u,2r)$ simply as $\mu$.

    Under the assumptions of this lemma (if $\delta_1$ is small enough) Lemma \ref{lemma: extension} guarantees that the analytic extension $\tilde{U}$ of $U$ is well defined in $\Omega_{\mu} \cap A_{4r}$. Furthermore, assuming $A_0$ is close enough to the identity (which we can do again by shrinking $\delta_1$) we can assume that $\tilde{U}(A_0\cdot)$ is well defined on $\Omega_{\mu} \cap A_{4r}$ and that $\tilde{U}\leq 0$ on $\Omega_{\mu}\backslash \{U > 0\}$. 
     
	 What we will prove is, under the conditions of the lemma, there exists $C_0=C(U)>0$ such that
\begin{equation}\label{eq enhancedquadestimate}
	     r^{-(d+2)} \|u-\tilde{U}(A_0 \cdot)\|_{L^2(  \Omega_{\mu}\cap A_{2r})) }^2 \leq C_0 ( W(u,4r) - W(u,r/4)).
	\end{equation}
    
    Notice that the sign on $\tilde{U}$ (outside of $\{U > 0\}$) implies $|u-\tilde{U}(A_0 \cdot)| \geq |u-U(A_0\cdot)|$ in $\Omega_{\mu} \cap A_2$, which combined with \eqref{eq enhancedquadestimate} clearly implies \eqref{eq quadestimate}. We phrase the result in a simpler way above, so the readers do not have to fuss about the definition of the analytic extension $\tilde{U}$ or $\Omega_\mu$, and whether or not $\tilde{U}(A\cdot )$ is well defined in the set $\Omega_\mu$ for particular choices of $A\in O(d)$.
\end{remark}

\begin{proof}
    Since the statement of the theorem is scale invariant, upon replacing $u$ by $u_r(x) = \frac{u(rx)}{r}$ we can assume $r=1$. Following the remark we 
    %In virtue of Lemma \ref{lemma: extension}, by taking $\delta_1$ sufficiently small we can define the harmonic extension $\tilde{U}$ in a conical open set $\Omega_{\mu_0}$ containing $\{u>0\}$ in $A_4$ and with $\tilde{U} \leq 0$ in $\Omega_{\mu_0}\setminus \{U>0\}$. Moreover, if $A \in O(d)$ is close enough to the identity matrix, we have that $\tilde{U}(A\cdot)$ is also well defined in $\Omega_{\mu_0}\cap A_4$. 
    assume for the sake of contradiction that the result is not true, which means that we have a sequence of solutions $u_k$ and $\mu_k= \text{dist}( \partial \{u_k>0\}\cap A_2, \partial \{U>0\}\cap A_2) $ such that 
    \begin{equation}\label{eq proximityU}
        \|u_k-U\|_{L^2(A_8)} \to 0, 
    \end{equation}
    \begin{equation}\label{eq contrradial}
        %\inf_{A\in O(d)} 
        \Vert u_k - \tilde{U}(A_k\cdot)\Vert_{L^{2}(A_{2}\cap \Omega_{\mu_k})}^2 \geq k \left( W(u_k, 4)-W(u_k,1/4)\right),
    \end{equation}
where $A_k\in O(d)$ is characterized by
$$    \|u_k - U(A_k\cdot)\|_{L^2(A_2)} = \inf_{A\in O(d)}\|u_k - U(A\cdot)\|_{L^2(A_2)}.$$ 
In virtue of  Lemma \ref{lemmaL2Linfty} and Lemma \ref{l:c2oneta}, we have that 
\begin{equation}\label{eq controlmuk}
    \mu_k \leq C \Vert u_k-\tilde{U}\Vert_{L^2(\{u>0\} \cap A_3)}
\end{equation} 
which goes to zero in virtue of Lemma \ref{lemma: extension}. Also, since $u_k \to U$ in $L^2(A_8)$, it must be that $A_k \to I$, the identity matrix in $O(d)$, implying that for $k$ large enough $\tilde{U}(A_k \dot)$ is well defined in $\Omega_{k} \cap A_4$ -where $\Omega_k := \Omega_{\mu_k}$. On the other hand, by replacing the contradicting sequence $u_k$ by $u_k(A_k^{-1}\cdot)$ (still denoted as $u_k$ and $A_k (\Omega_k)$ still denoted by $\Omega_k$), the hypotheses \eqref{eq proximityU} and \eqref{eq contrradial} still hold (note that this does not change the energy $W$) and moreover, we have
\begin{equation}\label{eq:optA}
    \|u_k - U\|_{L^2(A_2)} = \inf_{A\in O(d)}\|u_k - U(A\cdot)\|_{L^2(A_2)}.
\end{equation}
Notice that by the optimality condition \eqref{eq:optA},  we have that for any skew-symmetric matrix $M$ the map  $ t\to \Vert u_k - U(\exp(tM))\Vert_{L^{2}(A_2)}^2$ attains a global minimum at $t=0$, implying
 \begin{equation}\label{eq ocond}
     \int_{A_2} (u_k -U) \nabla U\cdot M[x]=0.
 \end{equation}

Set $d_k := \|u_k - \tilde{U}\|_{L^{2}( \Omega_k\cap A_2)}$. The contradictory hypothesis \eqref{eq contrradial} implies that  satisfies 
\begin{equation}\label{eq contrradialnew}
	 d_k^2 \geq k(W(u_k,4) - W(u_k,1/4)).
\end{equation}
 Recall that we extend $u_k$ to be zero outside of $\{u_k>0\}$, and with this convention clearly $u_k, \tilde{U} \in W^{1,2}(\Omega_k \cap A_4)$. On the other hand, since $\tilde{U}$ is one-homogeneous (equivalently $\nabla \tilde{U}\cdot x=\tilde{U}$) and $\Omega_k$ is an open cone, we can apply the Poincar\'e-type inequality \eqref{e:MNpoinc} in the Appendix to $u_k-\tilde{U}\in W^{1,2}(\Omega_k \cap A_4)$ to deduce
\begin{align*}
	\int_{\Omega_k \cap A_4} (u_k-\tilde{U})^2 \, dx & \leq  C\int_{\Omega_k \cap A_4} (x\cdot \nabla u_k - u_k)^2\, dx + C\int_{\Omega_k \cap A_2} (u_k-\tilde{U})^2\, dx \\
	& = C\int_{A_4} (x\cdot \nabla u_k - u_k)^2\, dx + C\int_{\Omega_k \cap A_2} (u_k-\tilde{U})^2\, dx.
\end{align*}
%\begin{eqnarray*}
%     \int_{O_k} (u_k-\tilde{U})^2 \, dx &\leq & C\int_{O_k} (x\cdot \nabla u_k - u_k)^2\, dx \\
%     &&+ C\int_{O_k \cap A_2} (u_k-\tilde{U})^2\, dx. \,\,\, \
%\end{eqnarray*} 
Combined with \eqref{eq weissder} and \eqref{eq contrradialnew}, it implies
\begin{equation}\label{eq:largecontrol}
     \int_{\Omega_k \cap A_4} (u_k-\tilde{U})^2 \, dx \leq C\int_{\Omega_k \cap A_2} (u_k-\tilde{U})^2\, dx+ Cd_k^2 \leq Cd_k^2.
\end{equation}
Moreover, we can combine \eqref{eq controlmuk}, \eqref{eq:largecontrol}, and \eqref{eq:ctrldiff} to deduce that 
\begin{equation}\label{eqLinftycontrol}
  \mu_k \leq C  \|u_k - \tilde{U}\|_{C^2(\overline{\{u_k>0\}}\cap A_3)} \leq Cd_k.
\end{equation} 

Set $w_k = \frac{u_k - \tilde{U}}{d_k}$ on $\Omega_k \cap A_3$.  Our goal now is to show that $w_k$ converges to a non-trivial one-homogeneous function $w$ which solves \eqref{eq:lineareq}. By the $C^2$ upper bound in \eqref{eqLinftycontrol} and the fact that $\{u_k >0\}\rightarrow \{U > 0\}$ locally smoothly in $A_8$, a diagonal argument on an compact exhaustion of $\{U>0\}$, shows that there is a bounded harmonic $w$ such that  $w_k \rightarrow w$  pointwise on $\{U > 0\} \cap A_3$ and smoothly on any compact subset of $\{U > 0\}\cap A_3$. Furthermore, we claim that the $L^2$ norms of $w_k$ do not accumulate at the boundary $\partial\{U>0\}$ in $A_2$, and thus the convergence is also strong in $L^2(\{U>0\}\cap A_2)$. To justify that, let $\Gamma_\eta$ denote a $\eta$-neighborhood of the boundary $\partial\{U>0\}$, i.e.
\[ \Gamma_\eta := \{x\in \mathbb{R}^d: \dis(x, \partial\{U>0\}) < \eta\} \cap A_2.\]
The non-concentration claim thus boils down to show that 
\begin{equation}\label{eq nonacum}
  \lim_{\eta \to 0^+}   \limsup_{k\to \infty} \Vert w_k \Vert_{L^2(\Omega_k \cap \Gamma_\eta)} =0.
\end{equation}
Note that in $(\Gamma_\eta \setminus \{u_k>0\}) \cap A_2$, we have $u_k - \tilde{U} = -\tilde{U}$, which can be bounded by $\text{dist}(x,\partial\{U>0\}\cap A_2)$ by Lemma \ref{lemma: extension}, more precisely \eqref{eq:distest}.  Thus, by \eqref{eqLinftycontrol} we have 
\begin{align}\notag
     \|w_k\|_{L^2(\Omega_k \cap \Gamma_\eta)} 
    & \leq \|w_k\|_{L^2(\Gamma_\eta \cap \{u_k>0\})} + \|w_k\|_{L^2(\Gamma_\eta \setminus\{u_k>0\})} \\\notag
    & \leq C\Big(|\Gamma_\eta \cap \{u_k>0\}| \frac{\|u_k - \tilde{U} \|_{L^\infty(\Omega_k\cap A_2)}}{d_k} + \frac{\mu_k}{d_k} |\Gamma_\eta \setminus\{u_k>0\}|\Big) \\\label{eq:thinbd}
    & \leq C |\Gamma_\eta|,
\end{align}
proving \eqref{eq nonacum}. This combined with the pointwise convergence of $w_k$ on compacts subset of $\{U>0\} \cap A_2$ implies that $\Vert w\Vert_{L^2(\{U>0\}\cap A_2)} =1$, in particular $w\neq 0$.
On the other hand, by combining the Weiss monotonicity formula \eqref{eq weissder}, the one-homogeneity of $\tilde{U}$, and the contradiction hypothesis \eqref{eq contrradial}, we have
\begin{equation}\label{eq conradial2}
     \int_{\Omega_k \cap A_4}(x\cdot \nabla w_k - w_k)^2\, dx \leq \frac{1}{d_k^2} \int_{ A_4}(x\cdot \nabla u_k - u_k)^2\, dx \leq \frac{1}{2k}.
\end{equation}
Thus, by the smooth convergence of $w_k$ to $w$ on compacta, we deduce that $w$ is one-homogeneous.
It remains to show that $w$ satisfies the boundary condition of \eqref{eq:lineareq}. Recall that $u_k - \tilde{U}$ satisfies the boundary value problem \eqref{eq:linaroundu} in $\{u_k>0\}$. 
Let $h_k$ denotes the function in \eqref{eq:h} with $\tilde{w}_1 = \tilde{U}$ and $w_2 = u_k$.
Thus for any $\varphi \in C_c^\infty(A_{\frac12,2})$ we have 
\begin{eqnarray}\notag
	\int_{\{u_k > 0\}} \nabla w_k \cdot \nabla \varphi - \int_{\partial \{u_k > 0\}} h_k w_k \varphi 
	& = &\int_{\partial\{u_k>0\}} \left[ -\frac12 d_k |\nabla w_k|^2 + d_k O(w_k^2) \right] \varphi \\ \label{eq:BCwk}
	& = &d_k \|\varphi\|_{L^\infty} \cdot O(1),
\end{eqnarray} 
where the last equality follows from the $C^2$ bound in \eqref{eqLinftycontrol}. Let $D_\epsilon = \{U > \epsilon\}$. We note that for all $\epsilon > 0$ small enough this has smooth boundary in $A_2$ and fixing $\epsilon > 0$ then $D_\epsilon \subset \{u_k > 0\}$ for $k$ sufficiently large. 
Let $h_\epsilon$ denote the mean curvature of $\partial D_\epsilon$. Since both $\partial\{u_k>0\}$ and $\partial D_\epsilon$ tend to $\partial \{U>0\}$ smoothly in $A_3$, for $k$ sufficiently large (depending on $\epsilon$) $\partial\{u_k>0\}$ can be written as a graph $\eta_{k,\epsilon}$ over $\partial D_\epsilon$; and $\partial\{U>0\}$ can be written as a graph $\eta_\epsilon$ over $\partial D_\epsilon$. Moreover, both $h_k$ and $h_\epsilon$ tend to $H>0$, the mean curvature of $\partial\{U>0\}$, as $k\to\infty$ and $\epsilon\to 0$ respectively.

We can then compute using uniform convergence of $w_k$ on compacta and \eqref{eq:BCwk} that 
    \begin{align}\notag
         \int_{D_\epsilon} \nabla w \cdot \nabla \varphi - \int_{\partial D_{\epsilon}} h_\epsilon w \varphi 
        =& \lim_{k \rightarrow \infty} \int_{D_\epsilon} \nabla w_k\cdot \nabla \varphi - \int_{\partial D_{\epsilon}} h_\epsilon w_k \varphi \\ \notag
        =& \lim_{k\rightarrow \infty} \int_{\{u_k > 0\}} \nabla w_k \cdot \nabla \varphi - \int_{\partial \{u_k > 0\}} h_k w_k \varphi\\ \notag
        + &\int_{\partial D_\epsilon} \left(h_k(x+\eta_{k,\epsilon}(x) \nu_\epsilon) - h_{\epsilon}(x) \right) w_k(x) \varphi(x) \\ \notag
        &+ O(\|\nabla w_k\|_{L^\infty(\overline{\{u_k > 0\}})}\|\nabla \varphi\|_{L^\infty} \cdot|\{u_k > 0\} \setminus D_\epsilon|)\\ \notag
        &+ O(\|w_k\|_{C^1(\overline{\{u_k > 0\}})}\|\varphi\|_{C^1}\|\eta_{k,\epsilon}\|_{C^2}) \\\notag
        = & {\int_{\partial D_\epsilon} \left(H(x+\eta_\epsilon(x)\nu_\epsilon) - h_\epsilon(x) \right) w(x)\varphi(x)} \\ \notag
        &+ O\left(\|\varphi\|_{C^1} \cdot \left|\{U>0\} \setminus D_\epsilon \right| \right)\\ \label{e:epsilonoverk}
        &+ O\left(\|\varphi\|_{C^1} \|\eta_\epsilon\|_{C^2} \right).
    \end{align}
 
Finally, by passing $\epsilon \to  0$ and the smooth convergence of $\partial D_\epsilon$ to $\partial\{U>0\}$ we conclude that $w$ solves \eqref{eq:lineareq}.

Since $U$ is assumed to be integrable through rotations, one-homogeneous solutions to \eqref{eq:lineareq} are classified: there exists a skew symmetric matrix $M_0$ so  that 
$$w(x) = \nabla U\cdot M_0[x],\qquad \text{ on } \{U > 0\}\cap A_{2}.$$ 
On the other hand, since  $w_k \to w  \text{ in } L^2(\{U > 0\}\cap A_{2})$ and \eqref{eq:thinbd}, we can deduce from \eqref{eq ocond} that for any skew symmetric $M$, 
$$ \int_{A_{2}} w \nabla U\cdot M[x]=0.$$
In other words, $w$ is $L^2$-orthogonal to all the other one-homogeneous solutions to \eqref{eq:lineareq}, which is a contradiction.
\end{proof}

A simple corollary is that integrable one-homogeneous solutions are in a sense isolated in the space of one-homogeneous solutions. We imagine this is well known but were unable to locate a reference in the setting of the one-phase problem. 

\begin{comment}

\begin{lemma}\label{l:quantsqueeze}
There exists a $\delta_0  >0$ and $C > 1$ (depending on the dimension) such that if $u$ is a solution to the one-phase Bernoulli problem with $W(u, 4) - W(u. 1/4) < \delta_0$, then there exists a one-homogeneous solution to the Bernoulli problem, $V$ such that $$\|u-V\|^2_{L^2(B_2\backslash B_{1/2})} \leq C\left(W(u, 4) - W(u, 1/4)\right).$$
\end{lemma}

\begin{proof}
    Assume not and let $u_n$ be contradictory sequence, that is $W(u_n, 4) - W(u_n,  1/4) \rightarrow 0$ but for any $V$ a one-homogeneous solution we have $\|u_n - V\|_{L^2}^2 \geq n(W(u_n, 4) - W(u_n,  1/4)).$

    Let $\bar{u}_n$ be the one-homogeneous extension of $u_n|_{\partial B_1}$. We know (SEE COMMENTED OUT STUFF) that $\|u_n - \bar{u}_n\|^2_{L^2} \leq \delta W$

     %    Let $u$ be as in the lemma. We note that $|\nabla u\cdot x - u| = r^2|\partial_r u_{r}(\omega)|$ where $x = r\omega$ and $u_r(y) := u(ry)/r$. So for $\rho_1, \rho_2 \in [1,4]$ we have $$\begin{aligned}\int_{\partial B_1} |u_{\rho_1} - u_{\rho_2}|^2 \leq& \int_{\partial B_1} \left(\int_{\rho_1}^{\rho_2} |\partial_r u_r(x)|\, dr\right)^2\, d\sigma\\
   %  \leq&  \left(\int_{\rho_1}^{\rho_2} r^{-4}\, dr\right)\left(\int_{\partial B_1} \int_{\rho_1}^{\rho_2} r^4|\partial_r u_r|^2\, dr\, d\sigma\right)\\
    % \leq& C_d\int_{B_{4}\backslash B_1} |\nabla u\cdot x - u|^2\, dx \equiv C_d\delta^2. \end{aligned}$$

 %    Let $\overline{u}(x) = |x|u(x/|x|)$ be the one-homogeneous extension of $u|_{\partial B_1}$. We can compute that $$\int_{B_4\backslash B_1} |\tilde{u} - u|^2 = \int_{1}^4 r^{n-1}\int_{\partial B_1} |ru_1 - u(rx)|^2 = \int_1^4 r^{n+1}\int_{\partial B_1} |u_1 - u_r|^2 \leq  C_d \delta^2.$$ The result follows. 

    We know the soft result that $u_n \rightarrow V_\infty$ a one-homogeneous solution. 

    Since $V$ is one-homogeneous $\nabla V\cdot x=V$, so that we can apply the Poincar\'e-type inequality \eqref{e:MNpoinc} to $u_n-V$ to deduce
\begin{eqnarray*}
     \int_{(B_{4}\backslash B_{1/4})} (u_n-V)^2 \, dx &\leq& C\int_{B_4\backslash B_{1/4}} (x\cdot \nabla u_n - u_n)^2\, dx \\
     &&+ C\int_{B_{2}\backslash B_{1/2}} (u_n-V)^2\, dx, \,\,\, \
\end{eqnarray*}
which combined with \eqref{eq weissder} and our contradiction hypothesis implie
\begin{equation}\label{eq:largecontrol}
     \int_{B_{4}\backslash B_{1/4}} (u_n-V)^2 \, dx \leq C\int_{B_{2}\backslash B_{1/2}} (u_n-V)^2\, dx.
\end{equation}
\end{proof}

\end{comment} 

\begin{corollary}\label{c:integrability}
    Let $U$ be a regular cone integrable through rotations. There exists a $\delta_1 > 0$ (depending on $U$) such that if $V$ is one-homogeneous non-degenerate  weak solution (as in Definition \ref{d:weaksol}) and $\|U- V\|_{L^2(A_8)} \leq \delta_1$, then $V(x) = U(Ax)$ for some $A \in O(d)$. 
\end{corollary}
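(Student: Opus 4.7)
The plan is to apply Lemma \ref{lemma cone approx} directly to $V$ and exploit the fact that one-homogeneous solutions have scale-invariant Weiss energy. First I would observe that for any one-homogeneous function $V$, both $|\nabla V|$ and $\chi_{\{V>0\}}$ are $0$-homogeneous while $V^2$ is $2$-homogeneous, so a change of variables shows
\[
	W(V,r) = \int_{B_1}|\nabla V|^2 + |\{V>0\}\cap B_1| - \int_{\partial B_1}V^2\,d\mathcal H^{d-1},
\]
which is independent of $r$. Equivalently, $x\cdot\nabla V - V \equiv 0$, so the right-hand side of the Weiss monotonicity identity \eqref{eq weissder} vanishes identically.

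Next, I would choose $\delta_1$ smaller than the constant $\delta_1^{\mathrm{Lem}}$ supplied by Lemma \ref{lemma cone approx} (applied at scale $r=1$), so that the smallness hypothesis \eqref{e:closeatinfinity} holds for $u=V$. Since $V$ is a non-degenerate weak solution in the sense of Definition \ref{d:weaksol}, the lemma yields
\[
	\inf_{A\in O(d)} \|V - U(A\cdot)\|_{L^2(A_2)}^2 \leq C_1\bigl(W(V,4) - W(V,1/4)\bigr) = 0,
\]
the last equality by the first paragraph. The map $A\mapsto \|V-U(A\cdot)\|_{L^2(A_2)}$ is continuous on the compact group $O(d)$, so the infimum is attained: there exists $A_*\in O(d)$ with $V(x) = U(A_*x)$ for a.e.\ $x\in A_2$.

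Finally, since both $V$ and $U(A_*\cdot)$ are continuous and $1$-homogeneous, agreement on the annulus $A_2$ propagates to agreement on all of $\mathbb R^d$: any $y\neq 0$ can be written as $\lambda x$ with $x\in A_2$ and $\lambda>0$, and then $V(y) = \lambda V(x) = \lambda U(A_*x) = U(A_*y)$. This gives the desired identity. There is no real obstacle here; the work was already done in Lemma \ref{lemma cone approx}, and the corollary is essentially the statement that integrable regular cones are rigid under the Weiss-flat topology.
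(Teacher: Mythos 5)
Your proof is correct and takes essentially the same approach as the paper: apply Lemma \ref{lemma cone approx} and use that $W(V,\cdot)$ is constant for one-homogeneous $V$. The only (minor) difference is in the final globalization step: the paper invokes unique continuation for harmonic functions in $\{V>0\}\cap A^{-1}(\{U>0\})$, whereas you use the $1$-homogeneity of both $V$ and $U(A_*\cdot)$ to propagate equality from the annulus to all of $\R^d$. Your route is slightly more elementary and arguably cleaner, since it avoids having to check that the positivity sets overlap on a set large enough for unique continuation to apply; both arguments are valid.
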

\begin{proof}
   Suppose $\delta > 0$ is small enough so we can apply Lemma \ref{lemma cone approx}. The result immediately follows because $r\mapsto W(V, r)$ is a constant since $V$ is $1$-homogeneous. Besides, by the unique continuation property for harmonic functions in $\{V>0\} \cap A^{-1}(\{U>0\})$, where $A\in O(d)$ is close to the identity matrix, the equality $V(x) = U(Ax)$ holds globally.
\end{proof}

We can apply Lemma \ref{lemma cone approx} inductively, combined with the monotonicity formula \eqref{eq weissder}, to show the uniqueness of the blowup/blowdown limit at regular cones which are integrable through rotations. 
    
\begin{lemma}\label{l:stitching}
Let $U$ be a regular cone integrable through rotations and let $u$ be a non-degerate weak solution. % \cancel{satisfying  $W(u, 0) = W(U, 1)$}. 
There are constants $\delta_2 > 0$ and $C_2>0$ (depending only on $U$ and $d$) such that if 
$$r^{-(d+2)}\|u- U\|^2_{L^2(A_{8r})} < \delta_2,$$
\[ W(u, 4r) - W(u,0) <  \delta_2, \]
then there  exists $A\in O(d)$ such that 
\begin{equation}\label{e:quantblowup} 
	\rho^{-(d+2)} \|u-U(A\cdot)\|_{L^2(A_{2\rho})}^2 \leq C_2 (W(u, 4\rho) - W(u,0))
	%\int_{B_{2r}} \frac{|u-U|(x)^2}{|x|^{2+d}} \leq C(W(u, 4r) - W(u,0)).
\end{equation}
for all $0< \rho \leq r$.
% {\cancel{$W(u, \infty) = W(U, 1)$ and} (Zihui: I don't think we ever need this assumption..? The previous argument that might need this assumption can be replaced by using Poincare inequality inductively.)} 
Similarly if 
\begin{equation}\label{asmp:L2close}
	r^{-(d+2)}\|u- U\|^2_{L^2(A_{8r})} < \delta_2
\end{equation}
\begin{equation}\label{asmp:smallfreq}
	W(u, \infty) - W(u,r/4) <\delta_2,
\end{equation}
then there exists $A\in O(d)$ such that 
 \begin{equation}\label{e:quantblowdown} 
 	\rho^{-(d+2)} \|u-U(A\cdot)\|_{L^2(A_{2\rho})}^2 \leq C_2 (W(u, \infty) - W(u,\rho/4))
 	%\int_{B^c_{r/2}} \frac{|u-U|(x)^2}{|x|^{2+d}} \leq C_2 (W(u, \infty) - W(u,r/4)).
 \end{equation} 
 for all $\rho\geq r$.

%Let $u$ be a solution to the AC problem in the sense of Definition \ref{d:weaksol} and let $x_0 \in \partial \{u > 0\}$, $r_j \downarrow 0$ be such that $u(r_jx + x_0)/r_j \rightarrow U(x)$ uniformly on compacta. Then $\lim_{r\downarrow 0} u(rx+x_0)/r = U(x)$. That is, if one blowup is integrable then that blowup is unique. 

%Similarly, if $r_j \uparrow \infty$ and $u(r_jx + x_0)/r_j \rightarrow U(x)$ uniformly on compacta, then $\lim_{r\uparrow 0} u(rx+x_0)/r = U(x)$. That is, if one blowdown is integrable then that blowup is unique. 

%Furthermore, assuming the above, there exists a $C > 1$ and $\delta_0 > 0$ (depending on $U$ and the ambient dimension) such that if $\delta < \delta_0$ and $W(u,x_0, 1) - W(u, x_0,  0) < \delta_0$ (or $W(u, x_0, \infty) - W(u, x_0, 1) < \delta_0$) then $$\int_{B_{1/2}(x_0)} |x-x_0|^{-2}|u(x) - U(x)|^2 + |\nabla u(x) - \nabla U(x)|^2\, dx \leq C(W(u,x_0, 1) - W(u, x_0,  0)) $$ (respectively $$\int_{\mathbb R^d \backslash B_2(x_0)}|x-x_0|^{-2}|u(x) - U(x)|^2 + |\nabla u(x) - \nabla U(x)|^2 \leq C(W(u, x_0, \infty) - W(u, x_0, 1)). $$
\end{lemma}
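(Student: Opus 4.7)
The plan is to iterate Lemma \ref{lemma cone approx} at dyadic scales $\rho_k := 2^{-k} r$ to obtain rotations $A_k \in O(d)$ satisfying
\[ \rho_k^{-(d+2)}\|u - U(A_k\cdot)\|^2_{L^2(A_{2\rho_k})} \leq C_1 e_k, \qquad e_k := W(u, 4\rho_k) - W(u, \rho_k/4). \]
The base case ($k=0$, $A_0 = I$) uses the $L^2$-smallness hypothesis \eqref{asmp:L2close}. At each inductive step I would apply Lemma \ref{lemma cone approx} with $U$ replaced by $U(A_{k-1}\cdot)$, which is itself a regular cone integrable through rotations. Propagating the smallness hypothesis of Lemma \ref{lemma cone approx} to successive scales uses the assumption that $W(u,4r) - W(u,0)$ is small, which forces $e_k$ and hence the $L^2$ deviation at all nearby annuli to remain below $\delta_1$.

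Next, the annuli at consecutive scales share the region $B_{\rho_k} \setminus B_{\rho_k/2}$, so the triangle inequality on this overlap controls $\|U(A_k\cdot) - U(A_{k+1}\cdot)\|_{L^2}$. The integrability assumption yields a non-degeneracy lower bound of the form $\|U(A\cdot) - U(B\cdot)\|_{L^2(A_\rho)} \geq c\rho^{(d+2)/2}\, d(A,B)$ for $A, B$ close in the quotient of $O(d)$ by the stabilizer of $U$, since any one-homogeneous solution to \eqref{eq:lineareq} must be an infinitesimal rotation of $U$. Combined, these give $d(A_k, A_{k+1})^2 \leq C(e_k + e_{k+1})$. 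The telescoping estimate $\sum_k e_k \leq 4(W(u,4r) - W(u,0)) < 4\delta_2$ (each small scale lies in a bounded number of the intervals $[\rho_k/4, 4\rho_k]$) plus the compactness of $O(d)$ produces at least a subsequential limit $A_{k_j} \to A_\infty$.

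The main obstacle is the quantitative rate $d(A_k, A_\infty)^2 \leq C(W(u, 4\rho_k) - W(u,0))$. A naive chain gives only $d(A_k, A_\infty) \leq C\sum_{j\geq k} \sqrt{e_j}$, which need not be summable even when $\sum e_j$ is finite (losing a logarithm under Cauchy--Schwarz). To bridge this gap I would combine Lemma \ref{lemma cone approx} (distance to cone bounded by energy drop on an annulus) with Lemma \ref{l:simontrick}, which provides the reverse estimate $|W(u,\rho) - W(u,0)| \leq C\rho^{-(d+2)}\|u - U(A\cdot)\|^2_{L^2(A_{4\rho})}$ for any rotation $A$ close to optimal at scale $\rho$. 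This two-sided comparison closes a feedback loop and produces a Gr\"onwall-type self-improvement that yields the desired quadratic bound; as a by-product it forces $A_\infty$ to be unique.

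Finally, given $\rho \leq r$, pick $k$ with $\rho_{k+1} \leq \rho \leq \rho_k$ and note $A_{2\rho} \subset A_{2\rho_k} \cup A_{2\rho_{k+1}}$. For $j \in \{k, k+1\}$, triangle inequality gives
\[ \|u - U(A_\infty\cdot)\|^2_{L^2(A_{2\rho_j})} \leq 2\|u - U(A_j\cdot)\|^2_{L^2(A_{2\rho_j})} + 2\|U(A_j\cdot) - U(A_\infty\cdot)\|^2_{L^2(A_{2\rho_j})}, \]
whose right-hand side is bounded by $C\rho^{d+2}(W(u,4\rho) - W(u,0))$ using the bound on $e_j$ and the quantitative convergence of $A_j$ established above; summing the two pieces and dividing by $\rho^{d+2}$ yields \eqref{e:quantblowup}. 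The blowdown estimate \eqref{e:quantblowdown} is proven by the symmetric argument at growing scales $\rho_k := 2^k r$, with the finite limit $W(u,\infty) := \lim_{\rho\to\infty} W(u,\rho)$ (which exists by monotonicity together with the smallness hypothesis \eqref{asmp:smallfreq}) playing the role of $W(u,0)$.
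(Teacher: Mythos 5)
Your skeleton matches the paper's: dyadic rescaling, inductive application of Lemma \ref{lemma cone approx} at each scale, propagation of the smallness hypothesis via the Poincar\'e inequality \eqref{e:MNpoinc} and monotonicity of $W(u,\cdot)$, and a triangle inequality on the shared annulus to control $\|U(A_k\cdot)-U(A_{k+1}\cdot)\|_{L^2(\mathbb{S}^{d-1})}$. Your identification of the core difficulty --- that the naive chain only yields $d(A_k,A_\infty)\lesssim\sum_{j\geq k}\sqrt{e_j}$, which need not converge even though $\sum_j e_j<\infty$ --- is exactly right, and it is the subtle point of the lemma.

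Where you diverge from the paper is in how this summability issue is resolved. You propose interleaving Lemma \ref{l:simontrick} with Lemma \ref{lemma cone approx} to establish geometric decay of the energy drop, and then summing the now geometrically-small tails. This is the classical Leon Simon / Allard--Almgren mechanism, and the paper does deploy precisely this combination --- but only afterward, in the proof of Theorem \ref{thm AA}, where $W(u,\infty)-W(u,4r)\leq\tau\,(W(u,\infty)-W(u,r/4))$ is derived \emph{from} Lemma \ref{l:stitching}. The paper's proof of Lemma \ref{l:stitching} itself deliberately avoids Lemma \ref{l:simontrick}: it instead rewrites each $\|U(A_j\cdot)-U(A_{j+1}\cdot)\|_{L^2(\mathbb{S}^{d-1})}$, via the Weiss derivative formula \eqref{eq weissder}, as $\lesssim\|(x\cdot\nabla u-u)/|x|^{(d+2)/2}\|_{L^2(\Lambda_j)}$ over dyadic annuli $\Lambda_j$ with bounded overlap, and then telescopes directly. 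Your caution here is in fact warranted: bounding the $\ell^1$ sum $\sum_j\|g\|_{L^2(\Lambda_j)}$ by a constant multiple of $\|g\|_{L^2(\cup_j\Lambda_j)}$ uses more than bounded overlap (which by itself only gives the $\ell^2$ version, i.e.\ a $\sqrt{\ell}$ loss), so the self-improving route you sketch is a safer way to close the argument. One small imprecision in your write-up: Lemma \ref{l:simontrick} produces $|W(U(A\cdot),1)-W(u,1)|$ on the left-hand side, and the substitution $W(U,1)=W(u,0)$ (or $W(u,\infty)$ in the blowdown case) is what you are implicitly invoking; it is worth stating this explicitly, as is the fact that the rotation $A$ fed into Lemma \ref{l:simontrick} must be the one supplied by Lemma \ref{lemma cone approx} at that scale, so that the $L^2$ norm on the right-hand side is genuinely small. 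With those details filled in, your route is sound; note, however, that it effectively proves the sharper power-rate conclusion of Theorem \ref{thm AA} en route, whereas the paper keeps Lemma \ref{l:stitching} as a cruder stitching step that does not presuppose any decay rate.
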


\begin{remark}
    It follows immediately from \eqref{e:quantblowdown}, as well as \eqref{eq:ctrldiff} and the monotonicity of $W(u,\cdot)$, that $U(A\cdot)$ is the unique blowdown limit of $u$, with a convergence rate controlled by $W(u,\infty) - W(u, \rho/4)$.
\end{remark}

\begin{proof}
	We prove the blow-down case in detail; the proof for the blow-up case is, mutatis mutandis, the same. After a harmless rescaling, it suffices to prove the Lemma with $r = 1$. We start proving the following intermediate result:
    
	 \medskip
    
    \noindent{ \it Claim:} Set $\rho_{i} = 2^i$. There exist $\delta_2>0$ and $A_i \in O(d)$ such that
	\begin{equation}\label{it:L2stepi}
		\rho_i^{-(d+2)}\|u-U(A_i \cdot) \|_{L^2( A_{2\rho_i} )}^2 \leq C_1 \left(W(u,4\rho_i) - W(u,\rho_i/4) \right).
	\end{equation}
    We proceed by induction. For the base case, we establish \eqref{it:L2stepi} by a direct application of Lemma \ref{lemma cone approx}. Suppose now that, for a fixed $i \in \mathbb{Z}_+$  \eqref{it:L2stepi} hold. We aim to show that \eqref{it:L2stepi} is also satisfied for $i + 1$.
    
	By applying a rescaled version of the Poincar\'e inequality \eqref{e:MNpoinc} to $u-U(A_i\cdot) \in W^{1,2}(A_{8\rho_{i+1}})$, we get
	\begin{eqnarray*}\notag
	    	 \rho_{i+1}^{-(d+2)} \|u-U(A_i \cdot)\|_{L^2( A_{8\rho_{i+1}})}^2 & \leq& C (W(u, 8\rho_{i+1}) - W(u,\rho_{i+1}/8))\\ \notag
             &&+ C  \rho_{i}^{-(d+2)} \|u-U(A_i \cdot)\|_{L^2(A_{2\rho_i}  )}^2 \\ 
			& \leq& (C+CC_1) \cdot (W(u, \infty) - W(u,1/4)),
	\end{eqnarray*}
	where in the last line we use the inductive hypothesis \eqref{it:L2stepi} for $\rho_i$ and the monotonicity of $r\to W(u,r)$. Notice that this does not close the argument since we do not know whether $C+CC_1 \leq C_1$. However, it does imply an a priori smallness
	\[ \rho_{i+1}^{-(d+2)} \|u-U(A_i \cdot)\|_{L^2( A_{8\rho_{i+1}} )}^2 < \delta_1, \]
	if $\delta_2$ in the assumption \eqref{asmp:smallfreq} satisfies $(C+CC_1)\delta_2< \delta_1$. Thus we may apply Lemma \ref{lemma cone approx} to $u(A_i^{-1} \cdot)$ and the scale $\rho_{i+1}$, and conclude that there exists $T_{i+1}\in O(d)$ such that
	\begin{eqnarray}\label{tmp:L2stepi+1}
			 \rho_{i+1}^{-(d+2)}\|u-U(T_{i+1} A_i \cdot)\|_{L^2( A_{2\rho_{i+1}})}^2 
			\leq C_1  \left(W(u,4\rho_{i+1}) - W(u,\rho_{i+1}/4) \right). 
	\end{eqnarray} 
By setting $A_{i+1} = T_{i+1}A_i \in O(d)$, we conclude the proof of the claim.

\medskip

On the other hand, since $A_{2\rho_{i+1}}$ and $A_{2\rho_{i}}$ have non-trivial overlap, we may combine \eqref{it:L2stepi} and \eqref{tmp:L2stepi+1} to deduce 
		\begin{eqnarray*}
			 \|U(A_i\cdot) - U(A_{i+1} \cdot) \|_{L^2(A_{2 \rho_i} \cap A_{2\rho_{i+1}}) } &\leq & \|u-U(A_i\cdot) \|_{L^2(A_{2\rho_i})} + \|u-U(A_{i+1} \cdot)\|_{L^2( A_{2\rho_{i+1}} )} \\
			& \leq & C'_1 \sqrt{\rho_{i+1}^{d+2} (W(u,4\rho_{i+1}) - W(u, \rho_{i+1}/8))}.
		\end{eqnarray*}
	Since $U$ is one-homogeneous, it follows that
	\begin{equation}\label{tmp:UAi}
		\|U(A_i\cdot) - U(A_{i+1} \cdot) \|_{L^2( \mathbb{S}^{d-1}) } \leq C''_1 \sqrt{W(u,4\rho_{i+1}) - W(u, \rho_{i+1}/8) }, 
	\end{equation} 
	where the constant $C''_1$ only depends on $C_1$ and the dimension $d$. To prove \eqref{e:quantblowdown}, we also need to analyze the oscillation of the matrices $A_i$'s. Combining the Weiss monotonicity formula \eqref{eq weissder} and \eqref{tmp:UAi}, we have that
	\begin{equation*}
		\|U(A_i\cdot) - U(A_{i+1} \cdot) \|_{L^2(\mathbb{S}^{d-1}) } \leq \sqrt{2} C''_1 \left\| \dfrac{x\cdot \nabla u - u }{ |x|^{\frac{d+2}{2}} } \right\|_{L^2(B_{4\rho_{i+1}} \setminus B_{\rho_{i+1}/8})}.
	\end{equation*}
	Simple arithmetic calculations show that each set $B_{4\rho_{i+1}} \setminus B_{\rho_{i+1}/8}$ has non-trivial intersection with at most $9$ sets from the sequence of sets $\{B_{4\rho_{k}} \setminus B_{\rho_{k}/8} \}_k$. Therefore for any $\ell\in \mathbb{N}$ and $i\in \mathbb{N}$
	\begin{equation*}
		\begin{aligned}
			\|U(A_i\cdot) - U(A_{i+\ell} \cdot) \|_{L^2(\mathbb{S}^{d-1}) } & \leq \sum_{j=i }^{i+ \ell-1 } \|U(A_j\cdot) - U(A_{j+1} \cdot) \|_{L^2(\mathbb{S}^{d-1}) } \\
			& \leq 8\sqrt{2} C''_1 \left\| \dfrac{x\cdot \nabla u - u }{ |x|^{\frac{d+2}{2}} } \right\|_{L^2(B_{4\rho_{i+\ell}} \setminus B_{\rho_{i+1}/8})} \\
			& \leq 8 C''_1 \sqrt{W(u, 4\rho_{i+\ell}) - W(u,\rho_{i+1}/8) } \\
			& \leq 8C''_1 \sqrt{W(u, \infty) - W(u,\rho_{i}/4)}.
		\end{aligned}
	\end{equation*}
	By compactness of $O(d)$, we can find a convergent subsequence of $A_{j}$ converging to $A\in O(d)$. Thus, for this subsequential limit we have that for any $i \in \mathbb{N}$
	\begin{equation}\label{eq limitA}
	    \|U(A_i \cdot) - U(A\cdot)\|_{L^2(\Omega \, \cap \mathbb{S}^{d-1})} \leq 18C''_1 \sqrt{W(u, \infty) - W(u,\rho_{i}/4)}. 
	\end{equation}
    Notice that since the right hand side in \eqref{eq limitA} converges to zero as $i\to \infty$, the matrix $A$ is uniquely determined. By combining the claim \eqref{it:L2stepi} with \eqref{eq limitA}, we conclude that
	\begin{eqnarray*}
	    	 \rho_i^{-(d+2)}\|u-U(A \cdot) \|_{L^2(A_{2\rho_i} )}^2 &\leq& C\rho_i^{-(d+2)}\|u-U(A_i \cdot) \|_{L^2( A_{2\rho_i} )}^2\\
             &&+ \|U(A_i \cdot) - U(A\cdot)\|_{L^2( \mathbb{S}^{d-1})}^2 \notag \\
		 &\leq& CC_1 \left(W(u,\infty) - W(u,\rho_i/4) \right).
	\end{eqnarray*}

\end{proof}

We can now complete the proof of Theorem \ref{thm AA}. 

\begin{proof}[Proof of Theorem \ref{thm AA}]
    We give the proof for blowdown with a rate, the proof for the blowup is identical. We may assume $x_0 = 0$ for simplicity.
    
    By assumption, we have
    \[ u(r_jx )/r_j \rightarrow U \text{ in } L^2_{\mathrm{loc}} \quad \text{ as } r_j \nearrow \infty. \]
    In particular, it follows that $W(u,\infty) = W(U,1)$. Combined with Lemma \ref{l:simontrick} and the smallness of $\|u - U\|_{L^2(A_8)}$, we have
    \[ W(u,\infty) - W(u,1) = |W(U,1) - W(u,1)| \leq C\delta_0^2 < \delta_2,  \]
    by choosing $\delta_0$ sufficiently small.
    Note that the smallness assumptions \eqref{asmp:L2close}, \eqref{asmp:smallfreq} are satisfied, with $r=1$. Thus Lemma \ref{l:stitching} implies that $U$ is the unique blowdown limit of $u$ (with $A$ being the identity matrix) and, furthermore, for $r\geq 1$
     \begin{equation}\label{e:quantblowdown2} 
 	r^{-(d+2)} \|u -U\|_{L^2(A_{2r})}^2 \leq C_2 (W(u,\infty) - W(u,r/4)) \leq C_2 \delta_2.
 \end{equation}    
    
    %$U$ is a blowdown limit of $u$ at infinity. 
%    Thus for $r_0>1$ sufficiently large $u$ satisfies the smallness assumptions \eqref{asmp:L2close}, \eqref{asmp:smallfreq} in Lemma \ref{l:stitching}. It follows that $U$ is the unique blowdown limit of $u$ (with $A$ being the identity matrix) and, furthermore, we can take $r_0>1$ large enough  for $r\geq r_0$
%     \begin{equation}\label{e:quantblowdown2} 
% 	r^{-(d+2)} \|u-U\|_{L^2(A_{2r})}^2 \leq C_2 \delta_1.
% \end{equation} 
        
    To show the polynomial decay rate of $u-U$, we notice that thanks to \eqref{e:quantblowdown2} we can invoke Lemma \ref{lemma cone approx} to guarantee the existence of $O_r \in O(d)$ such that
    \begin{equation}\label{eq smallnessr}
        r^{-(d+2)} \|u-\tilde{U}(O_r\cdot) \|_{L^2(A_{2r}\cap \{u>0\})}^2 \leq C_2 (W(u,4r) - W(u,r/4)).\footnote{Here we appeal to \eqref{eq enhancedquadestimate} with a rotation instead of the estimate \eqref{e:quantblowdown} for $u-U$, because we want the enhanced estimate controlling $u-\tilde{U}$ in terms of the change of $W(u,\cdot)$.}
    \end{equation} 
    In turn \eqref{eq smallnessr} provides the smallness required to apply (a rescaled version) of Lemma \ref{l:simontrick} to $u$ and $U(O_r\cdot)$ so that
    \begin{eqnarray*}
         W(u,\infty) - W(u,r) &=& |W(U(O_r \cdot),r) - W(u,r)|\\ \notag
        &\leq& C r^{-(d+2)} \|u- \tilde{U}(O_r \cdot)\|_{L^2(A_{2r}\cap \overline{\{u>0\}} )}^2 \\ \notag
    &\leq &  CC_2 (W(u,4r) - W(u, r/4) ) \notag \\
    	& \leq& CC_2 \left[ W(u,\infty) - W(u, r/4) -(W(u,\infty) - W(u,4r)) \right].
    \end{eqnarray*}
    Since the term on the left is bounded from below by $W(u,\infty) - W(u, 4r)$, it follows that
    \begin{equation}\label{eq:decayW}
    	W(u,\infty) - W(u, 4r) \leq \tau (W(u,\infty) - W(u,r/4)) 
    \end{equation} 
    with $\tau = CC_2/(1+CC_2) \in (0,1)$.
    Combining the $L^2$ estimate (the first inequality in \eqref{e:quantblowdown2}) and the decay in \eqref{eq:decayW}, we deduce the desired convergence \eqref{eq:rate}.
    \end{proof}

\section{Classification of global solutions asymptotic to strongly integrable cones: Proof of Theorem \ref{thm main}}\label{s:smooth}

Combining the general theory of Section \ref{sec: conv} with the rate of convergence given by Theorem \ref{thm AA}, we have the following smooth parameterization result, which will help with the proof of Theorem \ref{thm main}.

\begin{lemma}\label{lm:FBreg}
Let $u$ be a non-degenerate weak solution in the sense of Definition \ref{d:weaksol} and assume that there exist $R_j\uparrow \infty$ such that $u(R_jx)/R_j \rightarrow U(x)$ uniformly on compacta. Further assume that $U$ is a regular integrable cone (see Definition \ref{def: intcone}). 

Then there exist $R_1 > 0$ and an open set $\mathcal O \supset \{U > 0\}\setminus B_{R_1}$ such that $u$ can be extended analytically to a function $\tilde{u}$ in $\mathcal O$. 

Furthermore, there exists an $\eta \in C^\omega(\partial \{U > 0\}\backslash B_{R_1/2}(0))$ such that \begin{equation}\label{e:analyticparam} \partial \{u > 0\}\backslash B_{R_1}(0) = \{x + \nu(x)\eta(x)\mid x\in \partial \{U > 0\} \backslash B_{R_1}(0)\},\end{equation} where $\nu$ is the inward pointing unit normal to $\partial \{U > 0\}$. 

Finally there exist $C > 0$ and $\alpha_0 \in (0,1)$ (depending on $U$ but not $u$) such that both $\eta$ and $v:= \tilde{u} -U$ satisfy the following estimates:

%Let $u,b$ be non-degenerate weak solutions in the sense of Definition \ref{d:weaksol}. Further assume that $b$ is one-homogeneous and $\partial \{b > 0\}$ is smooth away from the origin.  Finally, assume there exists a $\alpha_0 \in (0,1), r_0 > 0, C > 0$ such that $$\|u - b\|_{L^\infty(B_R^c)} < CR^{1-\alpha_0}, \forall R > r_0.$$

  % Then there exists an $r_1 > 0$ such that
  %$$\partial \{u > 0\}\backslash B_{r_1}(0) = \{x + \nu(x)\eta(x)\mid x\in \partial \{b > 0\}\},$$
 %  where $\nu$ is the inward pointing unit normal to $\partial \{b > 0\}$ and $\eta \in C^{\omega}(\partial \{b > 0\}\backslash B_{r_1/2}(0))$.\footnote{We use $C^\omega$ here to denote the class of analytic functions.}

    %Furthermore there exists a constant $C > 0$ (depending on the constants above and the dimension) such that for any $R > r_1$ we have the estimates that 
    \begin{equation}\label{e:etaandvdecay}\begin{aligned}\left|\frac{v(x)}{|x|}\right| + &|\nabla v(x)|+ ||x|\nabla^2 v(x)| \leq C|x|^{-\alpha_0}, \quad \forall x\in \overline{\{U > 0\}}\setminus B_{2R_1}(0), \\
    \left|\frac{\eta(x)}{|x|}\right| + &|\nabla \eta(x)| + ||x|\nabla^2 \eta(x)| \leq C|x|^{-\alpha_0}, \quad \forall x\in \partial \{U  > 0\}\setminus B_{2R_1}(0).
    \end{aligned}
    \end{equation}
\end{lemma}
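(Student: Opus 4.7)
The plan is to combine the H\"older convergence rate from Theorem \ref{thm AA} with the local analytic extension and free-boundary parametrization provided by Lemma \ref{lemma: extension}, and to glue the resulting local objects via unique continuation. The exponent $\alpha_0$ in \eqref{e:etaandvdecay} will be the exponent $\alpha$ produced by Theorem \ref{thm AA}. Since $u(R_jx)/R_j \to U$ uniformly on compacta, after choosing a single sufficiently large $R_j$ as a base scale I may apply Theorem \ref{thm AA} in its blow-down form to conclude that $U$ is the unique blow-down of $u$ and that, writing $u_r(x) := u(rx)/r$, there exist $\alpha \in (0,1)$ and $C > 0$ with $\|u_r - U\|_{L^2(A_2)} \leq Cr^{-\alpha}$ for all sufficiently large $r$. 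Fix $R_1$ large enough that for every $r \geq R_1/8$ the smallness $\|u_r - U\|_{L^2(A_8)} \leq \delta_0$ of Lemma \ref{lemma: extension} holds.

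At each such scale, Lemma \ref{lemma: extension} produces an analytic harmonic extension $\widetilde{u_r}$ of $u_r$ across its free boundary on a conical neighborhood $\Omega_\mu \cap A_3$ of $\overline{\{U>0\}} \cap A_3$, together with an analytic $\eta_r$ on $\partial\{U>0\} \cap A_2$ parametrizing $\partial\{u_r>0\} \cap A_2$ over $\partial\{U>0\} \cap A_2$ as in that lemma. Critically, the conical opening $\mu = \mu(U) > 0$ depends only on $U$ once $\delta_0$ is fixed, so the neighborhoods $\Omega_\mu$ are identical at every scale. Pulling these extensions back and setting $r_k = 2^k$ for $k$ large, the dilated sets $r_k(\Omega_\mu \cap A_3)$ cover an open conical neighborhood $\mathcal O \supset \{U>0\} \setminus B_{R_1}$, and any two overlapping dilated extensions agree with $u$ on a nonempty open piece of $\{u>0\}$ contained in their intersection. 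By unique continuation for real-analytic harmonic functions they therefore coincide on the connected component of the intersection inside $\mathcal O$, yielding a single analytic $\tilde u$ on $\mathcal O$ extending $u$. An analogous gluing for the graph functions produces a single $\eta \in C^\omega(\partial\{U>0\} \setminus B_{R_1/2})$ satisfying \eqref{e:analyticparam}.

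For the decay estimates \eqref{e:etaandvdecay}, I chain the scale-$r$ $L^2$ bound of Theorem \ref{thm AA} through the $L^2$-to-$C^2$ upgrade of Lemma \ref{lemmaL2Linfty} and the second estimate in Lemma \ref{l:linfinitytoc2} to conclude $\|\widetilde{u_r} - U\|_{C^2(\overline{\{U>0\}} \cap A_2)} \leq C r^{-\alpha}$; Lemma \ref{l:c2oneta} then yields $\|\eta_r\|_{C^2(\partial\{U>0\} \cap A_2)} \leq Cr^{-\alpha}$. The one-homogeneity of $U$ gives $v(y) = r(\widetilde{u_r} - U)(y/r)$ and $\eta(y) = r\eta_r(y/r)$, so the unit-scale $C^2$ bounds translate into exactly the decay claimed in \eqref{e:etaandvdecay} with $\alpha_0 := \alpha$: $|v(y)|/|y|$ and $|\nabla v(y)|$ are $O(r^{-\alpha}) = O(|y|^{-\alpha})$ since $|y| \asymp r$, while $|\nabla^2 v(y)|$ picks up an extra $r^{-1}$ factor from the Hessian scaling which, multiplied by $|y| \asymp r$, gives the same $|y|^{-\alpha}$ rate; the analogous computation applies to $\eta$.

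The main obstacle is really a bookkeeping one: verifying that the conical opening $\mu$ in Lemma \ref{lemma: extension} does not shrink as the scale $r$ grows, so that the local extensions at different scales overlap in an open set on which unique continuation applies. A secondary subtlety is controlling the $L^\infty$ (and thence $C^2$) norm of $\widetilde{u_r} - U$ on the thin region $\overline{\{U>0\}} \setminus \{u_r>0\}$, where $\widetilde{u_r}$ differs from $u_r = 0$; this is handled using the bound $|\widetilde{u_r}| \leq C\,\mathrm{dist}(\cdot,\partial\{u_r>0\})$ from Lemma \ref{lemma: extension}(4) together with the fact that the Hausdorff distance between $\partial\{u_r>0\}$ and $\partial\{U>0\}$ is itself of order $r^{-\alpha}$, as follows from Lemmas \ref{lemmaL2Linfty} and \ref{l:c2oneta}.
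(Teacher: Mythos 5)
Your proposal is correct and follows essentially the same route as the paper's proof: apply Theorem \ref{thm AA} to get an $L^2$ decay rate at every dyadic scale, invoke Lemma \ref{lemma: extension} at each scale to extend analytically and parametrize the free boundary, then upgrade via Lemmas \ref{lemmaL2Linfty}, \ref{l:linfinitytoc2}, and \ref{l:c2oneta} and unwind the scaling. Your explicit gluing of the scale-by-scale extensions by unique continuation, and the observation that the conical opening $\mu$ is uniform in the scale, fill in a detail the paper treats implicitly; and your thin-region $L^\infty$ bound via the distance estimate $|\tilde u| \lesssim \mathrm{dist}(\cdot,\partial\{u>0\})$ is a mild variant of the paper's $L^2$ comparison on $\{U>0\}\setminus\{u>0\}$, both of which lead to the same conclusion.
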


\begin{proof}
By Theorem \ref{thm AA}, we have that there exists an $R_1 > 0$ (depending on $u$) such that if $r > R_1$ then 
$$\|u(rx)/r - U\|_{L^2(B_8\backslash B_{1/8})} \leq Cr^{-\alpha_0}.$$

Perhaps increasing $R_1$, we can guarantee that $r^{-\alpha_0} \leq R_1^{-\alpha_0} \leq \delta_0$ (the parameter from Lemma \ref{lemma: extension}). Invoking Lemma \ref{lemma: extension}, we have the existence of $\tilde{u}$ and $\eta$. 

Let us assume for a second that we can establish the $L^2$-bound \begin{equation}\label{e:tildedecay}\|\tilde{u}(rx)/r - U\|_{L^2(\{U > 0\}\cap (B_8\backslash B_{1/8}))} \leq Cr^{-\alpha_0}.\end{equation} Then invoking Lemmas \ref{l:linfinitytoc2} and \ref{lemmaL2Linfty} (but reversing the role of $u, U$ which causes no problems) we will have the desired bounds for $v$ in \eqref{e:etaandvdecay}. The desired bounds for $\eta$ come from Lemma \ref{l:c2oneta}.

Thus we are left with proving the enhanced decay on the $L^2$ norm of $\tilde{u} - U$ in $\{U > 0\}$, i.e. \eqref{e:tildedecay}. Note that $\tilde{u}$ in $\mathcal O\cap \{u =0\}$ satisfies the same Lipschitz condition as $u$ (this follows because $u$ has a sign in this regime, perhaps shrinking $\mathcal O$ slightly, see the proof of Step (4) in Lemma \ref{lemma: extension}). Using the qualitative smooth convergence of $\partial \{u > 0\}$ to $\partial \{U > 0\}$ we know then that if $y\in \{U > 0\}\backslash \{u > 0\}$ we can write $y= x + t\eta(x)\nu(x)$ for some $x\in \partial \{U > 0\}$ and $U(y) \simeq t\eta(x)$ whereas $-\tilde{u}(y) \simeq (1-t)\eta(x)$. From here it follows that $$\|\tilde{u}\|_{L^2((\{U > 0\}\backslash\{u > 0\})\cap (B_{8r}\backslash B_{r/8}))} \simeq \|U\|_{L^2((\{U > 0\}\backslash\{u > 0\})\cap (B_{8r}\backslash B_{r/8}))}.$$ Thus $\|\tilde{u} - U\|_{L^2(\{U>0\}\cap B_{8r}\backslash B_{r/8})} \simeq \|u - U\|_{L^2(\{U>0\}\cap B_{8r}\backslash B_{r/8})}$ (by the triangle inequality) and so \eqref{e:tildedecay} follows from the decay on the $L^2$-norm of $u-U$. 

\end{proof}

%From now on, we denote $\Omega := \{U>0\}$, $\Omega' :=\{u>0\}$ and $C=\partial\Omega$. Recall that the domains $\Omega, \Omega'$ are not necessarily nested. Recall we have shown in Lemma \ref{lm:FBreg} that the free boundary $\partial\Omega'$ is graphical over the cone. Namely
%\begin{equation}\label{eq:graph}
%\partial \Omega' \setminus B_{R_0}(0) = \Graph_C(\eta) \setminus B_{R_0}(0), 
%\end{equation}
%where $\Graph_C(\eta) = \{x + \eta(x) \nu(x) : x\in C \}$, $\nu(x)$ denotes the unit normal vector pointing inside $\Omega$, and the map $\eta: C \to \mathbb{R}$ is analytic and satisfies
%\begin{equation}\label{eq:decayFB}
 %   R^{-1} \cdot \|\eta\|_{L^\infty(\{R<|X|<2R\} )} + \|\nabla \eta\|_{L^\infty(\{R<|X|<2R\} )} + R \cdot \|\nabla^2 \eta\|_{L^\infty(\{R<|X|<2R\} )} = O(R^{-\alpha_0} ) 
%\end{equation} 
%for some $\alpha_0 \in (0,1)$, for any $R$ sufficiently large.
%In particular, as long as $R_0$ is sufficiently large, we may extend $u$ across the boundary $\partial\Omega'$ using Cauchy–Kovalevskaya theorem so it remains a harmonic function in (an open set containing) $\overline{\Omega} \setminus B_{R_0}(0)$. Consider the vertical displacement of $u$ with respect to $b$, namely the function $v:= u-b$ on $\Omega \setminus B_{R_0}(0)$. Next, we show that it satisfies the linearized equation \eqref{eq:lineareq} with a quadratic perturbation.

Combining the above Lemma \ref{lm:FBreg} with Lemma \ref{lemma: extension} Part (6) immediately yields the following:

\begin{lemma}\label{lm:quadraticp}
    Let $u, U$ be as in Lemma \ref{lm:FBreg} and define as before $v:= \tilde{u} - U$. Then there exists an $R_1$ (as in Lemma \ref{lm:FBreg}) such that whenever $R > R_1$, $v$ satisfies the differential equation
    
    %Let $A_R = B_{2R}\setminus B_R$. The following is true as long as $R$ is sufficiently large. For any $\alpha>0$, assume that $v$ and $\eta$ satisfies 
    %\begin{equation}\label{as:decayv}
    %    R^{-1} \cdot \|v\|_{L^\infty(\Omega \cap A_R )} + \|\nabla v\|_{L^\infty(\Omega \cap A_R )} = O(R^{-\alpha}),
   % \end{equation}
    %\begin{equation}\label{as:decayeta}
    %    R^{-1} \cdot \|\eta\|%_{L^\infty(C \cap A_R )} + \|\nabla \eta\|_{L^\infty(C \cap A_R )} = O(R^{-\alpha}).
   % \end{equation}
    
    \begin{equation}\label{eq:PL}
        \left\{\begin{array}{ll}
            \Delta v = 0, & \{U > 0\} \cap B_{2R}\backslash B_R \\
            \partial_{\nu} v + Hv = g_v, %O\left( \left(\frac{\|v\|_{L^\infty}}{R} \right)^2 \right), 
            & \partial \{U > 0\} \cap B_{2R}\backslash B_R,
        \end{array} \right.\tag{PL}
    \end{equation}
    where $g_v$ is a $C^2$ function with decay $O(R^{-2\alpha})$.
    % which is a perturbation of the linearized equation \eqref{eq:lineareq}. Here the $L^\infty$ norm are restricted to the region $\Omega \cap B_{2R} \setminus B_R$.
\end{lemma}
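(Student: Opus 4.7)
The plan is to apply Lemma \ref{lemma: extension} part (6) to the pair $(\tilde u, U)$ and convert the resulting boundary condition into the form claimed, by quantifying the deviation of the function $h$ from the mean curvature $H$. The interior equation $\Delta v = 0$ is immediate: $U$ is harmonic by construction and $\tilde u$ is the Cauchy--Kovalevskaya extension of the harmonic function $u$, hence harmonic on $\mathcal O$.

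For the boundary condition, I will first rescale by setting $u_R(x) := u(Rx)/R$, so that by Theorem \ref{thm AA} together with the one-homogeneity of $U$, one has $\|u_R - U\|_{L^2(A_8)} \leq CR^{-\alpha}$ for $R \geq R_1$ (taking $R_1$ large enough so that this quantity falls below the threshold $\delta_0$ of Lemma \ref{lemma: extension}). Applying Lemma \ref{lemma: extension} part (6) to $u_R$ and $U$ on the unit-scale annulus yields
\[
\partial_\nu v_R + h_R\, v_R \;=\; -\tfrac{1}{2}|\nabla v_R|^2 + O(v_R^2)
\]
on $\partial\{U>0\} \cap A_2$, where $v_R := \tilde u_R - U = v(R\,\cdot)/R$ and $h_R$ is the coefficient from \eqref{eq:h}. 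I would then rewrite this as
\[
\partial_\nu v_R + H\, v_R \;=\; (H - h_R) v_R - \tfrac{1}{2}|\nabla v_R|^2 + O(v_R^2) \;=:\; g_{v_R},
\]
and rescale back: since $\partial\{U>0\}$ is a cone and $H$ is $(-1)$-homogeneous along it, the left-hand side transforms exactly into $\partial_\nu v + H v$ on $\partial\{U>0\} \cap (B_{2R}\setminus B_R)$, giving the claimed boundary condition with $g_v(x) := g_{v_R}(x/R)$.

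To verify the decay, I will combine \eqref{e:etaandvdecay} with Lemmas \ref{l:linfinitytoc2} and \ref{lemmaL2Linfty} to obtain $\|v_R\|_{C^2(\overline{\{U>0\}}\cap (B_2\setminus B_1))} \leq C R^{-\alpha}$. The remark following Lemma \ref{lemma: extension} states that $h_R$ is $C^2$-close to $-\partial_{\nu\nu} U = H$, with the difference controlled by $\|\tilde u_R - U\|_{C^2}$; consequently $\|H - h_R\|_{L^\infty} \leq CR^{-\alpha}$. Multiplying through, each of the three summands in $g_{v_R}$ is of size $R^{-2\alpha}$ on the unit annulus, which transfers to $|g_v| \leq C R^{-2\alpha}$ on $B_{2R}\setminus B_R$. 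The $C^2$ regularity of $g_v$ at the same rate follows by applying Schauder estimates to the harmonic function $v_R$ and using that $g_{v_R}$ is an explicit smooth algebraic combination of $v_R$, $\nabla v_R$, $H$, and $h_R$ (the latter two being $C^\infty$ on $\partial\{U>0\}$ away from the origin by Lemma \ref{lm:FBreg}). I do not anticipate a substantive obstacle here: the only bookkeeping is making sure that the decay of $v$ combines multiplicatively with itself or with $H - h_R$ to produce the doubled exponent $2\alpha$, which is precisely the quadratic structure already emphasized in the statement of Lemma \ref{lemma: extension}(6).
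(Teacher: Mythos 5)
Your proposal is correct and fills in the details behind the paper's one-sentence justification, which simply says Lemma \ref{lm:quadraticp} follows immediately by combining Lemma \ref{lm:FBreg} with Lemma \ref{lemma: extension} Part (6); your rescaling to the unit annulus, the rewriting of the coefficient $h$ as $H + (h - H)$ with $h - H = O(R^{-\alpha})$, and the multiplication of the two $O(R^{-\alpha})$ factors to get the doubled exponent are exactly the mechanism the authors are invoking implicitly.
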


Before starting the proof of Theorem \ref{thm main}, we need one more technical result on the connectedness of the positivity set $\{u > 0\}$. 

\begin{lemma}\label{lemma connectedness}
    Let $u$ be a non-degenerate weak solution in the sense of Definition \ref{d:weaksol}, defined on all of $\mathbb R^d$. Assume that $u(R_jx)/R_j \rightarrow U$ locally uniformly on compacta, where $U$ is a regular cone in the sense of Definition \ref{def regular cone}. Then $\{u > 0\}$ is connected. 
\end{lemma}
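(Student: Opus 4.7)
The plan is to combine two ingredients: (a) every connected component of $\{u>0\}$ is unbounded, and (b) for all sufficiently large $j$ the annular slice
\[ \Omega_j:=\{u>0\}\cap (B_{2R_j}\setminus \overline{B_{R_j/2}}) \]
is itself connected. Together these force any two components of $\{u>0\}$ to coincide through $\Omega_j$, since any unbounded component must meet $\Omega_j$.

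For (a), I appeal to the maximum principle. If $A\subset \{u>0\}$ were a bounded component, then $A$ is open with $\partial A\subset \partial\{u>0\}$, so $u|_{\partial A}\equiv 0$ by continuity of $u$. The distributional equation $\Delta u = \mathcal{H}^{d-1}|_{\partial_*\{u>0\}}$ forces $u$ to be harmonic on the open set $A$, continuous up to $\overline{A}$, with vanishing boundary values; the maximum principle then gives $u\equiv 0$ on $\overline{A}$, contradicting $A\subset\{u>0\}$.

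For (b), consider the rescalings $u_j(x):=u(R_jx)/R_j$. The local uniform convergence $u_j\to U$ implies $L^2$-smallness of $u_j-U$ on $A_8$ for $j$ large, so Lemma \ref{lemma: extension} applies. That lemma gives, for $j$ large, an analytic graph parametrization of $\partial\{u_j>0\}\cap (B_3\setminus \overline{B_{1/3}})$ over $\partial\{U>0\}\cap (B_3\setminus \overline{B_{1/3}})$ with small norm. Hence $\{u_j>0\}\cap (B_2\setminus \overline{B_{1/2}})$ is diffeomorphic to the model set $\{U>0\}\cap (B_2\setminus \overline{B_{1/2}})$; the latter is path-connected since $\{U>0\}$ is a cone whose link $\Sigma:=\{U>0\}\cap \mathbb{S}^{d-1}$ is connected (the excluded ``two-phase'' object $|x\cdot e|$ is exactly the kind of configuration with disconnected link that Definition \ref{def regular cone} rules out). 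Rescaling by $R_j$ yields connectedness of $\Omega_j$.

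Finally, suppose $A\neq B$ are two distinct components of $\{u>0\}$. By (a) both are unbounded, and being open connected subsets of $\mathbb{R}^d$ they are path-connected. Fix $a_0\in A$, $b_0\in B$, and take $j$ so large that $|a_0|,|b_0|<R_j/2$ and that (b) holds. Pick $a_\infty\in A$, $b_\infty\in B$ with $|a_\infty|,|b_\infty|>2R_j$; any path in $A$ from $a_0$ to $a_\infty$ must cross every sphere of radius $r\in (R_j/2,2R_j)$, so $A\cap \Omega_j\neq \emptyset$, and similarly $B\cap \Omega_j\neq \emptyset$. Since $\Omega_j\subset \{u>0\}$ is connected, both $A$ and $B$ coincide with the single component of $\{u>0\}$ containing $\Omega_j$, a contradiction. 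The main obstacle is (b), where connectedness has to be transferred from the smooth limit cone to the prelimit free boundary; the technical input there is the ``$L^2$-small $\Rightarrow$ classical and graphable'' content of Lemma \ref{lemma: extension}. The remaining steps are routine (the maximum principle and a path-crossing argument).
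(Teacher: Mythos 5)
Your route differs from the paper's in a genuinely useful way: you apply Lemma~\ref{lemma: extension} directly to the blow--down rescalings $u_j=u(R_j\cdot)/R_j$ on a fixed annulus to show each annular slice $\Omega_j$ is connected, and then close with a path--crossing argument and the maximum principle. The paper instead proves a stronger statement --- that $\{u>0\}\setminus B_{R_1}$ is connected for a single $R_1$ --- by invoking the global graph parametrization of Lemma~\ref{lm:FBreg} (which rests on Theorem~\ref{thm AA}, the rate theorem) and then applying Jordan--Brouwer separation slice by slice. Your version needs only the local $\varepsilon$-regularity content of Lemma~\ref{lemma: extension}, not the convergence rate, which is a cleaner logical dependence for this particular fact.

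That said, there is a real gap: you assert but do not prove that the link $\Sigma=\{U>0\}\cap\mathbb{S}^{d-1}$ is connected. Your parenthetical only observes that $|x\cdot e|$ --- the obvious disconnected--link configuration --- is excluded by Definition~\ref{def regular cone}; it does not show that this is the \emph{only} one-homogeneous solution with disconnected link, which is what you actually need. The paper supplies the missing step via a spherical Faber--Krahn argument: if $\Sigma$ had two components $\Sigma_1,\Sigma_2$, then $U|_{\Sigma_i}$ is a positive first Dirichlet eigenfunction of $\Delta_{\mathbb{S}^{d-1}}$ on each with eigenvalue $d-1$, and the Faber--Krahn inequality on the sphere forces $|\Sigma_i|\geq\tfrac12|\mathbb{S}^{d-1}|$ with equality only for a hemisphere; disjointness then forces both to be hemispheres, i.e.\ $U=|x\cdot e|$, contradicting regularity. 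Without this, your step~(b) is unsupported. (A secondary, smaller imprecision: passing from ``the free boundaries are close graphs'' to ``the positivity sets on the annulus are diffeomorphic'' needs a word about the bulk --- e.g.\ that $\{u_j>\delta\}$ agrees with $\{U>\delta\}$ on the annulus for $j$ large and the collar between them is a product --- but that is routine once the connectedness of $\Sigma$ is in hand.)
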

\begin{proof}
We start pointing out that $\{U > 0\}$ is connected, which by the homogeneity of $U$ is equivalent to the connectedness of $\{U > 0\} \cap \mathbb S^{d-1}$. This is well known but we sketch a proof: if this is not the case, consider two connected components $\Omega_1$ and $\Omega_2$, which are also cones, such that $\Omega_1 \cap \Omega_2 =\emptyset$ and let $U_i = U\chi_{U_i}$. Notice that $U_1$ and $U_2$ are non-degenerate solutions to the one phase problem. Since $\Omega_1$ and $\Omega_2$ are disjoint open cones, and $U_i$ is harmonic 1-homogeneous and positive in $\Omega_i$, the links $\Sigma_i = \mathbb{S}^{d-1}\cap \Omega_i$ must satisfy $\lambda_1(\Omega_i) = d-1$ for $i=1, 2$ (where $\lambda_1$ is the first Dirichlet eigenvalue). However, the spherical Faber-Krahn inequality implies that $|\Omega_i| \geq \frac{1}{2}|\mathbb S^{d-1}|$ with equality if and only if each $\Omega_i$ is a half-sphere. If both $\Omega_i$ are half-spheres then $U = |x\cdot e|$ for some $e\in \mathbb S^{d-1}$, which is not a regular cone. This yields the contradiction. Since $U$ is a regular cone, this argument also shows that $\{U=0\}  = \bigcup_{i=1}^k D_i$ where $D_i$ are cones with smooth, connected, and mutually disjoint links $C_i$ on $\mathbb{S}^{d-1}$.\\

On the other hand, thanks to Lemma \ref{lm:FBreg} there exists $R_1>0$ such that $\partial \{u > 0\}\backslash B_{R_1}(0)$ can be written as a smooth graph over $\partial \{U > 0\} \backslash B_{R_1}(0)$ as in \eqref{e:analyticparam}, with decay near $\infty$ as in \eqref{e:etaandvdecay}. Together, the structure of the zero set of $U$ and the previous observation imply that $\partial \{u > 0\}\backslash B_{R_1}(0) = \bigcup_{i=1}^k F_i$ where the $F_i$ are smooth, connected, and mutually disjoint hypersurfaces that can be seen individually as graphs over the corresponding $D_i \setminus B_{R_1}(0)$. Additionally, since $\chi_{\{u(R\cdot)>0\}} \to \chi_{\{U>0\}}$ on compacta as $R\to \infty$, we must have that ${\{u = 0\}}\backslash B_{R_1}(0)  = \bigcup_{i=1}^k E_i$ where $E_i$'s are closed connected sets lying at positive distance $\delta_0$ (we may need to take $R_1$ larger if necessary) and satisfying $\partial E_i \setminus B_{R_1}(0) =F_i$. Moreover, for any $R>R_1$ we have that $\frac{1}{R} E_i \cap \mathbb{S}^{d-1}$ are disjoint connected closed sets each with connected smooth boundary on $\mathbb{S}^{d-1}$ and, thus, in virtue of  Jordan's separation theorem  $\mathbb{S}^{d-1} \setminus \bigcup_{i=1} \frac{1}{R} E_i $ is connected in $\mathbb{S}^{d-1}$. The same argument allows us to conclude that the complement of tubular neighborhoods, i.e., $\mathbb{S}^{d-1} \setminus \bigcup_{i=1} \frac{1}{R} N_\delta(E_i) $ is also connected in $\mathbb{S}^{d-1}$ provided that $\delta$ is small enough.\\

By piecing out the previous considerations, we show now that $\{u > 0\}\backslash B_{R_1}(0)$ is connected. Indeed, let $x_1, x_2 \in \{u > 0\}\backslash B_{R_1}(0)$ with $|x_1| = r_1$ and $|x_2| = r_2$. If $r_1 = r_2$, the result follows by the connectedness of $\mathbb{S}^{d-1} \setminus \bigcup_{i=1} \frac{1}{r_1} E_i$. Otherwise, take $\delta \in (0, \delta_0)$ small enough such that the boundary of the tubular neighborhoods $\partial N_\delta( E_i)$ are graphs over the corresponding $D_i$ and such that $x_1, x_2 \notin \bigcup_{i=1}^k N_\delta( E_i) \setminus B_{R_1}(0)$. By connectedness of $\mathbb{S}^{d-1} \setminus \bigcup_{i=1}^k {\frac{1}{r_1}} N_\delta(E_i)$ we can take a path in this set connecting $x_1$ to a point $y$ on the boundary of $N_\delta( E_i)$ for some $i$. In particular, since $D_i$ is a connected cone in $\mathbb{R}^d$, we can find a path along $\partial N_\delta( E_i)$ joining $y$ to a point $z \in \{u > 0\}\cap \partial B_{r_2}$ which can be joined in turn to $x_2$ by connectedness of $\mathbb{S}^{d-1} \setminus \bigcup_{i=1} \frac{1}{r_2} E_i$.\\

Finally, we notice that since $u$ is harmonic in its positivity set, the maximum principle forces each connected component of $\{u>0\}$ to be unbounded, but since $\{u>0\}\setminus B_{R_1}(0)$ is connected it follows that $\{u>0\}$ must be connected.

\end{proof}

As a lat preparatory step, we require the following observation about the asymptotic behavior of the foliating solutions from \cite{DSJS}.

\begin{remark}\label{remark asympfoliation}
    As shown in Lemma \ref{lemma: extension}, in particular in equation \eqref{eq expansion eta}, the normal distance between the free boundary of a regular cone and the free boundary of a nearby solution is comparable to their vertical displacement. We can exploit this fact to obtain further information about the behavior of the foliating solutions from \cite{DSJS} -recalled in Theorem \ref{thm foliation}. More precisely, if $U$ is an entire minimizing cone with associated foliating solutions $\overline{U}$ and $\underline{U}$, the asymptotic expansions \eqref{eq:upperfol} and \eqref{eq:lowerfol} allows us to write, for some $r_1>0$,
     \begin{equation*}
          \partial \{\overline{U}_t > 0\}\backslash B_{r_1} = \{x - \nu(x)\overline{\eta}_t(x)\mid x\in \partial \{U > 0\}\},
     \end{equation*}
and
     \begin{equation*}
          \partial \{\underline{U}_t > 0\}\backslash B_{r_1} = \{x + \nu(x)\underline{\eta}_t(x)\mid x\in \partial \{U > 0\}\},
     \end{equation*}
 where $\overline{\eta}_t = \frac{\overline{\eta}(t\cdot)}{t}$, $\underline{\eta}_t =\frac{\underline{\eta}(t\cdot)}{t}$,
\refstepcounter{equation}\label{eq pm uperfoliation-num}
\begin{align}
    \overline{\eta}(r\omega) 
    &\simeq \overline{U}(r\omega) - U(r\omega) \notag \\
    &= \bar{a} r^{-\frac{(d-2)}{2}\pm\delta_1} \phi_1(\omega)
       + o\bigl(|x|^{-\frac{(d-2)}{2}\pm\delta_1}\bigr),
    \tag{\theequation~$\pm$}\label{eq pm uperfoliation}
\end{align}
and

\refstepcounter{equation}\label{eq pm lowerfoliation-num}
\begin{align}
    \underline{\eta}(r\omega)
    &\simeq  U(r\omega)-\underline{U}(r\omega) \notag \\
    &= \bar{b} r^{-\frac{(d-2)}{2}\pm\delta_1} \phi_1(\omega)
       + o\bigl(|x|^{-\frac{(d-2)}{2}\pm\delta_1}\bigr).
    \tag{\theequation~$\pm$}\label{eq pm lowerfoliation}
\end{align}
\end{remark}

\begin{proof}[Proof of Theorem \ref{thm main}]
We start noticing that thanks to Lemma \ref{lm:FBreg} $v = u-U$ satisfies $v=O(R^{1-\alpha_0})$ for some $\alpha_0\in (0,1)$ along with appropriate decay estimates for its first and second-order derivatives. 

    \medskip

\noindent {\it Step 1:} We want to show $v$ satisfies one of the following conditions

    \begin{equation}\label{eq:vtrsl}
        v=\phi_2(\omega) + o(1),\tag{a}
    \end{equation}
    \begin{equation}\label{eq:vfl}
        v=\phi_1(\omega) r^{-\frac{(d-2)}{2}+\delta_1}+o(r^{-\frac{(d-2)}{2}+\delta_1}),\tag{b}
    \end{equation}
    \begin{equation}\label{eq:vfl-}
        v=\phi_1(\omega) r^{-\frac{(d-2)}{2}-\delta_1}+o(r^{-\frac{(d-2)}{2}-\delta_1}),\tag{c}
    \end{equation}
    \begin{equation}\label{eq:vtrsl-}
        v=O(r^{-(d-2)}),\tag{d}
    \end{equation}
    where $\phi_1$ denotes the first eigenfunction of \eqref{eq:evonlink}, associated to $\lambda_1$, $\delta_1 = \sqrt{\left(\frac{d-2}{2} \right)^2 + \lambda_1 }$, and $\phi_2$ denotes any of the second eigenfunctions of \eqref{eq:evonlink} with eigenvalue $0$.\\

  Exploiting the decay properties of $v$, by Lemma \ref{lm:quadraticp} (with $\alpha=\alpha_0$), $v$ satisfies the equation \eqref{eq:PL} with the right hand side of the boundary condition $g_v =O(R^{-2\alpha_0})$, along with appropriate decays for its derivatives. By Lemma \ref{lemma part sol} (if necessary we may need to take the exponent $\beta$ to be slightly smaller than $2\alpha_0$ to avoid a discrete countable set and satisfy the assumption therein), we may construct a particular solution to
    \[ \left\{\begin{array}{ll}
       -\Delta v_p = 0  & \Omega \setminus B_{R_0} \\
       \partial_{\nu}v_p + Hv_p = g_v  & \partial\Omega \setminus B_{R_0},
    \end{array} \right. \]
    such that $v_p$ (and its derivatives) decays like $O(R^{1-2\alpha_0})$. It follows that $v_h := v-v_p$ satisfies
    \[ \left\{\begin{array}{ll}
       -\Delta v_h = 0  & \Omega \setminus B_{R_0} \\
       \partial_{\nu}v_h + Hv_h = 0  & \partial\Omega \setminus B_{R_0}.
    \end{array} \right. \]
    Since $v_h=v-v_p$ at least has the decay of $O(R^{1-\alpha_0})$, by Lemma \ref{l:fourierimprovement}, $v_h$ satisfies one of the above four conditions. Recall that $\Omega = \{U > 0\}$ and that $\Omega' = \{u > 0\}$.

    Now suppose $k\in \mathbb{N}$ is the largest integer so that $v=O(R^{1-k\alpha_0})$. If $1-k\alpha_0 \leq -(d-2)$, then $v$ satisfies \eqref{eq:vtrsl-} and we are done. Assume $1-k\alpha_0 > -(d-2)$. By the argument in the previous paragraph, we can construct a particular solution $v_p$ with decay $O(R^{1-2k\alpha_0})$, and $v_h = v-v_p$ satisfies one of the above four conditions. Either $v_h$ dominates $v_p$, in which case $v$ satisfies one of the above four conditions as well; or else $v_p$ dominates, or $v_p$ has the same rate of decay as $v_h$. In the latter case, we get that $v$ decays like $O(R^{1-2k\alpha_0})$. This is a contradiction since $k$ is assumed to be the largest integer so that $v=O(R^{1-k\alpha_0})$.\\

\medskip

In the remaining steps, our arguments will be based on comparing the global solution $u$ with the ``foliating'' rescalings $\overline{U}_{t} = \frac{\overline{U}(t\cdot)}{t}$ and $\underline{U}_{t} = \frac{\underline{U}(t\cdot)}{t}$ introduced in Theorem \ref{thm foliation}. We will exploit Remark \ref{remark asympfoliation} to see their free boundaries as graphs  over $\Omega\setminus B_r$ for some $r>0$ with decaying at infinity given by the expansions \eqref{eq pm uperfoliation} and \eqref{eq pm lowerfoliation}.

\medskip
\noindent {\it Step 2:} If \eqref{eq:vtrsl-} holds, then $u=U$.\\

Lemma \ref{lm:FBreg} implies the existence of $r_1>0$ such that  
\begin{equation}\label{eq decayfb}
    \partial \Omega'\backslash B_{r_1} = \{x + \nu(x)\eta(x)\mid x\in \partial \Omega \setminus B_{r_1}\}, \quad \mbox{$|\eta(x)|\leq C|x|^{\gamma}$,}
\end{equation} 
with $\gamma = -(d-2)$. On the other hand, from \eqref{eq quadratic form} we have that $\delta_1<\frac{d-2}{2}$, and thus $-\frac{(d-2)}{2}\pm\delta_1 <-(d-2)$. In particular, we have that $\eta$ decays faster than $\overline{\eta}_t$ and $\underline{\eta}_t$ for all $t>0$ regardless of the signs in \eqref{eq pm uperfoliation} and \eqref{eq pm lowerfoliation}. So, by combining these considerations with Theorem \ref{thm foliation}, we can find $t_0$ such that for $t \leq t_0$, $\{\overline{U}_t>0\} \supset \Omega'\supset \{\underline{U}_t>0\}$ and, furthermore, we can guarantee thanks to this nestedness and the asymptotic behavior of $u$ that for $t_0$ sufficiently small    $\overline{U}_t\geq u \geq \underline{U}_t$ if $t\in (0,t_0)$. Let us consider
\begin{equation}\label{eq uppert}
    t_1= \sup\{t>0: \overline{U}_t\geq u \quad \mbox{in $\R^d$}\}
\end{equation}
and
\begin{equation}\label{eq lowert}
    t_2= \sup\{t>0: u\geq \underline{U}_t \quad \mbox{in $\R^d$}\}.
\end{equation}
We claim that $t_1, t_2= \infty $. To see this, assume by contradiction that $t_1 <\infty$. Since at infinity $\partial \Omega'$ approaches  $\partial \Omega$  faster than $\partial \{ \overline{U}_{t_1}>0\}$ (seen as graphs), we have that $\overline{U}_{t_1}$ must touch $u$ from above either in $\Omega$ or at $\partial \Omega$. Notice that the former possibility is ruled out by the strong maximum principle together with the connectedness of $\Omega'$ (Lemma \ref{lemma connectedness}), otherwise, $u= \overline{U}_{t_1}$, which cannot happen because $u$ decays faster than $\overline{U}_{t_1}$. Similarly, the tangency at $\partial \Omega$ is ruled out by the Hopf boundary lemma if the touching point occurs at a regular point of $\partial \Omega'$ -noticing that by construction $\overline{U}_{t_1}\geq u$. It remains to consider the case when the point of tangency, specifically $x_0 \in \partial \Omega'$, is a singular free boundary point of $u$. In this scenario, we can derive a contradiction by blowing-up at $x_0$. Specifically, consider the rescaled functions
\[
u_r(x) = \frac{u(x_0 + r x)}{r} \quad \text{and} \quad w_r(x) = \frac{\overline{U}_{t_1}(x_0 + r x)}{r}.
\]
By definition of singular point, there exists a subsequence $\{r_j\}$ such that $u_{r_j} \to u_0$ as $j\to \infty$ with $u_0$ a non-degenerate weak solution homogeneous of degree $1$. Since the free boundary of $\overline{U}_{t_1}$ is analytic, we have that  $w_{r_j}\to (x\cdot e)_+$ for some $e \in \mathbb{S}^{d-1}$ as $j\to \infty$, implying by the ordering $w_r \geq u_r$ and the fact that $x_0$ is a singular point at the free boundary of $u$, that the positivity set of $u_0$ is strictly contained in the positivity set of $(x\cdot e)_+$. Denoting still by $u_0$ the restriction of $u_0$ to the sphere, we have that
    \begin{equation*}
        -\Delta_{\mathbb{S}^{d-1}} u_0 = (d-1)u_0 \quad \mbox{in $\{u_0>0\}$},      
    \end{equation*}
with $\{u_0>0\}\subsetneq \{x\cdot e >0\}$. This implies that the first Dirichlet eigenvalue for the Laplace-Beltrami operator on the sphere satisfies $\lambda_1(\{u_0>0\})=d-1 = \lambda_1(\{x\cdot e >0\})$ which contradicts the strict monotonicity of the first Dirichlet eigenvalue with respect to inclusions.\\

Analogous considerations show that $t_2=\infty$. Since $\lim_{t\to \infty}  \overline{U}_t = \lim_{t\to \infty}  \underline{U}_t =U$, we deduce that $u=U$.
\medskip

\noindent {\it Step 3:} If \eqref{eq:vfl} or \eqref{eq:vfl-} hold, then $u= \overline{U}_t$, $u= \underline{U}_t$ for some $t>0$, respectively.\\

We first claim that $u(x) \leq U(x)$ or $u(x) \geq U(x)$ for all $x \in \mathbb{R}^d$. To see this, we start by noticing that since $\phi_1$ is an eigenfunction corresponding to the first eigenvalue in \eqref{eq:evonlink}, it does not change sign. Thus, by our growth assumptions, there exists $R>0$ such that either $u(x) \leq U(x)$ or $u(x) \geq U(x)$ in $B_R^c$, with strict inequalities in the intersection of their positivity sets. On the other hand, our growth assumptions allow us to use Lemma~\ref{lm:FBreg} as in the previous case, and guarantee that \eqref{eq decayfb} holds with $\eta$ having the same sign as $\phi_1$. Together, the previous considerations imply that for some $e \in \mathbb{S}^{d-1}$ and for some $\delta_0>0$  $u(x+se) \leq U(x)$ or $u(x+se) \geq U(x)$ in $B_R^c$ if $s \in (0,\delta_0)$. Assuming the former for the sake of concreteness, notice that since $u(x+se) \leq U(x)$ for $x \in \partial B_R$, this implies that $\overline{U}_t \geq u(x+se)$ in $\overline{B_R}$ if $t>0$ is sufficiently small. Moreover, reasoning as in the previous step, we can increase the value of $t$ until $\overline{U}_t$ touches $u(x+se)$ for the same time in $\overline{B_R}$, which cannot occur on the boundary by $u(x+se) \leq U(x)$ for $x \in \partial B_R$ and cannot occur in the interior by the reasons discussed in the previous part. Thus, since $\overline{U}_t \to U$ as $t\to \infty$, we deduce that $u(x+se) \leq U(x)$ for $x \in \partial \R^d$, then sending $s$ to $0$  proves the claim.\\

Let us consider the case $u(x)\geq U(x)$; the other case follows by an analogous argument. Under this ordering property, we can apply \cite[Theorem~1.1-(vi)]{DSJS} to obtain the result. Nevertheless, for the sake of completeness, we report a proof. Recall that $\overline{U}$ exhibit the two possible asymptotic behaviors \eqref{eq pm uperfoliation}. Arguing as in the previous step, consider 
\begin{equation}\label{eq uppert2}
    t_1= \sup\{t>0: \overline{U}_t\geq u \quad \mbox{in $\R^d$}\}
\end{equation}
if \eqplus{eq pm uperfoliation-num} holds, or
\begin{equation}\label{eq lowert2}
    t_1= \inf\{t>0: u\geq \overline{U}_t \quad \mbox{in $\R^d$}\}.
\end{equation}
if \eqminus{eq pm uperfoliation-num} holds instead. In either situation, the growth assumtions on $u$ imply --exactly as before-- that $t_1 \in (0,\infty)$. In consequence, $\overline{U}_{t_1}$ must touch $u$ either from below or from above, which is impossible as we already discussed in the previous step.

\medskip

\noindent {\it Step 4:} We are left to consider if \eqref{eq:vtrsl} holds, i.e. if $v(rw) = \phi_2(\omega) + o(1)$. We note that $\phi_2(\omega)$ is of the form $c\partial_{e_i}U$ for some $i= 1,\ldots, d$ and $c\neq 0$. After a harmless rotation let $i = 1$ and we define a family of solutions $u_t(x) := u(x+te_1)$. We now want to show that there is a choice of $t\in \mathbb R$ such that $v_t(x):= u_t(x) - U(x)$ satisfies one of the conditions \eqref{eq:vfl}, \eqref{eq:vfl-}, \eqref{eq:vtrsl-}. 

We first note that for any $t\in \mathbb R$ we have $|u(x+te_1) - u(x)| \leq Ct$. In particular, for any $t$ we have $u_t$ blows down to $U$ and, perhaps enlarging $R_0$ depending on an upper bound for $t$, we have that $v_t = O(R^{1-\alpha_0})$ in $B_{R_0}^c$. Thus arguing as above we can ensure that $v_t$ satisfies one of the four alternatives spelled out in Step 1. 

Now assume that $|x| \equiv R \gg t$ and using the definition of $v$ and letting $\omega_t = \frac{x+te_1}{\|x+te_1\|}$ we have that $$\begin{aligned} v_t(x) =& v(x+te_1) + U(x+te_1) - U(x)\\ =& v(x+te_1) - t\partial_{e_1}U(x+te_1) + \frac{1}{2}t^2\partial_{e_1e_1}U(z)\\
=& \phi_2(\omega) -\frac{t}{c}\phi_2(\omega) + o(1) + O(\frac{t^2}{R}),\end{aligned}$$ where we have used the mean value theorem for $\partial_{e_1}U$ and the expansion for $v$ in \eqref{eq:vtrsl}. So, picking $t= c$ we have that $v_t(rw) = o(1) + O(R^{-1})$ and thus $v_t$ satisfies one of the conditions \eqref{eq:vfl}, \eqref{eq:vfl-}, \eqref{eq:vtrsl-}. 

By the analysis above either $u_t(x) = U(x)$ (see Step 2), or $u_t$ is a leaf of the foliation, i.e. $u_t(x) = \overline{U}_s$ or $u_t = \underline{U}_s(x)$ (see Step 3). This implies of course that $u(x) = U(x-ce_1)$ or $u(x) = \overline{U}_s(x-ce_1)$ or $u(x) = \underline{U}_s(x-ce_1)$. This completes our proof. 
\end{proof}

\phantomsection

\section{Appendix}\label{sec: appendix}

In this appendix, we recall some linear estimates used in the body of our work. We start with the following Poincar\'e-type inequality similar in spirit to \cite[(A.5)]{MaggiNovack}.\\

We recall the notation $A_{s,r} := B_r \setminus \overline{B_{s}}$ for $0<s<r$ and $A_r := A_{r}$. In addition, we recall that $O$ is an open cone if $O$ is open and $\lambda O =O$ for any $\lambda >0$.
\begin{lemma}
  Let $O$ be an open cone. There exists a universal constant $C>0$ such that 
   \begin{equation}\label{e:MNpoinc}
    \int_{A_4 \cap O} f^2 \, dx \leq C\left(\int_{A_4 \cap O} (x\cdot \nabla f - f)^2\, dx  + \int_{A_2 \cap O} f^2\, dx\right), \,\,\, \
\end{equation} 
for all $f\in W^{1,2}(A_4 \cap O)$.  
\end{lemma}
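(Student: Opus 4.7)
The plan is to exploit the pointwise identity
\[
x\cdot \nabla f - f \;=\; r^{2}\,\partial_{r}\!\bigl(f/r\bigr),
\]
valid in polar coordinates $(r,\omega)=(|x|,x/|x|)$. This rewrites the hypothesis as an $L^2$-smallness statement on the radial derivative of $f/r$, so the desired inequality becomes a weighted one-dimensional Poincar\'e estimate along each ray $\{r\omega : r>0\}$ with $\omega \in O\cap \mathbb{S}^{d-1}$. The conical structure of $O$ guarantees that every such ray (truncated to $r\in[1/4,4]$) lies in $A_4\cap O$, so no tangential derivatives are ever needed.

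Concretely, working with the ACL-representative of $f\in W^{1,2}(A_4\cap O)$, for a.e.\ $\omega$ and every $r\in [1/4,4]$, $r_0\in [1/2,2]$ one has
\[
f(r\omega) \;=\; \tfrac{r}{r_0}\,f(r_0\omega)\;+\; r\!\int_{r_0}^{r}\frac{(x\cdot\nabla f - f)(s\omega)}{s^{2}}\,ds.
\]
Squaring, applying Cauchy--Schwarz with the splitting $s^{-2}=s^{-(d+3)/2}\cdot s^{(d-1)/2}$, and using that $s^{-(d+3)}$ is bounded on $[1/4,4]$ yields
\[
f(r\omega)^{2}\;\le\; C\Bigl(r/r_0\Bigr)^{2}f(r_0\omega)^{2}\;+\;C_d\,r^{2}\!\int_{1/4}^{4}(x\cdot\nabla f - f)^{2}(s\omega)\,s^{d-1}\,ds.
\]

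Next I would average the left-hand side against $r_0^{d-1}\,dr_0$ over $[1/2,2]$ — this is where the $A_2\cap O$-term appears in the natural spherical measure — then multiply by $r^{d-1}$ and integrate over $r\in[1/4,4]$. Finally integrating over $\omega\in O\cap \mathbb{S}^{d-1}$ and using the identity $\int_{A_4\cap O}|g|^{2}\,dx = \int_{O\cap\mathbb{S}^{d-1}}\int_{1/4}^{4}g(r\omega)^{2}r^{d-1}\,dr\,d\omega$ converts both terms into the asserted volume integrals, giving \eqref{e:MNpoinc} with a purely dimensional constant.

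There is no real obstacle here: the argument is an elementary radial fundamental-theorem-of-calculus estimate, made possible precisely because the controlled quantity $x\cdot\nabla f - f$ is, up to a factor of $r^{2}$, the radial derivative of $f/r$. One could alternatively run a contradiction--compactness argument, extracting a limit satisfying $x\cdot\nabla f = f$ (hence $1$-homogeneous) and vanishing on $A_2\cap O$, which forces $f\equiv 0$ on $A_4\cap O$ by homogeneity; but the direct proof above is quantitative and avoids any need to control $\nabla f$ tangentially.
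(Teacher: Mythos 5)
Your proposal is correct and takes essentially the same route as the paper: both exploit the identity $x\cdot\nabla f - f = r^{2}\partial_{r}(f/r)$ (equivalently, the derivative of $r\mapsto f(rx)/r$) and integrate along radial lines via the fundamental theorem of calculus, then convert back to volume integrals using the conical structure of $O$. The only cosmetic difference is that the paper runs the radial FTC in two passes (one scaling inward from $A_{1,4}$, one outward from $A_{1/4,1}$, each landing in $A_2$), whereas you average the basepoint $r_0$ over $[1/2,2]$ and handle the whole annulus in a single pass.
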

\begin{proof}
    By an approximation argument, we can assume $f \in C^1(B_4\cap O)$. If we apply the fundamental theorem of calculus to $f_r(x) := \frac{f(rx)}{r}$, we get
    \begin{equation*}
        f(x) = \int_{\frac{1}{2}}^{1} \frac{rx\cdot \nabla f(rx)- f(rx)}{r^2}dr + 2 f\big(\frac{x}{2}\Big).
    \end{equation*}
Taking squares and integrating with respect to $x$ in $A_4 \cap O$ on both sides of the previous expression yields
    \begin{eqnarray*}
        \int_{A_{1,4}\cap O} f^2 \, dx&\leq& 2^5 \int_{A_{1,4}\cap O} \int_{\frac{1}{2}}^{1} (rx\cdot \nabla f(rx)- f(rx))^2drdx\\
        &&+ 2^3  \int_{A_{1,4}\cap O} f\big(\frac{x}{2}\Big)^2 dx\\
        &\leq& C_d \int_{A_{\frac{1}{2}, 4}\cap O} (x\cdot \nabla f(x)- f(x))^2dx +C_d  \int_{A_2 \cap O}f^2dx,
    \end{eqnarray*}
    where in the last inequality we used Fubini and the change of variables $z=rx$ and $y=\frac{1}{2}x$, respectively. 

If we repeat the same argument for $x$ in $A_{\frac{1}{2},1}\cap O$ integrating this time $f_r(x)$ between $r=1$ and $r=2$, we obtain the complementary inequality
 \begin{eqnarray*}
        \int_{A_{\frac{1}{4},1}\cap O} f^2 \, dx \leq C_d \int_{A_{\frac{1}{4},2}\cap O} (x\cdot \nabla f(x)- f(x))^2dx +C_d  \int_{A_2 \cap O}f^2dx,
    \end{eqnarray*}
    finishing the proof.
\end{proof}

We recall now some linear estimates for boundary value problems with Robin boundary condition.
\begin{lemma}\label{lemma lib}
Let $A= B_4\setminus B_1$, $\Omega$ an open set with $\partial \Omega\setminus\{0\}$ smooth, and let $v$ be a distributional solution of
    \begin{equation}\label{eq: oblique}
    \left\{\begin{array}{ll}
\Delta v = f & \text{ in } \Omega \cap A \\
\partial_{\nu} v + h v = g & \text { on } \partial\Omega \cap A,
\end{array}
\right.
\end{equation} 
where $f$, $h$, and $g$ are continuous bounded functions and $\nu$ is the inner unit normal to $\Omega$. Then,
\begin{equation}\label{eq oblique estimate}
    \Vert v\Vert_{L^\infty(A'\cap \Omega)}\leq C( \Vert v\Vert_{L^2(A\cap \Omega)}+\Vert f\Vert_{L^\infty(A\cap \Omega)}+\Vert g\Vert_{L^\infty(A\cap \partial \Omega)})
\end{equation}
where $A'= B_3\setminus B_2$ and $C$ only depends on $\Vert h\Vert_{L^\infty(\partial \Omega \cap A)}$.
\end{lemma}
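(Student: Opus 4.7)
The estimate is a standard Moser iteration for elliptic equations with Robin boundary conditions, and in fact a significantly simpler version of the argument already carried out in Lemma \ref{lemmaL2Linfty}. The plan is to obtain a Caccioppoli-type inequality by testing the equation against $\varphi^2 v_+^p$ for $p\geq 1$ and a cutoff $\varphi$, iterate via the Sobolev inequality on the regular domain $A\cap\Omega$, and conclude the $L^\infty$ bound on the smaller annulus $A'$. The presence of the data $f$ and $g$ is harmless: after testing, they produce lower order terms that get absorbed by the $\|f\|_\infty$ and $\|g\|_\infty$ contributions on the right-hand side of \eqref{eq oblique estimate}. The argument is carried out for $v_+$, then repeated for $v_-$ (noting that $-v$ satisfies an equation of the same form with data $-f,-g$).

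More concretely, given $2\leq r<s\leq 4$ and $\varphi\in C_c^\infty(A_{s})$ with $\varphi\equiv 1$ on $A_{r}$ and $|\nabla\varphi|\leq C/(s-r)$, integration by parts gives
\begin{equation*}
p\int_{\Omega\cap A_s} \varphi^2 v_+^{p-1}|\nabla v_+|^2 + 2\int_{\Omega\cap A_s}\varphi v_+^p \nabla\varphi\cdot\nabla v_+ = -\int_{\Omega\cap A_s} f\varphi^2 v_+^p + \int_{\partial\Omega\cap A_s}(hv-g)\varphi^2 v_+^p.
\end{equation*}
The boundary integral has to be re-expressed as a bulk integral since we cannot afford a direct trace inequality with the sharp constant. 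To do this we fix a smooth vector field $X$ on the closure of $A$ with $X\cdot\nu=1$ on $\partial\Omega\cap A$, which exists because $\partial\Omega\setminus\{0\}$ is smooth and $A$ stays away from the origin. Applying the divergence theorem to $\varphi^2 v_+^{p+1} X$ and to $\varphi^2 v_+^{p} X$ transfers both $\int_{\partial\Omega}\varphi^2 v_+^{p+1}$ and $\int_{\partial\Omega}\varphi^2 v_+^{p}$ to controllable bulk integrals of the form $\int_{\Omega}\varphi^2 v_+^{p}|\nabla v_+|$, $\int_\Omega\varphi|\nabla\varphi|v_+^{p+1}$, etc., exactly as in equations \eqref{eq boundaryp}--\eqref{eq boundarygrad}. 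Young's inequality then absorbs the gradient terms into the left-hand side, yielding
\begin{equation*}
\int_{\Omega\cap A_r}\left|\nabla\bigl(v_+^{(p+1)/2}\bigr)\right|^2 \leq \frac{C\,p^2}{(s-r)^2}\int_{\Omega\cap A_s}v_+^{p+1} + C\,p\bigl(\|f\|_\infty+\|g\|_\infty\bigr)\int_{\Omega\cap A_s}\bigl(1+v_+^{p+1}\bigr).
\end{equation*}

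Applying the Sobolev inequality on $A\cap\Omega$ (whose boundary is smooth away from the origin and thus admits uniform extension/Sobolev constants on the fixed annulus) with exponent $\kappa\in(1,\min\{d/(d-2),2\})$ produces the reverse Hölder inequality
\begin{equation*}
\|v_+\|_{L^{(p+1)\kappa}(\Omega\cap A_r)} \leq \Big(\tfrac{C\,p}{s-r}\Big)^{2/(p+1)}\bigl(\|v_+\|_{L^{p+1}(\Omega\cap A_s)} + \|f\|_\infty^{1/(p+1)}+\|g\|_\infty^{1/(p+1)}\bigr).
\end{equation*}
A standard iteration along a geometric sequence of exponents $\xi_i=2\kappa^{i-1}$ and telescoping radii $r_i=2+\sum_{j<i}2^{-j}$, exactly as in the last step of Lemma \ref{lemmaL2Linfty}, then yields
\begin{equation*}
\|v_+\|_{L^\infty(A'\cap\Omega)}\leq C\bigl(\|v_+\|_{L^2(A\cap\Omega)}+\|f\|_\infty+\|g\|_\infty\bigr),
\end{equation*}
and repeating for $v_-$ gives \eqref{eq oblique estimate}. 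The main (and only) subtle point is the treatment of the boundary integral; once the divergence-theorem trick with the vector field $X$ is in place, everything else is a routine Moser iteration, and in particular is strictly easier than the argument of Lemma \ref{lemmaL2Linfty} since here the boundary condition is linear in $v$.
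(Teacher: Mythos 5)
Your plan is correct and follows essentially the same route as the paper: a Moser iteration with a cut-off, treating the Robin boundary terms by transferring them to bulk integrals and iterating. The only difference is one of presentation: the paper simply cites Lieberman's Theorem 5.31 and Corollary 5.32 and remarks that the argument localizes with a cut-off, whereas you sketch the iteration explicitly and make the boundary-to-bulk transfer concrete via a smooth vector field $X$ with $X\cdot\nu=1$, which indeed mirrors the use of $\nabla u$ in Lemma \ref{lemmaL2Linfty}. One cosmetic remark: the displayed reverse-H\"older inequality with $\|f\|_\infty^{1/(p+1)}+\|g\|_\infty^{1/(p+1)}$ appearing additively is not quite the form one wants to iterate; the cleaner standard bookkeeping is to run the iteration on $\bar v := v_+ + \|f\|_\infty + \|g\|_\infty$ so that the inhomogeneity is absorbed into $\bar v^{p+1}$ before iterating, but this is a routine adjustment and does not affect the validity of the argument.
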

\begin{proof}
    The proof essentially corresponds to Theorem 5.31 (for subsolutions) and Corollary 5.32 (for supersolutions) in \cite{Lieberman}. In these statements $\Omega$ is assumed to be a bounded domain and the boundary condition is imposed on the entire boundary $\partial \Omega$, and the proof is done through a standard Moser iteration. We just remark that the same iteration yields \eqref{eq oblique estimate} provided that one localizes the argument by adding a cut-off $\eta^2$ with $\eta\in C_c^\infty(A)$ with $\eta =1$ in $A'$.
\end{proof}

Rescaling, we obtain the following decay estimate for solutions of the inhomogeneous linearized operator \eqref{eq part sol}.

\begin{corollary}\label{cor regdecay}
Let $R_0>0$, $\Omega$ be a cone with $\partial \Omega \setminus \{0\}$ smooth, let $h, G: \partial \Omega\setminus B_{R_0} \to \R$ be continuous functions with $h$ homogeneous of degree $-1$, and let $u$ be a distributional solution to
\begin{equation}\label{eq part solap}
    \begin{cases}
        \Delta v = 0, \quad \Omega \setminus B_{R_0},\\
       \partial_\nu v+ hv = G, \quad \partial \Omega\setminus B_{R_0}.
    \end{cases}
\end{equation}
Set $\Sigma = \Omega \cap \mathbb{S}^{d-1}$. If, in addition, $|G(x)|\leq C|x|^{-\beta}$ and $\Vert v(r \cdot)\Vert_{L^2(\Sigma)}\leq C r^{1-\beta}$ for $r\geq R_0$ and some $\beta>0$, then $|v(x)|\leq C|x|^{1-\beta}$ for $|x| \geq 2R_0$, with $C$ depending on $R_0$ and $\Vert h\Vert_{L^\infty(\mathbb{S}^{d-1})}$.
\end{corollary}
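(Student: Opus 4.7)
The plan is to run a standard rescaling argument that reduces the desired decay estimate at each scale $R \geq R_0$ to the local $L^2$-to-$L^\infty$ estimate provided by Lemma \ref{lemma lib} on the fixed annulus $A = B_4 \setminus B_1$. For a fixed $R \geq R_0$, I would set $v_R(y) := v(Ry)$ on $A$. Because $\Omega$ is a cone, the dilation $y \mapsto Ry$ preserves $\partial\Omega$ together with its inner unit normal $\nu$; combined with the $(-1)$-homogeneity of $h$ (so that $h(Ry) = R^{-1}h(y)$), a direct computation shows that $v_R$ satisfies
\begin{equation*}
\begin{cases}
\Delta v_R = 0 & \text{in } \Omega \cap A, \\
\partial_\nu v_R + h\, v_R = R\, G(R\,\cdot) & \text{on } \partial\Omega \cap A.
\end{cases}
\end{equation*}
By the pointwise decay of $G$, the new boundary data satisfies $\|R\, G(R\,\cdot)\|_{L^\infty(A\cap\partial\Omega)} \leq C R^{1-\beta}$.

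Next, from the spherical $L^2$-bound on $v$ and polar coordinates, one has
\begin{equation*}
\|v_R\|_{L^2(A\cap\Omega)}^2 = \int_1^4 s^{d-1} \|v(Rs\,\cdot)\|_{L^2(\Sigma)}^2\, ds \leq C R^{2(1-\beta)}.
\end{equation*}
Applying Lemma \ref{lemma lib} to $v_R$ then yields $\|v_R\|_{L^\infty(A'\cap\Omega)} \leq C R^{1-\beta}$ on $A' = B_3 \setminus B_2$, with the constant independent of $R$ because $\|h\|_{L^\infty(A\cap\partial\Omega)} = \|h\|_{L^\infty(\mathbb{S}^{d-1})}$ by homogeneity. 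Unwinding the rescaling gives $|v(x)| \leq C R^{1-\beta} \leq C|x|^{1-\beta}$ for every $x$ with $|x| \in (2R, 3R)$. Letting $R$ sweep continuously over $[R_0,\infty)$, the resulting shells cover $\{|x| \geq 2R_0\}$ with bounded overlap, and the $R_0$-dependence of the final constant absorbs the bounds at the innermost scale.

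There is no real obstacle here; the proof is essentially routine. The only step that deserves care is verifying the scaling of the Robin boundary condition as stated above. The clean form depends on two ingredients aligning: first, the fact that $\Omega$ is a cone, so that $\partial\Omega$ and its normal vector are invariant under $y \mapsto Ry$; and second, the fact that $h$ is exactly $(-1)$-homogeneous, which is precisely the scaling that matches the first-order Robin term $\partial_\nu v_R$ (which picks up a single factor of $R$ under dilation) against the zeroth-order term $h\, v_R$ after multiplying through by $R$.
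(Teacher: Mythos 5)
Your proof is correct and takes essentially the same route as the paper's: both rescale by the radial parameter (the paper uses $v_r(x)=v(\tfrac{r}{R_0}x)$, you use $v_R(y)=v(Ry)$, a cosmetic difference), exploit the cone structure and the $(-1)$-homogeneity of $h$ to keep the Robin condition invariant, and then invoke Lemma \ref{lemma lib} on the fixed annulus to convert the $L^2$ decay to $L^\infty$ decay scale by scale. The only tiny slip is that $\|h\|_{L^\infty(A\cap\partial\Omega)}=\|h\|_{L^\infty(\mathbb{S}^{d-1})}$ should be an inequality $\leq$ (since $|x|\geq 1$ on $A$), but this is exactly the bound needed and does not affect the argument.
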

\begin{proof}
  Given $r\geq R_0$ consider $v_r(x) = v\Big( \frac{r}{R_0} x\Big)$ and $G_r(x) = G\Big( \frac{r}{R_0} x\Big)$. Since $\Omega$ is a cone and $h$ is homogeneous of degree  $-1$, we can rewrite \eqref{eq part solap} as
\begin{equation}\label{eq part solap}
    \begin{cases}
        \Delta v_r = 0, \quad \Omega \setminus B_{1},\\
       \partial_\nu v_r+ hv = \frac{r}{R_0}G_r , \quad \partial \Omega\setminus B_{1}.
    \end{cases}
\end{equation}
So, thanks to \eqref{eq oblique estimate}, we deduce that 
\begin{eqnarray*}
    \Vert v_r\Vert_{L^\infty(\Omega \cap A')}&\leq& C( \Vert v_r\Vert_{L^2(\Omega \setminus B_1 )}+\frac{r}{R_0}\Vert G_r\Vert_{L^\infty(\partial \Omega \setminus B_1 )})\\
    &\leq& C(R_0)( r^{1-\beta}+r^{1-\beta})\\
    &\leq& C(R_0)r^{1-\beta}.
\end{eqnarray*}
\end{proof}

\end{document}